\theoremstyle{definition}
\newtheorem{thm}{Theorem}[section]
\newtheorem*{thm*}{Theorem}
\newtheorem{lemma}[thm]{Lemma}
\newtheorem{lem}[thm]{Lemma}
\newtheorem{defn}[thm]{Definition}
\newtheorem{claim}[thm]{Claim}
\newtheorem{prop}[thm]{Proposition}
\newtheorem{cor}[thm]{Corollary}
\newtheorem{remark}[thm]{Remark}
\newtheorem{question}[thm]{Question}
\newtheorem{ex}[thm]{Example}
\newtheorem{conj}[thm]{Conjecture}
\renewcommand{\subset}{\subseteq}
\newcommand\force{\Vdash}
\newcommand\R{\mathbb{R}}
\newcommand\N{\mathbb{N}}
\newcommand\Q{\mathbb{Q}}
\renewcommand\P{\mathbb{P}}
\renewcommand\L{\mathcal{L}}
\newcommand\add{\mathbf{add}(\mathcal{B})}
\DeclareMathOperator{\dom}{dom}
\newcommand{\set}[2]{ \left\{ #1 :\, #2 \right\} }
\newcommand{\seqq}[2]{ \left\langle #1 :\, #2\right\rangle }
\title[Classifying invariants for $E_1$: A tail of a generic real]{Classifying invariants for $E_1$: \\A tail of a generic real}
\author[Assaf Shani]{Assaf Shani}
\address{Department of Mathematics, Harvard University, Cambridge, MA 02138, USA}
\email{shani@math.harvard.edu}
\date{\today}
\keywords{Complete classification,
Classifying invariants,
Borel equivalence relations,
Borel reducibility,
Intersection models,
Turbulence, 
Hypersmooth equivalence relations,
The bounding number.
}
\subjclass[2010]{Primary: 03E15, 03E25, 03E17, 03E47, 03E75.}
\begin{document}
\maketitle

\begin{abstract}
Let $E$ be an analytic equivalence relation on a Polish space. We introduce a framework for studying the possible ``reasonable'' complete classifications and the complexity of possible classifying invariants for $E$, such that: (1) the standard results and intuitions regarding classifications by countable structures are preserved in this framework; (2) this framework respects Borel reducibility; (3) this framework allows for a precise study of the possible invariants of certain equivalence relations which are not classifiable by countable structures, such as $E_1$.

In this framework we show that $E_1$ can be classified, with classifying invariants which are $\kappa$-sequences of $E_0$-classes where $\kappa=\mathfrak{b}$, and it cannot be classified in such a manner if $\kappa<\add$.

These results depend on analyzing the following sub-model of a Cohen real extension, introduced in \cite{Kanovei-Sabok-Zapletal-2013} and \cite{Larson_Zapletal_2020}. Let $\seqq{c_n}{n<\omega}$ be a generic sequence of Cohen reals, and define the tail intersection model
\begin{equation*}
    M=\bigcap_{n<\omega}V[\seqq{c_m}{m\geq n}].
\end{equation*}
An analysis of reals in $M$ will provide lower bounds for the possible invariants for $E_1$.

We also extend the characterization of turbulence from \cite{Larson_Zapletal_2020} in terms of intersection models.
\end{abstract}

\section{Introduction}\label{section: introduction}
Let $E$ be an equivalence relation on $X$. A \textbf{complete classification} of $E$ is a map $c\colon X\to I$ satisfying
\begin{equation*}
    x\mathrel{E}y\iff c(x)=c(y).
\end{equation*}
The members of $I$ are said to be \textbf{classifying invariants} for $E$.
Given an equivalence relation $E$, one would like to classify it using the simplest possible invariants. 

What constitutes a ``reasonable'' classification depends on the field and objects of study.
One famous example is the classification of compact orientable surfaces, up to homeomorphism, by their genus. The reader is referred to \cite[Preface]{Hjo00} for more examples and a thorough discussion.

We view the following requirement as a key property of a ``reasonable'' classification:
\begin{center}
$(\star)$\,    given $x\in X$, its invariant $c(x)$ is simple to define, and compute, using $x$.    
\end{center}
This should exclude the following two, not useful, ``bad'', complete classifications:
\begin{itemize}
    \item Appealing to the axiom of choice, there is a map choosing one element out of each equivalence class. This choice map is a complete classification.
    \item The map sending $x$ to its equivalence class $[x]_E=\set{y\in X}{x\mathrel{E}y}$, is a complete classification. 
\end{itemize}
In the first example, the map is not definable in any reasonable way. In particular, the invariant of a given object cannot be computed from it.
In the second, the issue is that the classifying objects are not simple to describe.
For example, given a compact orientable surface, one cannot quite simply \textit{describe all} the surfaces which are homeomorphic to it (this is an enormous collection of objects), but its genus, a natural number, is simple to describe and can be directly computed.

Our focus here is on equivalence relations defined on Polish spaces. This generally captures the situation when studying mathematical structures with some inherent separability assumption (see \cite{Kechris-DST-1995, Kechris_1997-classification-problems}).
Let $E$ and $F$ be equivalence relations on Polish spaces $X$ and $Y$ respectively. A map $f\colon X\to Y$ is a \textbf{reduction} of $E$ to $F$ if 
\begin{center}
    $x\mathrel{E}x'\iff f(x)\mathrel{F}f(x')$, for any $x,x'\in X$,
\end{center}
that is, $f$ reduces to problem determining $E$-relation to that of $F$-relation.
If $f\colon X\to Y$ is a reduction from $E$ to $F$ and $c\colon Y\to I$ is a complete classification of $F$, then the composition $c\circ f\colon X\to I$ is a complete classification of $E$, using the same set of invariants. So if there is a ``sufficiently definable'' reduction $f$ from $E$ to $F$, the invariants necessary to classify $E$ are no more complicated than those necessary to classify $F$.

$E$ is \textbf{Borel reducible} to $F$, denoted $E\leq_B F$, if there is a Borel map $f\colon X\to Y$ reducing $E$ to $F$. 
Borel reducibility is the most common notion for comparing the complexity of equivalence relations on Polish spaces, especially when the equivalence relations are Borel or analytic. In particular, Borel reducibility respects the intuitive idea of complete classification.

An equivalence relation $E$ on a Polish space $X$ is said to be \textbf{concretely classifiable} (or \textbf{smooth}) if there is a Polish space $Y$ and a Borel map $c\colon X\to Y$ which is a complete classification of $E$; equivalently, if $E$ is Borel reducible to $=_\R$, the equality relation on the reals. That is, $E$ admits real numbers as classifying invariants. (See \cite[Section 5.4]{Gao09}.)

Recall the equivalence relation $E_0$ on $2^\omega$, identifying two binary sequences $x,y\in 2^\omega$ if $\exists n\forall m\geq n(x(m)=y(m))$.
$E_0$ is the canonical obstruction for concrete classifications, according to the dichotomy theorem of Harrington, Kechris, and Louveau \cite{Harrington-Kechris-Louvaeu-E0-1990}: for any Borel equivalence relation $E$, either $E$ is concretely classifiable or $E_0\leq_B E$.

Hjorth and Kechris \cite{Hjorth-Kechris-Ulm-1995} extended the notion of concrete classifications to Ulm classifications, where the classifying invariants are in $2^{<\omega_1}$, countable subsets of $\omega_1$. They also extended the Harrington-Kechris-Louveau dichotomy: for any analytic equivalence relation $E$, either $E$ is Ulm classifiable or $E_0\leq_B E$ (assuming the existence of sharps for reals).

Given a countable first order language $\L$, let $X_\L$ be the Polish space of all countable $\L$-structures on the set $\omega=\{0,1,2,...\}$, and let $\cong_\L$ be the isomorphism equivalence relation on $X_\L$ (see \cite[Chapter 11]{Gao09}). 
An equivalence relation $E$ on a Polish space $X$ is \textbf{classifiable by countable structures} if it is Borel reducible to $\cong_\L$ for some $\L$ (see \cite{HK96, Kechris_1997-classification-problems, Hjo00}).
In this case, the $E$-classes can be classified using countable structures, such as countable groups or graphs, up to isomorphism. Moreover, for the isomorphism relation, the Scott analysis provides a complete classification with hereditarily countable sets as classifying invariants (see \cite[Chapter 12]{Gao09}).

For an equivalence relation $E$ on $X$, its {\bf Friedman-Stanley jump}, $E^+$, is defined on $X^\N$ by $x\mathrel{E^+}y\iff \forall n\exists m (x(n)\mathrel{E}y(m))$ and $\forall n\exists m(y(n)\mathrel{E}x(m))$. For example, the map sending $x\in\R^\N$ to $\set{x(n)}{n\in\N}$ is a complete classification of $=_\R^+$. An equivalence relation $E$ is considered ``classifiable by countable sets of reals'' if it is Borel reducible to $=_\R^+$. Similarly, $E$ is considered ``classifiable by hereditarily countable sets of sets of reals'' if it is Borel reducible to $(=_\R^{+})^+$, and so on. This approach is taken in \cite[Introduction (E)]{HK96}.

Given a continuous action $a\colon G\curvearrowright X$ of a Polish group $G$ on a Polish space $X$, let $E_a$ be the induced {\bf orbit equivalence relation}, $x\mathrel{E_a}y\iff \exists g\in G (g\cdot x=y)$.
Hjorth's turbulence property provides a condition on the action so that $E_a$ is not classifiable by countable structures (see \cite{Hjo00,Kechris_1997-classification-problems}).
Among orbit equivalence relations, Hjorth \cite{Hjorth-turb-dichotomy-2002} showed that turbulence is the canonical obstruction: for a Borel orbit equivalence relation $E_a$, either $E_a$ is classifiable by countable structures or there is a turbulent action $a'$ such that $E_{a'}\leq_B E_a$. 

There are interesting non-orbit Borel equivalence relations. These are referred to as ``the dark side'' in \cite[Chapter 8]{Hjo00}.
The equivalence relation $E_1$ is defined on $\mathbb{R}^\N$ by $x\mathrel{E_1}y\iff \exists n \forall m>n (x(m)=y(m))$. Kechris and Louveau \cite{Kechris-Louveau-1997}, extending \cite{Kechris-ctbl-sections-1992}, proved that $E_1$ is not Borel reducible to any orbit equivalence relation induced by a Polish group action. (In particular, $E_1$ is not classifiable by countable structures.)

Some natural classification problems in mathematics lie in ``the dark side''.
Solecki \cite{Solecki-2002-composants} showed that for a hereditarily indecomposable continuum, the equivalence relation partitioning it into its composants is Borel bireducible with $E_1$.
Thomas showed that $E_1$ is Borel bireducible with the quasi-equality relation on finitely generated groups \cite[Theorem 5.6]{Thomas-2008-quasi-iso}, and that the equivalence relation of virtual isomorphism on finitely generated groups, $\approx_{\mathrm{VI}}$, is Borel reducible to $E_1^+$ \cite[Theorem 6.8]{Thomas-2008-quasi-iso}. 

This paper suggests a framework of \textbf{generically absolute classifications}, Definition~\ref{defn;generic-classification}, in which equivalence relations such as $E_1$ and $E_1^+$ admit ``reasonable classifying invariants''.



\subsection{Abstract complete classification}\label{sec;complete-class}
Let $E$ be an analytic equivalence relation on a Polish space $X$. (More generally, we just need that $E$ has an absolute definition.) 
We consider the following three conditions as the key properties making a complete classification $c$ ``reasonable'', ensuring the informal requirement $(\star)$ above.
A good example to keep in mind is $=_{\mathbb{R}}^+$, with the ``reasonable'' classifying map $\mathbb{R}^\N\to\mathcal{P}_{\aleph_0}(\mathbb{R})$ defined by $x\mapsto\set{x(i)}{i\in\N}$, and the ``bad'' classifying map $x\mapsto [x]_{=^+}$.
\begin{enumerate}
    \item{[Definability]} There is a set theoretic formula $\psi$ and a parameter $b$ such that $c(x)=A\iff \psi(x,A,b)$.
    \item{[Locality]} If $V\subset W$ are ZF models, $x,A\in V$ and $\psi^V(x,A,b)$, then $\psi^W(x,A,b)$.
    \item{[Absoluteness]} If $W$ is a ZF extension of $V$ then $c$ (defined using $\psi$) is a complete classification of $E$ in $W$ as well.
\end{enumerate}
In this case, say that $c$ is an \textbf{absolute complete classification}.
We consider Locality to be the most fundamental necessary property. It tells us that given $x$ one can actually calculate/construct the invariant $c(x)$ in a concrete way. For example, it prohibits the classification $x\mapsto[x]_E$, when $E$ is a not-countable Borel equivalence relation.

These three properties seem to capture the key features of classification by countable structures.
\begin{ex}
For an isomorphism relation, the Scott analysis satisfies these properties. This is precisely the point of view taken in \cite{Friedman2000} and \cite{Ulrich-Rast-Laskowski-2017}.
\end{ex}
\begin{ex}
The Ulm-type classifications of Hjorth and Kechris \cite{Hjorth-Kechris-Ulm-1995} are absolute. 
Moreover, for the specific case of $I=2^{<\omega_1}$, and when the parameter $b$ is a member of a Polish space, absolute classifications are essentially the same as Ulm classifications.
\end{ex}
\begin{ex}
More generally, if $I=\mathrm{HC}$, the space of hereditarily countable sets, and the parameter $b$ is a member of a Polish space, then the notion of absolute classification essentially agrees with the $\mathbf{\Delta}^1_2$-absolute classification of \cite[Chapter 9]{Hjo00}.
\end{ex}

\begin{remark}
Some sort of Absoluteness is necessary for a reasonable notion of classification. For example, in the constructible universe $L$, any equivalence relation $E$ admits a definable classification $x\mapsto ([x]_E)^L$ satisfying (1) and (2), but fails (3) whenever new $E$-classes are added. 
\end{remark}

To extend this notion beyond classifications by countable structures, we relax the Absoluteness requirement (3), to only some forcing extensions.
In the context of equivalence relations defined on Polish spaces, the most natural structure provided is the topology, so it is natural to require absoluteness for Cohen forcing. That is, to require the classification ``respects Baire-category arguments''.

\begin{defn}\label{defn;generic-classification}
Let $E$ be an analytic equivalence relation on a Polish space $X$.
Say that a complete classification $c\colon X\to I$ is a \textbf{generically absolute classification} if
\begin{enumerate}
    \item{[Definability]} There is a set theoretic formula $\psi$ and a parameter $b$ such that $c(x)=A\iff \psi(x,A,b)$.
    \item{[Locality]}  If $V\subset W$ are ZF models and $x,A\in V$ with $\psi^V(x,A,b)$, then $\psi^W(x,A,b)$.
    \item 
    \begin{enumerate}
        \item {[Invariance absoluteness]} If $W$ is a ZF extension of $V$ then $\psi$ defines an $E$-invariant map in $W$. (That is, the map $c$ defined from $\psi$ is a well defined function and is $E$-invariant: $x\mathrel{E}y\implies c(x)=c(y)$.)
        \item {[Generic absoluteness]} If $W$ is a generic extension of $V$ by a single Cohen real, then $\psi$ defines a complete classification of $E$ in $W$.
    \end{enumerate}    
\end{enumerate}
If such a complete classification exists, say that $E$ is \textbf{generically classifiable}.
\end{defn}

Before going further, let us stress that this notion respects and extends the usual intuitions regarding classification.

\begin{remark}\label{remark: gen class respects reducibility}
If $E\leq_B F$ and $F$ admits a generically absolute classification with classifying invariants in $I$, then so does $E$ (Proposition~\ref{prop; Apdx borel reduction respects class}).
\end{remark}
\begin{ex}\label{example: gen class =R no ord inv}
The equality relation $=_\R$ does not admit a generically absolute classification with ordinal invariants (Proposition~\ref{prop: Apdx =_R not ordinal classifiable}). It then follows from Burgess' theorem \cite[35.21 (ii)]{Kechris-DST-1995} that for an analytic equivalence relation $E$, the following are equivalent: $E$ is absolutely classifiable with countable ordinal invariants; $E$ is generically classifiable with ordinal invariants; ${=_\R} \not\leq_B {E}$.
\end{ex}
\begin{ex}\label{example: gen class E0 no power of ordinal invs}
$E_0$ does not admit a generically absolute classification with invariants in $2^\alpha$ for any ordinal $\alpha$ (Proposition~\ref{prop; Apdx E0 not P(ord) class}). It then follows from the dichotomy of Hjorth and Kechris \cite[Theorem 1]{Hjorth-Kechris-Ulm-1995} that (assuming the existence of sharps for reals) for an analytic equivalence relation $E$, the following are equivalent: $E$ is absolutely classifiable with invariants in $2^{<\omega_1}$; $E$ is generically classifiable with invariants in $2^\alpha$ for some ordinal $\alpha$; $E_0\not\leq_B E$. 
\end{ex}
\begin{ex}\label{example: gen class turb no invs}
If $E$ is generically turbulent, then $E$ does not admit a generically absolute classification, with any set of invariants. This follows from the characterization of generic turbulence in \cite[Theorem 3.2.2]{Larson_Zapletal_2020}, see Section~\ref{section: turbulence}.
It then follows from Hjorth's dichotomy \cite[Theorem 9.7]{Hjo00} that for an orbit equivalence relation $E_a$, the following are equivalent: $E_a$ admits a generically absolute classification; $E_a$ admits an absolute classification with hereditarily countable invariants; there is no turbulent action $a'$ with $E_{a'}\leq_B E_a$.
\end{ex}

The following is the main result of this paper. First, $E_1$ admits ``reasonable'' invariants. More importantly, we prove lower bounds, suggesting this is a non trivial notion.
\begin{thm}\label{thm;E_1-classification}
\begin{enumerate}
    \item There is a generic classification of $E_1$ with classifying invariants that are sequences of $E_0$-classes of length $\mathfrak{b}$;
    \item there is no generic classification of $E_1$ using classifying invariants which are sequences of $E_0$-classes of length $<\add$;
    \item there is no complete classification of $E_1$ which is absolute for all forcings.
\end{enumerate}
\end{thm}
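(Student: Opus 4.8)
The plan is to reduce the problem to the lower bound~(2) by exploiting that an all-forcings-absolute classification has, by its very absoluteness, invariants that are fixed, model-independent objects. First I would isolate a \emph{capturing lemma}: if $c$ is absolute for all forcings, with defining formula $\psi$ and parameter $b$, and $\seqq{c_n}{n<\omega}$ is a generic sequence of Cohen reals over a model $V\ni b$, then for the point $x=\seqq{c_n}{n<\omega}\in\R^\N$ the invariant $A:=c(x)$ lies in the tail intersection model $M=\bigcap_{n<\omega}V[\seqq{c_m}{m\geq n}]$. This step is elementary given Locality and Invariance absoluteness: since $c$ is $E_1$-invariant, replacing $c_0,\dots,c_{n-1}$ by ground-model reals yields an $E_1$-equivalent $x'\in V[\seqq{c_m}{m\geq n}]$ with $c(x')=A$; by Invariance absoluteness $\psi$ already defines the invariant of $x'$ inside $V[\seqq{c_m}{m\geq n}]$, and by Locality this value must be $A$, so $A\in V[\seqq{c_m}{m\geq n}]$ for every $n$, hence $A\in M$.

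Second, I would quantify the ``$E_1$-separating width'' of such invariants. Using the analysis of $M$ from \cite{Kanovei-Sabok-Zapletal-2013, Larson_Zapletal_2020} --- the same analysis driving part~(2) --- one shows that the $E_1$-relevant content of any $A\in M$ is coded by a sequence of $E_0$-classes of some length $\lambda$, and, crucially, that $\lambda$ is determined absolutely: it cannot grow in further extensions, because the fixed object $A$ does not.

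Third comes the contradiction. Since $c$ is absolute for all forcings, I would pass to a forcing extension $W$ in which $\add>\lambda$, where $c$ remains, by hypothesis, a complete classification of $E_1$ on $(\R^\N)^W$. But now the fixed invariant $A$ has separating width $\lambda<\add^W$, so the lower-bound mechanism of part~(2) applies verbatim in $W$: with $\add>\lambda$ one can simultaneously meet the $\lambda$-many category requirements needed to align the $E_0$-coordinates of two non-$E_1$-equivalent sequences, producing $x\not\mathrel{E_1}y$ in $W$ with $c(x)=c(y)$ and contradicting completeness.

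The main obstacle is the second step: controlling \emph{arbitrary} invariants rather than the specific $E_0$-sequence invariants of part~(1). One must show that the fixed object $A\in M$ carries no more $E_1$-separating information than a bounded-length sequence of $E_0$-classes, and that this bound is genuinely absolute. This is exactly where the structure of the tail intersection model does the work, and it is what distinguishes (3) from (1): for a merely Cohen-absolute classification the invariant is recomputed in each extension and its width may track $\mathfrak{b}$, whereas absoluteness for \emph{all} forcings freezes the invariant, and hence its width, allowing a single forcing that makes $\add$ overtake it. An alternative route, bypassing the width analysis, is to collapse a large cardinal to $\omega$: if invariants are of uniformly bounded hereditary size they become hereditarily countable in the collapse, yielding an absolute classification of $E_1$ with hereditarily countable invariants and hence, by the theory underlying Example~\ref{example: gen class turb no invs} together with \cite{Hjo00}, a classification by countable structures --- contradicting the Kechris--Louveau theorem \cite{Kechris-Louveau-1997}. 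I would expect the width-based argument to be the cleaner one to push through.
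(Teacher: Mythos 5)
Your opening move --- showing that the invariant $A=c(x)$ of a Cohen generic $x$ lands in the tail intersection model $M$ --- is correct, and it is exactly how the paper begins its proofs of parts (2) and (3). But two things go wrong after that. First, as a formal matter, your proposal only argues part (3), presupposing both the classification of part (1) and the lower-bound mechanism of part (2); those are the bulk of the theorem. Second, and more importantly, your proof of (3) hinges entirely on the ``width analysis'' of your second step, which is neither defined nor proved, and which I do not believe can be proved. The paper's machinery controls only \emph{reals} in $M$: Corollary~\ref{cor: reals in M} says a real $Z\in M$ is decided by $x\restriction f$ for a single $f\in\omega^\omega\cap M$. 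For sets of higher type, the only structure theorem available is $M=V(D)$ (Theorem~\ref{thm;intersection-model}(3)), where $D=\set{[x\restriction f]}{f\in(\omega^\omega)^M}$ is a family of $E_0$-classes of size continuum; nothing there codes an arbitrary $A\in M$ by a sequence of $E_0$-classes of bounded length $\lambda$, let alone a $\lambda$ that is absolute. Indeed, invariants arising from elements of $M$ can have ``width'' as large as $\mathfrak{c}$ (consider $x\mapsto\seqq{[x\restriction f]}{f\in\omega^\omega\cap V}$, a perfectly good generic classification), so the quantity your contradiction needs does not exist without a genuinely new structure theorem for $M$, one the paper never proves because its argument takes a different route.

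Even granting a width $\lambda$ for the frozen invariant $A$, your third step misapplies part (2). That mechanism must be run on a Cohen generic \emph{over the model in which you run it}: in your extension $W$ with $\add^W>\lambda$ it would apply to a new generic $x^\ast$ over $W$ and to its invariant $c(x^\ast)$, a new object whose width is in no way controlled by the width of the old $A$; conversely, the original $x$ is no longer Cohen generic over $W$, so neither the intersection-model analysis nor the alteration Lemma~\ref{lem;alteration} applies to it there. Your fallback route has gaps of the same character: the ``uniformly bounded hereditary size'' of invariants across all extensions is not established, and the implication ``absolute classification by hereditarily countable invariants implies classifiable by countable structures'' is Hjorth's theorem for \emph{orbit} equivalence relations; $E_1$ is not an orbit equivalence relation (that is precisely Kechris--Louveau), and extending that implication to non-orbit relations is essentially the paper's concluding conjecture, not a known theorem. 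For contrast, the paper's actual proof of (3) needs no width notion at all: having proved $M=V(D)$, it invokes Grigorieff's theorem to present $V[x]$ as a generic extension $M[H]$ of $M$, picks a name $\sigma$ and condition forcing $A_\sigma=\check{A}$, takes $H'$ mutually generic over $M[H]$ with $x'=\sigma[H']$, and observes via Lemma~\ref{lem;mutgen} that $x\mathrel{E_1}x'$ would place a tail column $x(n)$ inside $V[x]\cap V[x']\subseteq M$, contradicting $x(n)\notin V[x_{n+1}]\supseteq M$. That mutual-genericity argument is the step for which your proposal has no working substitute.
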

Here $\mathfrak{b}$ is the bounding number, and $\add$ is the additivity number of the meager ideal. The reader is referred to \cite{Blass-handbook-2010} for their definitions and more background.
Part (1) is proven in Section~\ref{sec: complete class for E1}. The lower bounds, parts (2) and (3), are proven in Section~\ref{sec;intersection-model}. 
\begin{question}
What is the minimal $\kappa$ such that $E_1$ admits a generic classification with invariants which are $\kappa$-sequences of $E_0$-classes?
\end{question}
By the above theorem, $\add\leq\kappa\leq\mathfrak{b}$.
We note that the gap between $\mathfrak{b}$ and $\add$ is related to the result of Chichon and Pawlikowsky \cite{Chichon-Pawlikowski-1986}, that adding a single Cohen real collapses the bounding number $\mathfrak{b}$ in the generic extension to be $\add^V$, the additivity number as computed in the ground model.

If $E$ can be generically classified with invariants in $I$, then its Friedman-Stanley jump $E^+$ can be generically classified, with countable subsets of $I$ as classifying invariants (Proposition~\ref{prop: jump is classifiable}).
In particular, $E_1^+$ is generically classifiable.
By the above mentioned result of Thomas, $\approx_{\mathrm{VI}}{\leq_B} {E_1^+}$, and therefore $\approx_{\mathrm{VI}}$ is generically classifiable. (It is not classifiable by countable structures, as $E_1{\leq_B} \approx_{\mathrm{VI}}$.)
\begin{question}
What are the optimal classifying invariants for a generic classification of $\approx_{\mathrm{VI}}$, the virtual isomorphism relation on finitely generated groups?
\end{question}

We conclude with an ambitious conjecture: when replacing ``classification by countable structures'' with ``generic classification'', Hjorth's dichotomy for turbulence \cite{Hjorth-turb-dichotomy-2002} can be extended to non-orbit equivalence relations.
We state this conjecture with a generalized definition of turbulence, for non-orbit equivalence relations (Definition~\ref{defn: generically turbulent double brackets}). This definition is motivated by the  characterization of turbulence of Larson and Zapletal \cite[Theorem 3.2.2]{Larson_Zapletal_2020}, and an extension of this characterization, Theorem~\ref{thm: turbulence characterization intersection model} below.

\begin{conj}
Let $E$ be an analytic equivalence relation. Then exactly one of the following hold.
\begin{itemize}
    \item A turbulent equivalence relation (Definition~\ref{defn: generically turbulent double brackets}) is Borel reducible to $E$.
    \item $E$ admits a generically absolute classification.
\end{itemize}
\end{conj}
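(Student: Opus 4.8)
The plan is to prove that the two alternatives are mutually exclusive, and then establish the genuine dichotomy -- that at least one of them holds -- by an argument modeled on Hjorth's turbulence dichotomy \cite{Hjorth-turb-dichotomy-2002}, but with the role of the group action replaced by the intersection-model analysis of Theorem~\ref{thm: turbulence characterization intersection model}. Mutual exclusivity is the routine direction. Suppose a turbulent $F$ (Definition~\ref{defn: generically turbulent double brackets}) satisfies $F\leq_B E$ and, toward a contradiction, that $E$ admits a generically absolute classification with invariants in some $I$. By Remark~\ref{remark: gen class respects reducibility} the relation $F$ would then inherit a generically absolute classification, contradicting the obstruction recorded in Example~\ref{example: gen class turb no invs}. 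The only point requiring care is that Example~\ref{example: gen class turb no invs} is phrased for the orbit notion of generic turbulence, so I would first check that every $F$ turbulent in the sense of Definition~\ref{defn: generically turbulent double brackets} still carries that obstruction; this should be immediate from how the generalized definition is set up, since it is designed precisely to force triviality of the associated intersection model and hence the absence of any local invariant.

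For the substantive direction I would argue contrapositively: assuming $E$ admits no generically absolute classification, I produce a turbulent $F$ with $F\leq_B E$. The first step is to isolate a canonical candidate invariant and prove it is \emph{universal}. Working over the forcing that adds a Cohen-generic point $\dot x\in X$, let $M_x$ be the intersection model attached to the generic class $[x]_E$ by Theorem~\ref{thm: turbulence characterization intersection model}, and let $c_0(x)$ be a canonical code, computed inside $M_x$, for the invariant data that theorem associates to $[x]_E$. Definability and Locality of $c_0$ are immediate from its forcing-theoretic description together with the upward absoluteness of membership in the intersection model, and Invariance absoluteness holds because $M_x$ depends only on $[x]_E$. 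The key lemma to prove is that $c_0$ is universal: any generically absolute classification of $E$ factors through $c_0$, since $M_x$ contains, by construction, all $E$-invariant information that is simultaneously local and generically absolute. Granting this, $E$ is generically classifiable if and only if $c_0$ itself is complete after adding a Cohen real, so it suffices to show that whenever $c_0$ fails to be complete, a turbulent $F$ reduces to $E$.

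The crux, and the step I expect to be hardest, is the extraction of a turbulent reduction from the failure of $c_0$. Failure of completeness yields, in a Cohen extension, $E$-inequivalent generic points $x,y$ with $c_0(x)=c_0(y)$, that is, a collision the intersection model cannot detect. I would analyze the family of such collisions: mutual genericity should allow one to interpolate between $x$ and $y$ by a chain of generic points realizing the ``local orbit'' combinatorics of turbulence, and the tree of forcing conditions deciding a fixed value of $c_0$ should, by a Gandy--Harrington / effective-descriptive-set-theoretic analysis in the spirit of the $E_0$-dichotomy \cite{Harrington-Kechris-Louvaeu-E0-1990}, condense into a continuous map witnessing $F\leq_B E$ for an $F$ turbulent by construction. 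In the orbit setting this interpolation is handed to us by the action; here it must be synthesized from the mixing properties of the generic conditions, and making this synthesis both continuous (to give a genuine Borel reduction) and turbulent (to land in the negative clause) is the principal obstacle. Two secondary issues I would track throughout: as in Theorem~\ref{thm;E_1-classification}, one should aim only for absoluteness under a single Cohen real and not under all forcings, the latter being already impossible for $E_1$ by Theorem~\ref{thm;E_1-classification}(3); and the invariant $c_0(x)$ should be produced uniformly enough to land in a fixed definable class $I$, so that the resulting classification is informative and respects Borel reducibility as in Remark~\ref{remark: gen class respects reducibility}.
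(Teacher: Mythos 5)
This statement is stated in the paper as an open \emph{conjecture}; the paper contains no proof of it, and your proposal does not close the gap. The half you do handle correctly is mutual exclusivity: if a turbulent $F$ (in the sense of Definition~\ref{defn: generically turbulent double brackets}) satisfies $F\leq_B E$, then by Proposition~\ref{prop; Apdx borel reduction respects class} a generically absolute classification of $E$ would induce one for $F$, contradicting Theorem~\ref{thm: generic turb not generic classifiable} --- which is proved in the paper exactly for the generalized definition, so the ``point requiring care'' you flag is already settled and needs no further checking. That much is fine, but it is the easy direction and it is already in the paper.

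The substantive direction contains two genuine gaps, and each is essentially the whole content of the conjecture rather than a step you can defer. First, your ``universal invariant'' $c_0$ is not constructed. Theorem~\ref{thm: turbulence characterization intersection model} does not associate invariant data to $[x]_E$; it characterizes turbulence of an \emph{orbit} equivalence relation by the triviality $V[[x]]_{E_a}=V$. For a general analytic $E$ the intersection model $V[[x]]_E$ is a proper-class inner model, not a set-sized invariant, and extracting a ``canonical code'' from it is obstructed by the paper's own results: for $E_1$ the model $M=V[[x]]_{E_1}$ fails choice (Theorem~\ref{thm;intersection-model}(1)), and the actual classification of $E_1$ (Theorem~\ref{thm;E_1-classification}(1)) is \emph{not} computed canonically inside $M$ --- it requires an unbounded family $\seqq{f_\alpha}{\alpha<\mathfrak{b}}$ chosen in the ground model as a parameter. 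Your factorization lemma (``every generically absolute classification factors through $c_0$'') is asserted, not argued, and nothing in the paper supports it. Second, the ``condensation'' step --- turning a collision $c_0(x)=c_0(y)$ with $x\not\mathrel{E}y$ into a Borel reduction of some turbulent $F$ into $E$ --- is pure aspiration. In Hjorth's orbit-equivalence dichotomy \cite{Hjorth-turb-dichotomy-2002} the interpolation you invoke is supplied by the group action, and you correctly note it is unavailable here; but you offer no replacement mechanism, only the hope that a Gandy--Harrington-style analysis will produce one. No such mechanism is known --- this is precisely why the paper states the dichotomy as a conjecture and why the related Question~\ref{question;KL} of Kechris and Louveau remains open. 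In short: your exclusivity argument reproduces what the paper proves, and your dichotomy argument is a program, not a proof.
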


\subsection{An intersection model}\label{sec: intro - intersection model}
Let $\mathbb{P}$ be Cohen forcing on $\R^\omega$. Fix $x=\seqq{x(n)}{n<\omega}$ a $\mathbb{P}$-generic over $V$, and let $x_m=0^m\, ^\frown \seqq{x(n)}{n>m}$, a sequence of $m$ zeros followed by a tail of $x$. Define the intersection model 
\begin{equation*}
    M=\bigcap_{n<\omega}V[x_n].
\end{equation*}
In a sense, $M$ sees all ``generic tail events''. Its relationship with the equivalence relation $E_1$ is immediate: each $x_m$ is $E_1$-related to $x$.
This model and its relationship with $E_1$ was introduced in \cite[4.4]{Kanovei-Sabok-Zapletal-2013} and \cite[4.1]{Larson_Zapletal_2020}.
In both cases, interesting properties of $E_1$ were deduced from simple properties of this intersection model: essentially that each $x(n)$ is not in $M$.
It follows from \cite[3.1.1 (i)]{Kanovei-Sabok-Zapletal-2013}, or \cite[4.2.9]{Larson_Zapletal_2020}, that $M$ is a model of ZF.
However, the precise identity of $M$ remained open, even whether or not it satisfies ZFC.

These questions about $M$ are closely related to questions about possible generic classifications of $E_1$. For example, we show that the axiom of choice in fact fails in $M$, and this is closely related to the generic classification of $E_1$ from Theorem~\ref{thm;E_1-classification}.

\begin{thm}\label{thm;intersection-model}
\begin{enumerate}
    \item ``Choice for $\mathfrak{b}$-sequences of $E_0$-classes'' fails in $M$;
    \item $M$ satisfies DC;    
    \item $M=V(B)$, the minimal ZF-model extending $V$ and containing $B$, where $B$ is a set of reals.
\end{enumerate}
\end{thm}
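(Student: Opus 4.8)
The plan is to treat the three parts as a single package, reading off all of them from an analysis of which objects survive into $M$, and to use the generic classification $c$ of $E_1$ from Section~\ref{sec: complete class for E1} (Theorem~\ref{thm;E_1-classification}(1)) to produce the generating set $B$. The first observation is that the classifying invariant of the generic sequence already lives in $M$. Write $c(x)=\seqq{C_\alpha}{\alpha<\mathfrak b}$. Each $x_n$ is a finite modification of $x$, hence $x_n\mathrel{E_1}x$ as noted in the introduction. By Invariance absoluteness, $\psi$ defines a well-defined $E_1$-invariant map already in $V[x_n]$, so there is $A\in V[x_n]$ with $\psi^{V[x_n]}(x_n,A,b)$; by Locality this persists to $V[x]$, where $E_1$-invariance gives $c(x)=c(x_n)=A\in V[x_n]$. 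As this holds for every $n$, we get $c(x)\in\bigcap_n V[x_n]=M$. I would then let $B\subset\R$ be the set of reals coding this invariant (the union of the classes $C_\alpha$ together with a real coding the indexing). Since $B\in M$, $V\subset M$, and $M\models\mathrm{ZF}$, minimality of $V(B)$ gives $V(B)\subset M$ for free.

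The heart of part (3) is the reverse inclusion $M\subset V(B)$, and this is where I expect the main difficulty. Given $a\in M$, for every $n$ the set $a$ lies in $V[x_n]$ and hence has a $\P_{>n}$-name using only the coordinates above $n$. I would show $a$ is definable in $V[x]$ from $B$, finitely many ordinals, and a parameter in $V$, so that $a\in V(B)$ by the standard description of $V(B)$ as the sets hereditarily definable from $B$, ordinals, and elements of $V$. The tool is the homogeneity of Cohen forcing via the group $\mathcal G$ of automorphisms of $\P$ generated by finitely supported coordinate permutations together with the $E_0$-preserving within-coordinate Cohen automorphisms, restricted to those fixing a name for $B$. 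A finitely supported permutation sends $x$ to a finite modification of itself, hence fixes $c(x)=B$, and---because $a$ has a name avoiding any fixed finite initial segment of coordinates---fixes $a$ as well; the delicate point is to combine these with enough $E_0$-preserving within-coordinate automorphisms so that the fixed points of $\mathcal G$ are exactly $V(B)$. The obstacle is that $M$ is \emph{not} a standard symmetric model: the natural filter of subgroups $H_n$ fixing the first $n$ coordinates has trivial intersection and is not countably complete, so the soft symmetric-model machinery does not apply, and one must run a genuine mutual-genericity argument showing that any information in $a$ not already coded by $B$ would be freshly generic over some tail model $V[x_n]$, contradicting $a\in V[x_n]$.

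For part (2) I would derive $M\models\mathrm{DC}$ from the structural description $M=V(B)$ rather than from closure under $\omega$-sequences, which genuinely fails here (a Cohen real such as $x(0)$ is an $\omega$-sequence of elements of $M$ that is not itself in $M$). Given a total relation $R\in M$ on a set $S\in M$, I would use $\mathrm{AC}$ in $V[x]$ to build a dependent sequence $\seqq{a_i}{i<\omega}$ with $a_i\mathrel{R}a_{i+1}$, and then correct it into $M$. Each $a_i$ already lies in every tail model, so the only issue is the coherence of the indexing; I would select the $a_i$ by a name-theoretic fusion along the coordinates, arranging that the whole sequence is fixed by the group $\mathcal G$ of part (3) and is therefore tail-invariant. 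Here too the main obstacle is the failure of countable completeness of the filter: a single finite-support automorphism cannot push an infinite sequence into a tail, so the correction must be organized diagonally.

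Finally, part (1) is the payoff of the same analysis. The $\mathfrak b$-sequence of $E_0$-classes $c(x)=\seqq{C_\alpha}{\alpha<\mathfrak b}$ belongs to $M$, and I would show it admits no choice function there. By the reverse inclusion of part (3), a selector $f\in M$ with $f(\alpha)\in C_\alpha$ would be definable from $B$ in $V[x]$, and I would argue by genericity that honest representatives of these classes reconstruct a real that is not tail-invariant---morally, $f$ decodes one of the generic reals $x(m)$ up to $E_0$ at a bounded stage---so $f\notin V[x_n]$ for some $n$, contradicting $f\in M$. This is the same phenomenon underlying the lower bounds in parts (2) and (3) of Theorem~\ref{thm;E_1-classification}, which are proven alongside it in Section~\ref{sec;intersection-model}.
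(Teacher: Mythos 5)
Your proposal has the right easy halves (the invariant $c(x)$ does land in $M$, and $V(B)\subset M$ is indeed free once $B\in M$), but each of the three hard cores is either missing or rests on a step that fails. The most concrete problem is your choice of generating set in part (3). You take $B$ to be (essentially) $\bigcup_{\alpha<\mathfrak b}[x\restriction f_\alpha]$ along a fixed scale from $V$; the paper's generating set is $D=\set{[x\restriction f]}{f\in(\omega^\omega)^M}$, over \emph{all} functions of $M$, and this difference is essential. Reals of $M$ are exactly those determined by $x\restriction f$ with $f\in(\omega^\omega)^M$ (Corollary~\ref{cor: reals in M}), and $(\omega^\omega)^M$ contains functions not eventually dominated by any $f_\alpha$: for instance, if $\mathfrak b<\mathfrak d$ in $V$, any $g\in V$ witnessing that the scale is not dominating, and then $x\restriction g\in M$ by Remark~\ref{remark: stuff in M}. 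Your own homogeneity tool refutes your $B$ here: any definition of $x\restriction g$ from $B$, ordinals, $V$-parameters and finitely many reals $y_1,\dots,y_j\in\bigcup B$ (with domains below $f_{\alpha_1},\dots,f_{\alpha_j}$) is destroyed by flipping $x$ at a single point $(n,g(n))$ with $g(n)>\max_i f_{\alpha_i}(n)$ --- such $n$ exist cofinally --- since the flip fixes every class $[x\restriction f_\alpha]$ setwise and fixes each $y_i$, yet changes $x\restriction g$. So in general $V(B)\subsetneq M$ and the hard inclusion must be run with the larger $D$. Moreover the paper proves $M\subset V(D)$ not by a symmetric-model fixed-point analysis (which, as you note, is unavailable) but by a different mechanism you would still need: force over $V(D)$ with a collapse to produce a single $\P$-generic $y$ over $V$ agreeing with $x$ below a function dominating all of $(\omega^\omega)^M$ (Claim~\ref{claim;forcing-over-V(D)}), prove $M\subset V[y]$ by induction on rank using the canonical-name/diagonal machinery (Lemmas~\ref{lem;d^Z} and~\ref{lemma: restriction to dominating function}, Lemma~\ref{lem;dominating-defines-M}), and close with the mutual-genericity Lemma~\ref{lem;mutgen}. (Also, a single real cannot code the indexing of a $\mathfrak b$-sequence; that part is harmless only because the indexing is recoverable from the domains and the scale in $V$.)

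Parts (2) and (1) have their own gaps, and your dependency order is backwards. For (2), the principle you lean on --- that $M=V(B)$ for a set of reals $B$ yields DC --- is false: the basic Cohen symmetric model is of the form $V(A)$ with $A$ a set of Cohen reals, and there even countable choice fails. The paper proves DC first and directly (Proposition~\ref{prop;DC}): for each $n$ one chooses, uniformly and definably from $x_n$ and the underlying set, the least $\P^n$-name of a well-ordering, and the recursively built $\omega$-sequence is then definable in every tail model simultaneously, hence lies in $M$; no group-theoretic ``correction'' is needed. The order matters because DC in $M$ is what makes Corollary~\ref{cor: reals in M} (hence part (3)) work, so deriving DC from part (3) is circular in the paper's architecture. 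For (1), your identification of the sequence of classes in $M$ is fine, but the non-existence of a selector is the actual content (Lemma~\ref{lem;no-choice-b-E0-classes}): one thins the selector to an unbounded $Y\subset\mathfrak b$ below a single condition $p$, picks a column $n$ avoiding $\dom p$ with $\set{f_\alpha(n)}{\alpha\in Y}$ unbounded in $\omega$, and uses that the selector lies in $V[x_{n+1}]$ --- so it has a $\P^{n+1}$-name ignorant of column $n$ --- to build a condition deciding a bit of the generic outside its own domain, a contradiction. Your ``morally, $f$ decodes a generic column'' names the right phenomenon, but routing it through part (3) buys nothing (mere definability from $B$ gives no contradiction), and the forcing argument that actually delivers the contradiction is absent.
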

(Note that $\mathfrak{b}$ as calculated in $M$ is in fact $\add^V$, by the aforemetioned result of Cichon and Pawlikowsky.) 
We also establish a characterization of reals in $M$, see Corollary~\ref{cor: reals in M}, which is closely related to the proof of part (2) in Theorem~\ref{thm;E_1-classification}.
Part (3) of Theorem~\ref{thm;intersection-model} can be seen as a weak positive fragment of choice in $M$, and will be used to conclude part (3) of Theorem~\ref{thm;E_1-classification}.

\begin{question}
What other forms of choice hold in the intersection model $M$?
Does $\mathrm{DC}_{\kappa}$ hold for an uncountable $\kappa<\add^V$?
\end{question}

\begin{question}
Suppose $x\in[0,1]^\omega$ is a Random real, with respect to the product measure. Let $N=\bigcap_{n<\omega}V[x_n]$, where $x_n$ is the tail of $x$ past $n$. What can we say about $N$?
What reals in $N$ look like? 
(As in Proposition~\ref{prop;DC}, $N$ is a model of DC.)
\end{question}

\subsection*{Acknowledgments}
I would like to thank James Cummings for numerous long and insightful conversations, without which this paper would not have been.
I would also like to thank Filippo Calderoni, Clinton Conley, Paul Larson, and Jindrich Zapletal for very helpful discussions. 

\section{A complete classification for $E_1$}\label{sec: complete class for E1}

Replace $\R$ with the space $2^\omega$, and consider $E_1$ as an equivalence relation on $(2^\omega)^\omega$ (see \cite[8.1]{Gao09}.)
Given $x\in(2^\omega)^\omega$, we consider $x$ as a function $x\colon\omega\times\omega\to\{0,1\}$, a matrix of 0's and 1's, and identify it with a member of the space $2^{\omega\times\omega}$.
So for $x,y\in (2^\omega)^\omega$, they are $E_1$-equivalent if the agree on all but finitely many columns.

We begin by identifying some interesting $E_1$-invariants. We will reach a complete classification of $E_1$ by collecting ``enough'' of these invariants.

\begin{defn}
Given a function $f\colon\omega\to\omega+1$, define the function $x\restriction f$ whose domain is $\set{(n,m)}{m<f(n)}$, the area below the graph of $f$, by
\begin{equation*}
    (x\restriction f)(n,m)= x(n,m)\hspace{0.2cm}\textrm{for }m<f(n).
\end{equation*}
\end{defn}

\begin{defn}
Given a partial function $y\colon \omega\times\omega\to\{0,1\}$, define $[y]$ to be the set of all functions $z$ with the same domain as $y$ such that $z$ and $y$ differ in at most finitely many places.
\end{defn}
Note that $[y]$ can be identified with an $E_0$-class.
\begin{remark}
Let $f\colon\omega\to\omega$ be some function and suppose that $y,z\colon\omega\times\omega\to\{0,1\}$ are partial functions with domain $\set{(n,m)}{m<f(n)}$. Then $[y]=[z]$ if and only if $y$ and $z$ agree on all but finitely many columns.
\end{remark}
In conclusion:
\begin{claim}
Given $f\colon\omega\to\omega$, the map $x\mapsto [x\restriction f]$ is $E_1$-invariant. That is, if $x,y$ are $E_1$-related, then $[x\restriction f]=[y\restriction f]$.
\end{claim}

The reader is refered to either \cite{Blass-handbook-2010, Halbeisen-book-2017, Jech2003} for the eventual domination order $<^\ast$ on $\omega^\omega$, the space of functions from $\omega$ to $\omega$.
Fix $\seqq{f_\alpha}{\alpha<\mathfrak{b}}$ an $<^\ast$-increasing and unbounded sequence of functions in $\omega^\omega$ such that each $f_\alpha$ is increasing. For $x\in (2^\omega)^\omega$, define
\begin{equation*}
    c(x)=\seqq{[x\restriction f_\alpha]}{\alpha<\mathfrak{b}}.
\end{equation*}
\begin{claim}\label{claim: complete classification of E1}
$c$ is a complete classification of $E_1$.
\end{claim}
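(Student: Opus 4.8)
The plan is to show both directions of the biconditional $x \mathrel{E_1} y \iff c(x) = c(y)$. One direction is already essentially done: by the preceding Claim, each coordinate map $x \mapsto [x \restriction f_\alpha]$ is $E_1$-invariant, so if $x \mathrel{E_1} y$ then $[x \restriction f_\alpha] = [y \restriction f_\alpha]$ for every $\alpha < \mathfrak{b}$, whence $c(x) = c(y)$. The content is entirely in the converse: I must show that if $x$ and $y$ are \emph{not} $E_1$-equivalent, then $c(x) \neq c(y)$, i.e.\ the sequence of $E_0$-invariants separates distinct $E_1$-classes.

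So assume $x \not\mathrel{E_1} y$. By definition this means $x$ and $y$ differ on infinitely many columns; more precisely, there are infinitely many $n$ such that the $n$-th columns $x(n,\cdot)$ and $y(n,\cdot)$ differ (as elements of $2^\omega$). For each such column $n$, fix a witness $m_n$ with $x(n,m_n) \neq y(n,m_n)$, and let $g \colon \omega \to \omega$ be the function recording these witnesses: I would set $g(n) = m_n + 1$ on the infinitely many columns where $x$ and $y$ disagree (and, say, $g(n)=0$ elsewhere). The key point is that on each disagreement column $n$, the restrictions $x \restriction f_\alpha$ and $y \restriction f_\alpha$ already differ \emph{within that column} as soon as $f_\alpha(n) > m_n$, i.e.\ as soon as $f_\alpha(n) \geq g(n)$.

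The crux is to find a single index $\alpha < \mathfrak{b}$ for which $[x \restriction f_\alpha] \neq [y \restriction f_\alpha]$, and this is where the unboundedness of $\seqq{f_\alpha}{\alpha < \mathfrak{b}}$ enters. Since the sequence is $<^\ast$-unbounded, there is some $\alpha$ with $f_\alpha \not<^\ast g$, meaning $f_\alpha(n) \geq g(n)$ for infinitely many $n$. Intersecting with the infinite set of disagreement columns, I claim I can arrange that $f_\alpha(n) > m_n$ for infinitely many disagreement columns $n$. For each such $n$, the point $(n, m_n)$ lies in the domain of both $x \restriction f_\alpha$ and $y \restriction f_\alpha$, and the two functions disagree there; since these disagreements occur in infinitely many \emph{distinct columns}, $x \restriction f_\alpha$ and $y \restriction f_\alpha$ differ on infinitely many columns, so by the Remark preceding the Claim, $[x \restriction f_\alpha] \neq [y \restriction f_\alpha]$, giving $c(x) \neq c(y)$.

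The main obstacle is a small quantifier-ordering subtlety in the last step: unboundedness gives $f_\alpha(n) \geq g(n)$ for infinitely many $n$, but I must ensure these $n$ can be taken among the disagreement columns so that actual within-column disagreements accumulate. I would handle this by defining $g$ so that it is large precisely on disagreement columns and small elsewhere — concretely, making $g$ dominate any fixed bound off the disagreement set is unnecessary; instead, since $f_\alpha \not<^\ast g$ fails to hold only when $f_\alpha$ eventually dominates $g$, choosing $g(n) = m_n + 1$ on disagreement columns and $g(n)$ growing fast enough off them forces the infinitely many $n$ with $f_\alpha(n) \geq g(n)$ to land on disagreement columns. I would verify this packaging carefully, as it is the one place where the argument could silently fail; everything else is bookkeeping with the definitions of $[\cdot]$ and $x \restriction f$.
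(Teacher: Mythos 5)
Your overall route---proving the contrapositive, by building a function $g$ from the disagreement witnesses and applying $<^\ast$-unboundedness to produce a single separating index $\alpha$---is the paper's argument read backwards, and it can be made to work. But the step you yourself flagged as the crux is genuinely broken as you propose to handle it. No choice of $g$ off the disagreement set $D$ can ``force the infinitely many $n$ with $f_\alpha(n)\geq g(n)$ to land on disagreement columns'': the index $\alpha$ is produced by unboundedness only \emph{after} $g$ is fixed, and nothing constrains the values of $f_\alpha$ off $D$, so the non-domination points may all lie outside $D$ no matter how fast $g$ grows there. (Relatedly, your parenthetical ``$f_\alpha\not<^\ast g$ fails to hold only when $f_\alpha$ eventually dominates $g$'' misreads the negation: $<^\ast$ is not total, and the negation of $f_\alpha<^\ast g$ is only that $f_\alpha(n)\geq g(n)$ infinitely often.)

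What actually saves the argument---and what your proof never uses---is the hypothesis that each $f_\alpha$ is \emph{increasing}. This hypothesis is not cosmetic. If the $f_\alpha$ were allowed to vanish on even coordinates (an unbounded family can do this, putting all its growth on the odd coordinates), and $x,y$ differed exactly at the points $(n,0)$ for even $n$, then $x\restriction f_\alpha=y\restriction f_\alpha$ for every $\alpha$, so $c(x)=c(y)$ while $x\not\mathrel{E_1}y$. Hence any correct proof must invoke monotonicity, and your crux claim, which does not, cannot be right. The repair is: set $g(k)=m_{n(k)}+1$ where $n(k)=\min\set{n\in D}{n\geq k}$, a look-ahead to the \emph{next} disagreement column rather than a growth condition. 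If $f_\alpha\not<^\ast g$, then at each non-domination point $k$ (which may well lie off $D$) monotonicity gives $f_\alpha(n(k))\geq f_\alpha(k)\geq g(k)>m_{n(k)}$, so the disagreement at $(n(k),m_{n(k)})$ lies in the domain of both restrictions; and since each $n\in D$ has only finitely many $k$ with $n(k)=n$, infinitely many non-domination points yield infinitely many such columns, whence $[x\restriction f_\alpha]\neq[y\restriction f_\alpha]$. This is precisely the role increasingness plays in the paper's direct proof, where unboundedness of $\set{f_\alpha(k')}{\alpha\in X}$ at a single coordinate $k'$ is spread to all later coordinates.
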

\begin{proof}
The map is invariant, as each coordinate is invariant. It remains to show that if $c(x)=c(y)$ then $x\mathrel{E_1} y$.
In fact it suffices to assume that for unboundedly many $\alpha<\mathfrak{b}$, $[x\restriction f_\alpha]=[y\restriction f_\alpha]$.
If so, we many find an unbounded subset $X\subset\mathfrak{b}$ and some $k<\omega$ such that for any $\alpha\in X$, any $n>k$ and $m<f_\alpha(n)$, $(x\restriction f_\alpha)(n,m)=(y\restriction f_\alpha)(n,m)$.
That is, 
\begin{center}
$(\star)$\, for any $\alpha\in X$, any $n>k$ and $m<f_\alpha(n)$, $x(n,m)=y(n,m)$.    
\end{center}
As $\seqq{f_\alpha}{\alpha\in X}$ is $<^\ast$-unbounded, there must be some $k'\geq k$ such that $\set{f_\alpha(k')}{\alpha\in X}$ is unbounded in $\omega$. Since each $f_\alpha$ is increasing, it follows that for any $n\geq k'$, $\set{f_\alpha(n)}{\alpha\in X}$ is unbounded in $\omega$.
Finally, given any $n\geq k'$ and any $m$, there is some $\alpha\in X$ such that $f_\alpha(n)>m$, and so, by $(\star)$, $x(n,m)=y(n,m)$. That is, $x$ and $y$ agree on all columns past $k'$, and so are $E_1$-equivalent.
\end{proof}
Note that the map $x\mapsto c(x)$ is definable using the parameter $\seqq{f_\alpha}{\alpha<\mathfrak{b}}$.
Furthermore the calculation of $c(x)$ is absolute (local), that is, clause (2) of Definition~\ref{defn;generic-classification} holds.
The map $c$ is $E_1$-invariant in any extension, so clause (3)(a) of Definition~\ref{defn;generic-classification} is satisfied.
Finally, in the proof above we only required that $\seqq{f_\alpha}{\alpha<\mathfrak{b}}$ is unbounded in the eventual domination order, to conclude that $c$ is a complete classification. Since Cohen forcing does not add dominating reals (see \cite[Lemma~22.2]{Halbeisen-book-2017}), $\seqq{f_\alpha}{\alpha<\mathfrak{b}}$ remains unbounded in a Cohen-real extension, so the map $c$ remains a complete classification, thus (3)(b) of Definition~\ref{defn;generic-classification} is satisfied.
This concludes part (1) of Theorem~\ref{thm;E_1-classification}.

\section{The intersection model for $E_1$}\label{sec;intersection-model}
Let $\mathbb{P}$ be Cohen forcing on $2^{\omega\times\omega}$. That is, the conditions of $\P$ are finite partial functions from $\omega\times\omega$ to $\{0,1\}$, ordered by reverse extension. A generic filter for $\P$ can be identified with a generic real $x\in 2^{\omega\times\omega}$. Note that $\P$ is forcing equivalent to Cohen forcing for adding one real in $2^\omega$.

Fix $x\in 2^{\omega\times\omega}$ a $\mathbb{P}$-generic over $V$.
Define the step functions $\delta_n\colon \omega\to \omega+1$ by $\delta_n(m)=0$ if $m<n$ and $\delta_n(m)=\omega$ if $m\geq n$, and let $x_n=x\restriction\delta_n$. That is, $x_n$ is a tail of columns of $x$. 
Let $\mathbb{P}^n$ be the poset of all finite partial functions from $(\omega\setminus n)\times\omega$ to $\{0,1\}$. So $\mathbb{P}=\mathbb{P}^0$, and each $x_n$ may be identified as a $\mathbb{P}^n$-generic.

We are interested in the following intersection model, which we fix for the remainder of the paper.
\begin{defn}\label{defn: intersection model M}
$M=\bigcap_{n<\omega}V[x_n]$.
\end{defn}
This is the same model as described in Section~\ref{sec: intro - intersection model} (after identifying $\R^\omega$ with $2^{\omega\times\omega}$). $M$ satisfies ZF by \cite[4.2.9]{Larson_Zapletal_2020}.
Moreover, $M$ satisfies DC (part (2) of Theorem~\ref{thm;intersection-model}).
\begin{prop}\label{prop;DC}
DC holds in $M$.
\end{prop}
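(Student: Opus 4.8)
The plan is to verify that Dependent Choice holds in $M=\bigcap_{n<\omega}V[x_n]$ by reducing any instance of DC in $M$ to a choice that can be made in the full Cohen extension $V[x]$ and then checking that the resulting sequence lands back inside $M$. So suppose, in $M$, we are given a nonempty set $S$ and a relation $R\subseteq S\times S$ such that every element of $S$ has an $R$-successor; working in $V[x]$, where AC holds and $S,R$ are available (since $M\subseteq V[x]$), we can build an $\omega$-sequence $\seqq{s_k}{k<\omega}$ with $s_0\in S$ arbitrary and $s_{k+1}\mathrel{R}s_k$ for all $k$. The entire content of the proposition is that such a sequence, or at least \emph{some} such sequence, can be found already in $M$.

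\medskip

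The key tool I expect to use is a homogeneity/symmetry property of the iterated tail structure. The point is that $V[x]$ is a Cohen extension of each $V[x_n]$ by the finitely many ``initial'' columns, and more importantly the whole configuration $\seqq{V[x_n]}{n<\omega}$ has enough automorphisms (induced by finite modifications and permutations of the relevant Cohen coordinates) to pin down which objects of $V[x]$ actually lie in $M$. First I would fix, in $M$, names and a condition forcing the DC hypothesis, and then argue that a DC-sequence can be chosen \emph{canonically} enough that it is fixed by all the automorphisms witnessing membership in $M$. Concretely, the standard approach is: since $M\models\mathrm{ZF}$ and $M$ is closed under $\omega$-sequences that are definable from parameters in $M$ together with a fragment of the generic, one shows that a countable sequence built by $\omega$ many successive choices, each of which is made in some $V[x_n]$ and depends only on finitely much of $x$, can be assembled so that the whole sequence already belongs to $\bigcap_n V[x_n]$. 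The mechanism is that each individual choice $s_{k+1}$ from $s_k$ can be taken to depend on only finitely many columns beyond a fixed tail, so by a fusion/diagonalization across the $V[x_n]$ one obtains a single sequence lying in every $V[x_n]$.

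\medskip

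More precisely, I would exploit that $V[x]$ is itself a Cohen (hence homogeneous, $\omega$-distributive-free but ccc) extension: any real, or any countable sequence of elements of a set in $M$, appearing in $V[x]$ is added by a countable Cohen subextension, and one can arrange the successive DC-choices so that the $k$-th choice uses only the columns in some finite set, with these finite sets chosen to march off to infinity. Then for each $n$, all but finitely many of the choices depend only on columns $\geq n$, i.e.\ lie in $V[x_n]$; combined with the fact that the finitely many early choices are fixed ground-model-definable data, the full sequence is seen to lie in $V[x_n]$ for every $n$, hence in $M$. This ``the tail of the sequence lives in every tail model'' phenomenon is exactly what makes DC (a single $\omega$-sequence of choices) survive the intersection, whereas full AC does not.

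\medskip

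The main obstacle, and the step I would spend the most care on, is the assembly/fusion argument: ensuring that the successive choices can genuinely be localized to disjoint (or upward-diverging) finite sets of columns while still respecting the relation $R$, and that the limit sequence is captured in each $V[x_n]$ rather than merely in $V[x]$. This requires a density/genericity argument showing that for a generic $x$ one can always extend the partial sequence using only new columns far enough out, which in turn relies on the product structure $\P\cong\P^n\times(\text{finite part})$ and the mutual genericity of the tails. I would need to be careful that $R$ itself, as an element of $M$, is coded by a name that is invariant under the automorphisms fixing $M$, so that the ``successor exists'' hypothesis remains forced uniformly and the choice of successor can legitimately be pushed into an arbitrarily high tail model.
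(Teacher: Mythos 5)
Your proposal correctly isolates the crux --- getting the $\omega$-sequence of choices, and not merely its individual terms, into every $V[x_n]$ --- but the mechanism you offer does not resolve it, and in one respect points in the wrong direction. Note first that the terms themselves are never the issue: $S\in M$ and $M$ is transitive, so every element of $S$ lies in $M$ automatically, hence in each $V[x_n]$ (in particular your claim that the finitely many early choices are ``fixed ground-model-definable data'' is false, but also unnecessary). The entire difficulty is uniform definability: for a fixed $n$, knowing that each $s_k$ with $k\geq K_n$ lies in $V[x_n]$ --- or even that each is ``determined by columns $\geq n$'' --- does not place the sequence $\seqq{s_k}{k\geq K_n}$ inside $V[x_n]$ as a single object. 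This is exactly the failure mode exploited in Lemma~\ref{lem;no-choice-b-E0-classes}: there each class $A_\alpha$ and each of its members lies in $M$, yet no sequence of choices does. So a ``fusion/diagonalization'' that only controls where the individual choices live cannot close the argument. Your localization idea is also unsound on its own terms: elements of a set $S\in M$ (think of classes $[x\restriction f]$ with $f\in\omega^\omega\cap M$) typically depend on \emph{all} columns of $x$, so there is no reason an $R$-successor can be selected using only finitely many columns; and data coded by a finite set of columns $F_k$ with $\min F_k\geq n$ lies in $V[x_n]$ but in no $V[x_m]$ with $m>\max F_k$, so such data can never be part of an object of $M$.

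What is missing is a \emph{canonical} choice mechanism definable uniformly in every tail model, and this is how the paper proceeds. For each $n$, since $V[x_n]\models\mathrm{ZFC}$, there is a $\P^n$-name $\tau$ with $\tau[x_n]$ a well-ordering of $S$; take $\tau_n$ to be the least such name in a fixed well-ordering of $V$, and set $W_n=\tau_n[x_n]$. The point is that for every $m\leq n$, the model $V[x_m]$ can compute $W_n$ from $S$ and $x_m$ (since $x_n$ is definable from $x_m$ and both name-evaluation and ``is a well-ordering of $S$'' are absolute). Now define $s_{k+1}$ to be the $W_{k+1}$-least $R$-successor of $s_k$. For each $m$, the tail $\seqq{s_k}{k\geq m}$ is then outright definable in $V[x_m]$ from $s_m$, $S$, $R$, $x_m$, and the fixed well-order of names (all available in $V[x_m]$, using transitivity of $M$ for $s_m$), while the finite initial segment is in $V[x_m]$ as a finite tuple of its elements; hence the whole sequence lies in every $V[x_m]$, i.e.\ in $M$. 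No homogeneity or automorphism analysis of $M$ is needed; the single idea your write-up lacks is this uniform ``least-name'' well-ordering, which replaces locality in columns by canonicity across the decreasing chain of tail models.
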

\begin{proof}
Suppose $R$ is a relation on $X$, in $M$, such that $\forall x\exists y (x\mathrel{R}y)$ holds.
So $X$ is in $V[x_n]$ for each $n$.

For each $n$ we may choose a well ordering of $X$ in $V[x_n]$, definably in $X$ and $x_n$, as follows: choose a minimal $\P^n$-name $\tau$ (according to some well ordering in $V$) so that $\tau[x_n]$ is a well ordering of $X$. This definition can be carried uniformly in any model $v[x_m]$ with $m\leq n$.

Define a sequence $\seqq{x_n}{n<\omega}$ recursively so that $x_{n+1}$ is the minimal, according to the above chosen well ordering of $X$ in $V[x_{n+1}]$, with $x_n\mathrel{R}x_{n+1}$.
For each $m$, the sequence $\seqq{x_n}{n\geq m}$ is definable in $V[x_m]$.
Each $x_i$ is in $M$, and therefore in $V[x_m]$, and so the sequence $\seqq{x_n}{n<\omega}$ is in $V[x_m]$, for each $m$. It follows that $\seqq{x_n}{n<\omega}\in M$, as required.
\end{proof}
\begin{remark}
More generally, DC holds in the intersection model arising from any countable coherent sequence: see \cite{Larson_Zapletal_2020}, ``Supplemental materials'', ``Updated Theorem 4.2.9''.
\end{remark}

Next we prove part (1) of Theorem~\ref{thm;intersection-model}, that the axiom of choice fails in $M$.

\begin{remark}\label{remark: stuff in M}
Thinking of $x$ as a matrix of 0's and 1's, one can see that each row of $x$, as an element of $2^\omega$, is in $M$. In fact, given $f\in\omega^\omega$ in $V$, $x\restriction f$ is in $M$.
More generally, given $f\in \omega^\omega$ in $M$, $x\restriction f$ is in $M$. We will see below that any real in $M$ is definable from one of those.
\end{remark}

Fix a $<^\ast$-increasing and unbounded sequence $\seqq{f_\alpha}{\alpha<\kappa}$ in $V$.
Since Cohen forcing does not add dominating functions, and $V\subset M\subset V[x]$, the sequence $\seqq{f_\alpha}{\alpha<\kappa}$ is $<^\ast$-unbounded in $M$ as well.

By the remark above, each $x\restriction f_\alpha$ is in $M$, and so is $A_\alpha=[x\restriction f_\alpha]$, the set of all finite changes of $x\restriction f_\alpha$. Define $A=\seqq{A_\alpha}{\alpha<\kappa}$. 
\begin{claim}
$A\in M$.
\end{claim}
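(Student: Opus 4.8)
The plan is to show that $A\in V[x_n]$ for every $n<\omega$; since $M=\bigcap_{n<\omega}V[x_n]$, this immediately yields $A\in M$. The point is \emph{not} merely that each $A_\alpha\in M$ (which Remark~\ref{remark: stuff in M} already gives): that alone would not place the \emph{sequence} $A$ in $M$, since $M$ need not satisfy choice. Rather, I would exhibit, inside each $V[x_n]$, a single uniform definition of the whole sequence from the parameter $\seqq{f_\alpha}{\alpha<\kappa}$.

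The crucial observation is that $A_\alpha$ can be recomputed from the tail $x_n$, namely
\begin{equation*}
    A_\alpha=[x\restriction f_\alpha]=[x_n\restriction f_\alpha].
\end{equation*}
To see this, recall that $x_n=x\restriction\delta_n$ agrees with $x$ on every column of index $\geq n$ and is zero on the first $n$ columns. Hence $x\restriction f_\alpha$ and $x_n\restriction f_\alpha$ have the same domain and agree off the first $n$ columns. Since $f_\alpha\in\omega^\omega$ is finite-valued, the part of the domain lying in the first $n$ columns, namely $\set{(i,m)}{i<n,\ m<f_\alpha(i)}$, has size $\sum_{i<n}f_\alpha(i)<\omega$. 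Thus $x\restriction f_\alpha$ and $x_n\restriction f_\alpha$ differ in only finitely many places, so they determine the same $E_0$-class, as claimed.

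Given this, the proof concludes quickly. In $V[x_n]$ we have both $x_n$ and the ground-model sequence $\seqq{f_\alpha}{\alpha<\kappa}\in V\subset V[x_n]$ available as parameters, and the assignment $\alpha\mapsto[x_n\restriction f_\alpha]$ is definable from them. Therefore
\begin{equation*}
    A=\seqq{A_\alpha}{\alpha<\kappa}=\seqq{[x_n\restriction f_\alpha]}{\alpha<\kappa}\in V[x_n].
\end{equation*}
As $n$ was arbitrary, $A\in\bigcap_{n<\omega}V[x_n]=M$. The only step requiring care is the finiteness of the discrepancy between $x\restriction f_\alpha$ and $x_n\restriction f_\alpha$, which is exactly where the hypothesis that each $f_\alpha$ is finite-valued is used; everything else is a routine definability-in-$V[x_n]$ argument with the single fixed parameter $\seqq{f_\alpha}{\alpha<\kappa}$.
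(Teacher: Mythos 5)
Your proof is correct and is essentially the paper's own argument: the paper introduces $y_n$ (equal to $x$ on columns of index $\geq n$ and zero on the first $n$ columns), observes that $A_\alpha=[y_n\restriction f_\alpha]$ since the two restrictions differ in only finitely many places, and concludes that the whole sequence is uniformly definable in $V[x_n]$ from $y_n$ and the ground-model parameter $\seqq{f_\alpha}{\alpha<\kappa}$ --- exactly your argument, including the emphasis that membership of each $A_\alpha$ in $M$ alone would not suffice. One notational caveat: in Section~3 the paper's $x_n=x\restriction\delta_n$ is a \emph{partial} function, undefined (not zero) on the first $n$ columns, so literally $[x_n\restriction f_\alpha]$ has a different domain than $A_\alpha$; the object you actually describe (zero-filled on the first $n$ columns) is the paper's $y_n$, and with that reading your identity and the rest of the proof are exactly right.
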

\begin{proof}
Let $y_n$ be defined as $x$ from the $n$'th column onward, and be all zeros in the first $n$ columns. Then $y_n\in V[x_n]$ and $A_\alpha=[y_n\restriction f_\alpha]$.
We conclude that $A$ is in $V[x_n]$, for each $n$, and therefore $A$ is in $M$.
\end{proof}
\begin{lem}\label{lem;no-choice-b-E0-classes}
There is no choice function for $A$ in $M$.
\end{lem}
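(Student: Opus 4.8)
The plan is to argue by contradiction: assume $g \in M$ is a choice function for $A$, so $g(\alpha) \in A_\alpha = [x\restriction f_\alpha]$ for every $\alpha < \kappa$, and exploit that $g \in M$ forces $g \in V[x_n]$ for \emph{every} $n$ simultaneously. The first move is to locate, for each $n$, the index set where the chosen representative has settled into its genuine tail. Concretely, define
\[
B_n = \set{\alpha < \kappa}{g(\alpha)\text{ agrees with } x\restriction f_\alpha \text{ on every column } i \geq n}.
\]
Since both $g$ and $x\restriction f_\alpha$ restricted to columns $\geq n$ are computable from $x_n$, we have $B_n \in V[x_n]$. As each $g(\alpha)$ is $E_0$-equivalent to $x\restriction f_\alpha$, it differs from it in only finitely many cells, hence in only finitely many columns, so every $\alpha$ lies in $B_n$ for all large $n$; thus the $B_n$ are increasing with $\bigcup_n B_n = \kappa$. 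Because $\kappa = \mathfrak{b}$ is regular and uncountable, some $B_{n_0}$ is cofinal in $\kappa$, and then $\seqq{f_\alpha}{\alpha \in B_{n_0}}$ is still $<^\ast$-unbounded. Exactly as in the proof of Claim~\ref{claim: complete classification of E1}, there is then a $k' \geq n_0$ such that $\set{f_\alpha(i)}{\alpha \in B_{n_0}}$ is unbounded in $\omega$ for every $i \geq k'$.

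The heart of the argument is a reconstruction step producing an object in a tail model that cannot contain it. For $\alpha \in B_{n_0}$ the representative $g(\alpha)$ carries the \emph{genuine} initial segment $\seqq{x(i,j)}{j < f_\alpha(i)}$ on every column $i \geq n_0$, with no modifications. The naive reconstruction $\mathrm{col}_i = \bigcup_{\alpha \in B_{n_0}} \big(g(\alpha)\restriction \text{column } i\big)$ recovers column $i$ for $i \geq n_0$, but it is carried out in $V[x_{n_0}]$, which already contains those columns, so it is useless. The plan is instead to reconstruct \emph{one column lower than the model sees}: work in $V[x_{n_0+1}]$, which contains $g$ (as $g \in M$) but \emph{not} column $n_0$, and recover $\seqq{x(n_0,j)}{j}$ — or at least its $E_0$-class — by a cofinal-majority vote over $\set{g(\alpha)(n_0,j)}{\alpha,\ f_\alpha(n_0) > j}$. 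The genuine value $x(n_0,j)$ is cofinally represented (the $\alpha \in B_{n_0}$ with $f_\alpha(n_0) > j$ are cofinal), so if for cofinitely many $j$ the opposite value is \emph{not} cofinally represented, the vote reconstructs $[\mathrm{col}_{n_0}]_{E_0}$ inside $V[x_{n_0+1}]$, contradicting that a single Cohen real (column $n_0$ over $V[x_{n_0+1}]$) adds no new $E_0$-class. In the clean sub-case where the largest modified column of $g(\alpha)$ is bounded uniformly in $\alpha$, say by $N$, this is immediate: then $g(\alpha)$ is genuine on column $N+1$ for \emph{all} $\alpha$, and $\bigcup_\alpha g(\alpha)\restriction\text{column } N{+}1$ reconstructs column $N+1$ inside $V[x_{N+2}]$, the desired contradiction.

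I expect the main obstacle to be ruling out an \emph{adversarially spread-out} choice function, in which for each column $n_0$ there are infinitely many heights $j$ where cofinally many $\alpha$ modify the cell $(n_0,j)$, making the majority vote ambiguous and defeating the reconstruction at every single column. The resolution I would pursue is to use that $g$ lies in $V[x_m]$ for \emph{all} $m$ at once, not just at the level $n_0$: such spreading would force the $\kappa$-sequence $\seqq{g(\alpha)\restriction(\text{columns} < m)}{\alpha<\kappa}$ to encode the full initial columns $0,\dots,m-1$ (since the $f_\alpha(i)$ are unbounded there), and this sequence is an element of $V[x_m]$, contradicting that columns below $m$ are mutually Cohen-generic over $V[x_m]$ and hence not reconstructible from it. Making this precise — pinning down that coherence across all tail models is incompatible with the modifications being cofinally dense at every cell — is the delicate point, and I would carry it out by a density argument at the level of $\mathbb{P}^m$-names (or, equivalently, a homogeneity argument using bit-flip and finite column-permutation automorphisms of $\mathbb{P}$, which fix $M$ and each $A_\alpha$ while detecting genuine dependence of $g$ on a low column), thereby forcing the reconstruction back into the clean bounded sub-case on a cofinal set of indices.
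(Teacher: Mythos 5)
Your ``clean sub-case'' is fine (modulo an indexing slip: the vote must be run at a column $\geq k'$, where pointwise unboundedness of the $f_\alpha$'s is available, not at $n_0$ itself), but it is not where the content of the lemma lies. The entire difficulty is the spread-out case, which you correctly identify and then do not resolve. The specific claim you make there --- that spreading forces $\seqq{g(\alpha)\restriction(\text{columns}<m)}{\alpha<\kappa}$ to encode columns $0,\dots,m-1$ --- is unjustified. For instance, partition $\kappa$ into countably many cofinal pieces and let $g(\alpha)$ be $x\restriction f_\alpha$ with exactly one cell flipped, the cell determined by the piece containing $\alpha$: then at every cell both values are represented cofinally and the cell-wise vote is ambiguous everywhere. (In this particular configuration the column can still be recovered by examining pairs of cells, since no single $\alpha$ flips two cells; but that only shows recoverability is a delicate, configuration-dependent matter --- it is not a consequence of spreading, and nothing you wrote rules out configurations defeating every reconstruction procedure.) Your fallback, ``a density argument at the level of $\mathbb{P}^m$-names,'' is not a patch for this case: it is the entire proof, and you have not carried it out. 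Also, finite column permutations do not fix the $A_\alpha$'s --- the functions $f_\alpha$ are fixed, so permuting columns moves $x\restriction f_\alpha$ off its $E_0$-class --- so that part of your proposed homogeneity toolkit is not available as stated.

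The paper's proof avoids your case split entirely by never attempting a semantic reconstruction inside a tail model. Working in $V[x]$ one gets, much as you do, an unbounded $X\subseteq\kappa$ and $k<\omega$ such that $x(n,m)=a_\alpha(n,m)$ for all $\alpha\in X$, $n>k$, $m<f_\alpha(n)$. The key step, which replaces your vote, is to pull this to the ground model: since $\mathbb{P}$ is countable, there are in $V$ an unbounded $Y\subseteq\kappa$ and a \emph{single} condition $p$ with $p\force\check{\alpha}\in\dot{X}$ for every $\alpha\in Y$. Below $p$, correctness of the voters $\alpha\in Y$ at the relevant cells is \emph{forced}, not merely cofinally true, so the ambiguity that defeats your vote never arises. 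One then picks $n>k$ with column $n$ disjoint from $\dom p$ and with $\set{f_\alpha(n)}{\alpha\in Y}$ unbounded, uses that the choice function lies in $M\subseteq V[x_{n+1}]$ to fix a $\mathbb{P}^{n+1}$-name $\dot{a}^{n+1}$ for it, extends $p$ to $q$ forcing $\dot{a}^{0}[\dot{x}]=\dot{a}^{n+1}[\dot{x}_{n+1}]$, and picks $(n,m)\notin\dom q$ and $\alpha\in Y$ with $f_\alpha(n)>m$. Any $r\in\mathbb{P}^{n+1}$ compatible with $q$ and deciding $\dot{a}^{n+1}_\alpha(n,m)$ yields a condition $q\cup r$ which decides the value $x(n,m)$ even though $(n,m)\notin\dom(q\cup r)$ --- contradicting genericity at a single cell. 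So the contradiction is with genericity of $x$, not with non-reconstructibility of an $E_0$-class inside $V[x_{n_0+1}]$; this forcing step is exactly what your outline is missing.
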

\begin{proof}
Assume towards a contradiction that $\seqq{a_\alpha}{\alpha<\kappa}\in M$ where $a_\alpha\in A_\alpha$. Then $[a_\alpha]=[x\restriction f_\alpha]$ for any $\alpha<\kappa$.
Working in $V[x]$, there is an unbounded $X\subset\kappa$ and $k<\omega$ such that for all $n>k$, $\alpha\in X$ and $m<f_\alpha(n)$, $x(n,m)=a_\alpha(n,m)$.

We can now find in the ground model an unbounded $Y\subset\kappa$ and $\set{p_\alpha}{\alpha\in Y}$ such that $p_\alpha\force \alpha\in\dot{X}$. 
By thinning out $Y$, we may assume that there is a fixed $p\in\mathbb{P}$ such that $p_\alpha=p$ for all $\alpha\in Y$.

As in the proof of Claim~\ref{claim: complete classification of E1}, fix $n>k$ such that $\set{f_\alpha(n)}{\alpha\in Y}$ is unbounded, and $\set{(n,m)}{m<\omega}$ is disjoint from the domain of $p$ and.
Let $\dot{a}^l$ be a $\mathbb{P}^{l}$-name such that $\dot{a}^l[x_{l}]=\seqq{a_\alpha}{\alpha<\kappa}$, for $l\in\{0,n+1\}$.
Take $q\in\mathbb{P}$ extending $p$ and forcing that $\dot{a}^0[\dot{x}]=\dot{a}^{n+1}[\dot{x}_{n+1}]$.
Fix some $m$ such that $(n,m)$ is not in the domain of $q$, and $\alpha\in Y$ such that $f_\alpha(n)>m$.
Let $r$ be a condition in $\mathbb{P}^{n+1}$ deciding $\dot{a}^{n+1}_\alpha(n,m)$ which is compatible with $q$.
Now the condition $r\cup q$ decides $x(n,m)=a_\alpha(n,m)$, yet $(n,m)$ is not in the domain of $r\cup q$, a contradiction.
\end{proof}


We now go on to prove parts (2) and (3) of Theorem~\ref{thm;E_1-classification}.

\begin{defn}
Given a sequence $\seqq{p_n}{n<\omega}$ such that $p_n$ is a condition in $\mathbb{P}^n$, the \textbf{diagonal} of the sequence $\seqq{p_n}{n<\omega}$ is the function $d\in\omega^\omega$ defined by $d(n)=$ the maximal $l$ such that $p_m(n,l)$ is defined, for some $m\leq n$.
\end{defn}
\begin{claim}
If $\seqq{p_n}{n<\omega}$ is in $M$, then the diagonal $d$ is also in $M$.
\end{claim}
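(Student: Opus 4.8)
The plan is to observe that $d$ is obtained from the sequence $s:=\seqq{p_n}{n<\omega}$ by an explicit, absolute definition, so that $d$ lands in any transitive $\mathrm{ZF}$-model containing $s$; since $s\in M$ and $M\models\mathrm{ZF}$ (by \cite[4.2.9]{Larson_Zapletal_2020}), this will give $d\in M$.

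First I would record that the defining formula for $d$ is \emph{local} in the sequence: for each fixed $n$, the value $d(n)=\max\set{l}{\exists m\leq n\ (n,l)\in\dom(p_m)}$ (with the convention that the maximum of $\es$ is $0$) depends only on the finite initial segment $\seq{p_0,\dots,p_n}$. Since each $p_m$ is a finite partial function, the set $\set{l}{\exists m\leq n\ (n,l)\in\dom(p_m)}$ is a finite subset of $\omega$, so the maximum is well defined and is computed by a finite arithmetic operation on the domains of $p_0,\dots,p_n$. The point to stress is that this computation requires no information beyond $s$ itself; in particular it does not use the generic real $x$ or any tail $x_k$.

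I would then conclude exactly as in the proof of Proposition~\ref{prop;DC}. Because $s\in M=\bigcap_{k<\omega}V[x_k]$, we have $s\in V[x_k]$ for every $k$; as the map $n\mapsto d(n)$ is definable from $s$ by the formula above, $d\in V[x_k]$ for every $k$, and hence $d\in\bigcap_{k<\omega}V[x_k]=M$. (Alternatively, and more directly, $M$ is a model of $\mathrm{ZF}$ containing $s$, so applying Replacement to the definable function $n\mapsto d(n)$ yields that its graph is a set of $M$, i.e. $d\in M$.) The only thing that genuinely needs checking — and it is the entirety of the content — is the absoluteness asserted in the first step: that the formula defining $d(n)$ from $s$ is evaluated the same way in $M$, in each $V[x_k]$, and in $V[x]$. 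This is immediate, since the quantifiers range over the finite, explicitly given objects $p_0,\dots,p_n$ and over $\omega$, so the definition is $\Delta_0$ in the parameter $s$ and therefore absolute between transitive $\mathrm{ZF}$-models. Accordingly, I do not anticipate any real obstacle here.
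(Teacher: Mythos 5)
Your proposal is correct, and it is exactly the argument the paper has in mind: the claim is stated without proof precisely because $d$ is $\Delta_0$-definable (hence absolutely definable) from the sequence $\seqq{p_n}{n<\omega}$, so it lies in each $V[x_k]$ containing that sequence and therefore in $M$. Your spelled-out version, including the observation that $d(n)$ depends only on the finite initial segment $\seq{p_0,\dots,p_n}$ and the alternative route via Replacement inside the ZF-model $M$, fills in the omitted details faithfully.
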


We write $y\sqsubset 2^{\omega\times\omega}$ to mean ``$y$ is a partial function $\omega\times\omega\to\{0,1\}$''.
Given $y,z\sqsubset 2^{\omega\times\omega}$, write $z\sqsubseteq y$ if $y$ extends $z$.

\begin{defn}
For $y\sqsubset 2^{\omega\times\omega}$ say that $y$ forces $\psi$, $y\force \psi$, if there is some $p\sqsubseteq y$ such that $p\force\psi$.
\end{defn}
The following two are the key lemmas towards understanding the reals, and other sets in $M$.
\begin{lem}\label{lem;d^Z}
Suppose $Z\in M$, $Z=\dot{Z}[x]$.
Then there is a sequence $\seqq{\tau^{\dot{Z}}_n}{n<\omega}\in M$, $\tau_0^{\dot{Z}}=\dot{Z}$, where $\tau^{\dot{Z}}_n$ is a $\mathbb{P}^n$-name such that $\tau^{\dot{Z}}_n[x_n]=Z$ and there is a function $d^{\dot{Z}}\in\omega^\omega$ in $M$ such that for each $n$,
$x\restriction d^{\dot{Z}}$ force that $\dot{Z}[\dot{x}]=\tau^{\dot{Z}}_n[\dot{x}_n]$.
\end{lem}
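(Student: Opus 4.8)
The plan is to construct both the sequence of names and the function $d^{\dot{Z}}$ by making every choice canonically relative to a fixed well-ordering $\prec$ of $V$, so that the resulting objects are uniformly definable inside each $V[x_m]$ and therefore lie in $M$, and then to force the heights of the witnessing conditions to be controlled by placing those conditions on high columns. First I would fix the names: set $\tau^{\dot{Z}}_0=\dot{Z}$ and, for $n\geq 1$, let $\tau^{\dot{Z}}_n$ be the $\prec$-least $\mathbb{P}^n$-name $\tau$ with $\tau[x_n]=Z$, which exists since $Z\in V[x_n]$. To see $\seqq{\tau^{\dot{Z}}_n}{n<\omega}\in M$, fix $m$: for $n\geq m$ the name $\tau^{\dot{Z}}_n$ is computable in $V[x_m]$, since there $x_n$ is available (as $x_n\in V[x_m]$), $Z\in M\subset V[x_m]$, and the evaluation $\tau\mapsto\tau[x_n]$ together with $\prec$ are definable, so one searches for the $\prec$-least match; the finitely many initial terms are fixed ground-model names, hence elements of $V[x_m]$. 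A sequence with a uniformly definable tail and finitely many specified initial values is in $V[x_m]$, and as $m$ is arbitrary the sequence is in $M$.

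Next I would produce the diagonal from the \emph{consecutive} equalities rather than from the target equalities directly. For each $k$ the statement $\tau^{\dot{Z}}_k[\dot{x}_k]=\tau^{\dot{Z}}_{k+1}[\dot{x}_{k+1}]$ is a $\mathbb{P}^k$-statement holding in $V[x_k]$ (both sides evaluate to $Z$, using $x_{k+1}\in V[x_k]$), so by the truth lemma for $\mathbb{P}^k$ there is a finite $\mathbb{P}^k$-condition below $x_k$ forcing it; let $q_k$ be the $\prec$-least such. Exactly as before, $\seqq{q_k}{k<\omega}\in M$. The crucial point is that $q_k\in\mathbb{P}^k$, so $\dom(q_k)$ lies in columns $\geq k$. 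Writing $h(q_k,a)$ for one more than the largest row used by $q_k$ in column $a$ (and $0$ if none is used), each column $a$ is touched only by the finitely many $q_k$ with $k\leq a$, which lets me define in $M$ the total function
\[ d^{\dot{Z}}(a)=1+\max\{\, h(q_k,a) : k\leq a \,\}. \]

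To verify the forcing clause, fix $n$ and set $p_n=\bigcup_{k<n}q_k$; since all $q_k\subseteq x$ they agree, so $p_n$ is a finite condition extending each $q_k$. By a standard product-forcing observation (the equalities are $\mathbb{P}^k$-statements), $p_n\force\tau^{\dot{Z}}_k[\dot{x}_k]=\tau^{\dot{Z}}_{k+1}[\dot{x}_{k+1}]$ for every $k<n$, and telescoping gives $p_n\force\dot{Z}[\dot{x}]=\tau^{\dot{Z}}_n[\dot{x}_n]$, using $\tau^{\dot{Z}}_0=\dot{Z}$ and $\dot{x}_0=\dot{x}$. Finally, any $(a,b)\in\dom(p_n)$ lies in some $q_k$ with $k\leq a$, so $b<h(q_k,a)<d^{\dot{Z}}(a)$; hence $p_n\sqsubseteq x\restriction d^{\dot{Z}}$, and therefore $x\restriction d^{\dot{Z}}\force\dot{Z}[\dot{x}]=\tau^{\dot{Z}}_n[\dot{x}_n]$, as required.

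I expect the main obstacle to be producing a \emph{single} $d^{\dot{Z}}\in M$ that bounds the needed heights uniformly in $n$: the naive witnesses obtained by applying the truth lemma to $\dot{Z}[\dot{x}]=\tau^{\dot{Z}}_n[\dot{x}_n]$ may use low columns at unbounded heights as $n$ grows. The resolving idea is precisely to extract the witnesses $q_k$ from the one-step equalities $\tau^{\dot{Z}}_k[\dot{x}_k]=\tau^{\dot{Z}}_{k+1}[\dot{x}_{k+1}]$, so that the $k$-th witness can be taken in $\mathbb{P}^k$ and thus avoids the columns below $k$; this is what forces only finitely many witnesses to meet any fixed column and makes the per-column bound finite. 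Canonicity, via the $\prec$-least choices, is then what guarantees that $\seqq{\tau^{\dot{Z}}_n}{n<\omega}$, $\seqq{q_k}{k<\omega}$, and hence $d^{\dot{Z}}$, are definable in every $V[x_m]$ and so belong to $M$.
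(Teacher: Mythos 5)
Your proposal is correct and follows essentially the same route as the paper's proof: the $\prec$-least choices of the names $\tau^{\dot{Z}}_n$ and of the conditions $q_k\in\mathbb{P}^k$ witnessing the one-step equalities $\tau^{\dot{Z}}_k[\dot{x}_k]=\tau^{\dot{Z}}_{k+1}[\dot{x}_{k+1}]$, membership in $M$ via uniform definability of the tails in each $V[x_m]$, and then your per-column bound $d^{\dot{Z}}$, which is exactly the paper's notion of the diagonal of $\seqq{q_k}{k<\omega}$. Your telescoping verification with $p_n=\bigcup_{k<n}q_k$ simply spells out the step the paper summarizes as ``large enough initial segments of $x\restriction d$ force that $\tau_0$ and $\tau_n$ agree.''
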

\begin{proof}
Working in $V[x]$, define the sequence $\seqq{\tau_n}{n<\omega}$ such that $\tau_n$ is the least $\mathbb{P}^n$-name, according to a fixed well order in $V$, with $\tau_n[x_n]=Z$. Note that $\seqq{\tau_n}{n\geq m}$ is definable in $V[x_m]$ using $Z$ and $x_m$. It follows that $\seqq{\tau_n}{n<\omega}$ is in $V[x_m]$, for each $m$, and therefore $\seqq{\tau_n}{n<\omega}$ is in $M$. Without loss of generality $\tau_0=\dot{Z}$.

For each $n$, let $q_n$ be minimal condition in $\mathbb{P}^n$ (according to some fixed well order) such that $q_n\sqsubset x_n$ and  $q_n\force\tau_n[\dot{x}_n]=\tau_{n+1}[\dot{x}_{n+1}]$. Note that $\seqq{q_n}{n\geq m}$ is definable in $V[x_m]$ from $x_m$ and $Z$.
It follows that $\seqq{q_n}{n<\omega}$ is in $M$.
Let $d$ be the diagonal of $\seqq{q_n}{n<\omega}$.
For any $n$, large enough initial segments of $x\restriction d$ force that $\tau_0$ and $\tau_n$ agree. So $d=d^{\dot{Z}}$ and $\tau^{\dot{Z}}_n=\tau_n$ are as desired.
\end{proof}

\begin{lemma}\label{lemma: restriction to dominating function}
Let $Z$ be a set in $M$, and fix a name $\dot{Z}$ for $Z$. 
Given $z=\dot{z}[x]\in M$, there is a function $f^{\dot{z},\dot{Z}}$ in $M$ such that for any $g$, if $g \geq^\ast f^{\dot{z},\dot{Z}}$ and $g\geq d^{\dot{Z}},d^{\dot{z}}$ then \begin{equation*}
    z\in Z\implies x\restriction g\force \dot{z}\in \dot{Z},\, z\notin Z\implies x\restriction g\force \dot{z}\notin\dot{Z}.
\end{equation*}
\end{lemma}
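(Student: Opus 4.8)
The plan is to mimic the construction in Lemma~\ref{lem;d^Z}: reduce the membership statement $\dot z\in\dot Z$ — which a priori concerns the full names evaluated against $x$ — to a statement about the tail names $\tau^{\dot z}_m,\tau^{\dot Z}_m$ evaluated against $x_m$, and then exploit the fact that such a tail statement is decided by a single ground-model condition which can be pushed into arbitrarily high columns. Working in $V[x]$, for each $n$ let $p_n$ be the minimal $\mathbb{P}^n$-condition (in the fixed well-order of $V$) with $p_n\sqsubset x_n$ that decides $\tau^{\dot z}_n\in\tau^{\dot Z}_n$; since $\tau^{\dot z}_n[x_n]=z$ and $\tau^{\dot Z}_n[x_n]=Z$, this decision agrees with the true value of $z\in Z$, so the definition of $p_n$ is uniform in the two cases. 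Exactly as in Lemma~\ref{lem;d^Z}, the tail $\seqq{p_n}{n\geq m}$ is definable in $V[x_m]$ from $x_m$ and the names $\tau^{\dot z}_n,\tau^{\dot Z}_n$ (which lie in $M$), while the finite initial segment $\seqq{p_n}{n<m}$ is a finite sequence of ground-model conditions, hence is in $V$. Thus $\seqq{p_n}{n<\omega}\in V[x_m]$ for every $m$, so $\seqq{p_n}{n<\omega}\in M$, and since the diagonal of an $M$-sequence of conditions is again in $M$, its diagonal $d$ lies in $M$. I would then set $f^{\dot z,\dot Z}(k)=d(k)+1$.

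Now suppose $z\in Z$ (the case $z\notin Z$ is symmetric) and let $g\geq^\ast f^{\dot z,\dot Z}$ with $g\geq d^{\dot Z},d^{\dot z}$ pointwise. The key containment is that $p_m\sqsubseteq x\restriction g$ for all sufficiently large $m$: every cell $(k,l)\in\dom(p_m)$ satisfies $k\geq m$ and $l\leq d(k)<f^{\dot z,\dot Z}(k)\leq g(k)$ once $k$ is past the finite set where $g<^\ast f^{\dot z,\dot Z}$ fails, and $p_m\sqsubset x$. Fix such an $m$. Because $g\geq d^{\dot Z}$ and $g\geq d^{\dot z}$ pointwise, $x\restriction g$ extends both $x\restriction d^{\dot Z}$ and $x\restriction d^{\dot z}$, so by Lemma~\ref{lem;d^Z} there are finite $p^{Z}\sqsubseteq x\restriction d^{\dot Z}\sqsubseteq x\restriction g$ forcing $\dot Z[\dot x]=\tau^{\dot Z}_m[\dot x_m]$ and $p^{z}\sqsubseteq x\restriction d^{\dot z}\sqsubseteq x\restriction g$ forcing $\dot z[\dot x]=\tau^{\dot z}_m[\dot x_m]$. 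Finally, since $p_m$ forces $\tau^{\dot z}_m\in\tau^{\dot Z}_m$ in $\mathbb{P}^m$ and the truth of this depends only on the $\mathbb{P}^m$-generic, the factorization $\mathbb{P}\cong\mathbb{P}_{<m}\times\mathbb{P}^m$ gives $p_m\force_{\mathbb{P}}\tau^{\dot z}_m[\dot x_m]\in\tau^{\dot Z}_m[\dot x_m]$. The three conditions are subfunctions of $x$ lying below $x\restriction g$, so $p:=p^{Z}\cup p^{z}\cup p_m$ is a single finite condition with $p\sqsubseteq x\restriction g$, and $p$ forces $\dot z[\dot x]\in\dot Z[\dot x]$; hence $x\restriction g\force\dot z\in\dot Z$, as required.

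The main obstacle I expect is the bookkeeping required to assemble one finite condition below $x\restriction g$ that forces membership, and this is where the two kinds of domination must be separated carefully. The pointwise bounds $g\geq d^{\dot Z},d^{\dot z}$ are what guarantee that the Lemma~\ref{lem;d^Z} identifications $\dot Z=\tau^{\dot Z}_m$ and $\dot z=\tau^{\dot z}_m$ are inherited wholesale by $x\restriction g$ (a merely eventual bound could miss the low-column support of $p^{Z}$ or $p^{z}$), whereas the eventual bound $g\geq^\ast f^{\dot z,\dot Z}$ is all that is needed — and all that can be hoped for — to catch a single deciding condition $p_m$ sitting in high enough columns. The remaining delicate point is the transfer of the membership fact about $\mathbb{P}^m$-names from $\mathbb{P}^m$ to $\mathbb{P}$, which rests on the standard observation that $\mathbb{P}^m$ is a factor of $\mathbb{P}$ and that $\tau^{\dot z}_m\in\tau^{\dot Z}_m$ is a statement about the tail generic alone.
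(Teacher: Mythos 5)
Your proposal is correct and follows essentially the same route as the paper's proof: choose $p_n\in\mathbb{P}^n$ minimal with $p_n\sqsubset x_n$ deciding $\tau_n^{\dot z}\in\tau_n^{\dot Z}$, observe the sequence lies in $M$, take its diagonal as $f^{\dot z,\dot Z}$, and then use the pointwise bounds $g\geq d^{\dot Z},d^{\dot z}$ to transfer the decision through the identifications $\dot Z=\tau_m^{\dot Z}$, $\dot z=\tau_m^{\dot z}$. Your extra care (the $+1$ in defining $f^{\dot z,\dot Z}$, the explicit union condition, and the product-factorization step moving the decision from $\mathbb{P}^m$ to $\mathbb{P}$) only makes explicit what the paper leaves implicit.
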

By $g\geq d$ we mean that $g(n)\geq d(n)$ for each $n$.
\begin{proof}
Fix $\seqq{\tau_n^{\dot{Z}}}{n<\omega}$, $d^{\dot{Z}}$ and $\seqq{\tau_n^{\dot{z}}}{n<\omega}$, $d^{\dot{z}}$  in $M$ as above.
Let $\seqq{p_n}{n<\omega}$ be in $M$ such that $p_n\in\mathbb{P}^n$ is minimal deciding $\tau_n^{\dot{z}}\in\tau^{\dot{Z}}_n$, $p_n\sqsubset x_n$.
Let $f^{\dot{z},\dot{Z}}\in M$ be the diagonal of $\seqq{p_n}{n<\omega}$.
Given $g$ such that $g\geq^\ast f^{\dot{z},\dot{Z}}$ and $g\geq d^{\dot{Z}},d^{\dot{z}}$, fix $m$ such that $g$ is above $f^{\dot{z},\dot{Z}}$ beyond $m$.
Then for any $n\geq m$, $x\restriction g$ decides correctly $\tau_n^{\dot{z}}\in\tau_n^{\dot{Z}}$.
In addition, $x\restriction g$ extends $x\restriction d^{\dot{Z}}$ and $x\restriction d^{\dot{z}}$, and so forces that $\tau^{\dot{Z}}_0=\tau_n^{\dot{Z}}$ and $\tau^{\dot{z}}_0=\tau_n^{\dot{z}}$.
Therefore $x\restriction g$ decides correctly the statement $\dot{z}\in\dot{Z}$.
\end{proof}
The lemma will be applied first to sets $Z$ which are subsets of $V$. In this case we only consider $z\in V$, and use their canonical names $\check{z}$.

\begin{defn}\label{defn;defining-subsets-of-V}
Suppose $Z\subset U$ is in $M$, with $U\in V$, and $y\sqsubseteq x$. 
Say that $\mathbf{y}$ \textbf{defines Z via} $\mathbf{\dot{Z}}$ if for any $v\in U$, if $v\in Z$ then $y\force v\in\dot{Z}$ and if $v\notin Z$ then $y\force v\notin \dot{Z}$.
We often suppress $\dot{Z}$ if it is clear from context, or irrelevant.
\end{defn}

\begin{remark}
If $y\sqsubseteq x$ defines $Z$ via $\dot{Z}$, and $x'$ is a $\mathbb{P}$-generic extending $y$, then $\dot{Z}[x']=Z$.
\end{remark}
From Lemma~\ref{lemma: restriction to dominating function} we conclude:
\begin{cor}\label{cor;subsets-of-ordinals-in-M}
Suppose $Z\subset\kappa$ is in $M$. Let $f\in\omega^\omega$ be such that
\begin{itemize}
    \item $f$ dominates $\set{f^{\check{\alpha},\dot{Z}}}{\alpha<\kappa}$;
    \item $f$ is pointwise above $d^{\dot{Z}}$.
\end{itemize}
Then $x\restriction f$ defines $Z$.
\end{cor}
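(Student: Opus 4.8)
The plan is to unwind Definition~\ref{defn;defining-subsets-of-V} and reduce the statement to a pointwise application of Lemma~\ref{lemma: restriction to dominating function}. By that definition, to show that $x\restriction f$ defines $Z$ it suffices to prove, for every ordinal $\alpha<\kappa$, that $\alpha\in Z\implies x\restriction f\force\check{\alpha}\in\dot{Z}$ and $\alpha\notin Z\implies x\restriction f\force\check{\alpha}\notin\dot{Z}$. Here, since $U=\kappa\in V$, every element $\alpha$ of $U$ lies in $V$, and I take $\dot{z}=\check{\alpha}$, its canonical name, as indicated in the remark preceding the corollary. Thus the goal is to apply Lemma~\ref{lemma: restriction to dominating function} with $z=\alpha$ and $g=f$, uniformly over all $\alpha<\kappa$.

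The first step is to verify the hypotheses of Lemma~\ref{lemma: restriction to dominating function} for each such $\alpha$. The requirement $g\geq^\ast f^{\check{\alpha},\dot{Z}}$ follows from the first bullet: since $f$ dominates the entire family $\set{f^{\check{\alpha},\dot{Z}}}{\alpha<\kappa}$, it eventually dominates each individual member. The requirement $g\geq d^{\dot{Z}}$ is precisely the second bullet.

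The one remaining hypothesis, $g\geq d^{\check{\alpha}}$, is where I expect to spend the only genuine effort, and it is the reason the corollary's hypotheses need not mention $d^{\check{\alpha}}$: I claim this diagonal is trivial. Tracing Lemma~\ref{lem;d^Z} through the canonical name $\check{\alpha}$, each $\tau_n^{\check{\alpha}}$ may be taken to be $\check{\alpha}$ itself, since $\check{\alpha}$ is already a $\mathbb{P}^n$-name from $V$ with $\check{\alpha}[x_n]=\alpha$ and is minimal in the fixed well order. Consequently the conditions $q_n$ witnessing $\tau_n^{\check{\alpha}}[\dot{x}_n]=\tau_{n+1}^{\check{\alpha}}[\dot{x}_{n+1}]$ may be taken to be empty, so their diagonal $d^{\check{\alpha}}$ is identically $0$. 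Hence $f\geq d^{\check{\alpha}}$ holds automatically, and no domination of $d^{\check{\alpha}}$ by $f$ needs to be assumed.

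With all three hypotheses confirmed, Lemma~\ref{lemma: restriction to dominating function} applied with $z=\alpha$, $\dot{z}=\check{\alpha}$, and $g=f$ yields exactly the two implications displayed above: $x\restriction f$ decides $\check{\alpha}\in\dot{Z}$ correctly according to whether $\alpha\in Z$. As $\alpha<\kappa$ was arbitrary, $x\restriction f$ defines $Z$ via $\dot{Z}$, completing the argument.
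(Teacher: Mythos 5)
Your proof is correct and takes essentially the same approach the paper intends: the paper states this corollary as an immediate consequence of Lemma~\ref{lemma: restriction to dominating function}, applied pointwise to the canonical names $\check{\alpha}$ exactly as you do, and your key observation---that $d^{\check{\alpha}}$ may be taken trivial because $\check{\alpha}$ is a $\mathbb{P}^n$-name for every $n$ and the agreement statements are forced by the empty condition---is the right justification for why the hypotheses omit it. (Only your aside that $\check{\alpha}$ is \emph{minimal} in the fixed well order need not hold, but it is also not needed.)
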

\begin{cor}\label{cor: reals in M}
Suppose $Z\subset\omega$ in $M$ is a real. Then there is $f\in M\cap \omega^\omega$ such that $x\restriction f$ defines $Z$.
\end{cor}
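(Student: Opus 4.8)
The plan is to reduce to Corollary~\ref{cor;subsets-of-ordinals-in-M} in the special case $\kappa=\omega$, and to observe that the one extra ingredient needed there — a single function $f\in M$ witnessing the two bullet conditions — is available for a real precisely because the relevant family of functions is countable. Fix a name $\dot{Z}$ for $Z$ and view $Z$ as a subset of $\omega$, so that $U=\omega$ and the relevant elements $z\in U$ are natural numbers with canonical names $\check{n}$. By Lemma~\ref{lemma: restriction to dominating function}, for each $n<\omega$ there is a function $f^{\check{n},\dot{Z}}\in M$, and by Lemma~\ref{lem;d^Z} there is $d^{\dot{Z}}\in M$.

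First I would check that the whole sequence $\seqq{f^{\check{n},\dot{Z}}}{n<\omega}$ belongs to $M$, not merely each individual term. This follows the now-familiar pattern: fixing a well order in $V$, the construction of $f^{\check{n},\dot{Z}}$ in Lemma~\ref{lemma: restriction to dominating function} is uniform in $n$ and uses only $\seqq{\tau^{\dot{Z}}_k}{k<\omega}$ and $d^{\dot{Z}}$ (both in $M$) together with the canonical names $\check{n}$, so the map $n\mapsto f^{\check{n},\dot{Z}}$ is definable in each $V[x_m]$ from parameters in $M$. Hence the sequence lies in every $V[x_m]$ and therefore in $M$. No form of choice is invoked here, exactly because the assignment is given by a single definable formula.

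Next, working inside $M$, I would diagonalize this countable sequence into one dominating function. Set
\begin{equation*}
    f(k)=1+\max\bigl(\{\,f^{\check{n},\dot{Z}}(k):n\leq k\,\}\cup\{\,d^{\dot{Z}}(k)\,\}\bigr).
\end{equation*}
Then $f\in M$, since it is defined by an explicit formula from $\seqq{f^{\check{n},\dot{Z}}}{n<\omega}$ and $d^{\dot{Z}}$, all of which are in $M$. By construction $f$ is pointwise above $d^{\dot{Z}}$, and for each fixed $n$ we have $f(k)>f^{\check{n},\dot{Z}}(k)$ for every $k\geq n$, so $f\geq^\ast f^{\check{n},\dot{Z}}$; that is, $f$ dominates the set $\set{f^{\check{n},\dot{Z}}}{n<\omega}$ in the required sense.

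Finally I would invoke Corollary~\ref{cor;subsets-of-ordinals-in-M} with $\kappa=\omega$: its two bullet conditions are exactly that $f$ dominates $\set{f^{\check{n},\dot{Z}}}{n<\omega}$ and is pointwise above $d^{\dot{Z}}$, both of which now hold, so $x\restriction f$ defines $Z$. Since $f\in M\cap\omega^\omega$, this is the desired conclusion. The only genuinely non-routine point is the first step — verifying that the countable \emph{sequence} of witnessing functions, and not just its individual terms, is an element of $M$ — and this is handled exactly as in the proofs of Lemmas~\ref{lem;d^Z} and~\ref{lemma: restriction to dominating function}, by appealing to uniform definability across the models $V[x_m]$.
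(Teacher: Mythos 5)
Your overall route matches the paper's—obtain the countable family $\seqq{f^{\check{n},\dot{Z}}}{n<\omega}$ as an element of $M$, diagonalize inside $M$, and apply Corollary~\ref{cor;subsets-of-ordinals-in-M} with $\kappa=\omega$—but the step you single out as the only non-routine one contains a genuine gap, and it is exactly the step the paper handles differently. You assert that the construction in Lemma~\ref{lemma: restriction to dominating function} ``uses only $\seqq{\tau^{\dot{Z}}_k}{k<\omega}$ and $d^{\dot{Z}}$ together with the canonical names $\check{n}$,'' so that the map $n\mapsto f^{\check{n},\dot{Z}}$ is definable in each $V[x_m]$ from parameters in $M$. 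That is not so: the construction also uses the generic $x$ itself, since $f^{\check{n},\dot{Z}}$ is the diagonal of the sequence $\seqq{p_{n,k}}{k<\omega}$, where $p_{n,k}\in\mathbb{P}^k$ is the minimal condition with $p_{n,k}\sqsubset x_k$ deciding $\check{n}\in\tau^{\dot{Z}}_k$. For a single $n$ this dependence is harmless: in $V[x_m]$ one can define the tail $\seqq{p_{n,k}}{k\geq m}$ from $x_m$, and the missing initial segment $\seqq{p_{n,k}}{k<m}$ is a finite sequence of elements of $V$; that is why each individual $f^{\check{n},\dot{Z}}$ lies in $M$. Uniformly in $n$, however, the missing data is the infinite family $\seqq{p_{n,k}}{n<\omega,\,k<m}$, consisting of conditions that are sub-functions of the $x_k$ with $k<m$, which are not available in $V[x_m]$. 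Nothing prevents these conditions, as $n$ varies, from collectively revealing infinitely many values of the columns of $x$ below $m$; an infinite partial sub-function of a generic over $V[x_m]$ can never lie in $V[x_m]$ (a standard density argument), so the sequence you rely on may simply fail to be in $M$. Your explicit remark that ``no form of choice is invoked'' makes this a real gap rather than an omission: the uniform definition you appeal to does not exist.

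The paper closes exactly this gap with DC: Proposition~\ref{prop;DC} shows DC holds in $M$, and since for each $n$ \emph{some} witness as in Lemma~\ref{lemma: restriction to dominating function} exists in $M$, countable choice inside $M$ (available from DC) produces a sequence $\seqq{f^{\check{n},\dot{Z}}}{n<\omega}\in M$, which is then diagonalized just as you do. Alternatively, your definability strategy can be repaired without choice by truncating the construction: for each $n$ let $f_n$ be the diagonal of $\seqq{p_{n,k}}{k\geq n}$ only. The proof of Lemma~\ref{lemma: restriction to dominating function} still yields the correct-decision property for $f_n$, and now $\seqq{f_n}{n<\omega}$ \emph{is} in every $V[x_m]$: its tail $\seqq{f_n}{n\geq m}$ is definable there from $x_m$ and parameters in $M$, while $\seqq{f_n}{n<m}$ is a finite sequence of elements of $V[x_m]$, each $f_n$ being determined by a finite sequence of conditions lying in $V$ together with a $V[x_m]$-definable tail. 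With either fix, your diagonal $f$ and the appeal to Corollary~\ref{cor;subsets-of-ordinals-in-M} with $\kappa=\omega$ go through unchanged.
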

\begin{proof}
Using DC, we may find in $M$ a sequence $f^{\check{n},\dot{Z}}$, each as in Lemma~\ref{lemma: restriction to dominating function}. Let $f\in M$ diagonalize this sequence, with $f\geq f^{\dot{Z}}$.
\end{proof}
So reals in $M$ are determined by $x\restriction f$ for some $f\in M$.
Therefore by changing $x$ above $f$, the realization of a particular real in $M$ in unchanged. We will want to change $x$ on all columns, to another $\P$-generic.
\begin{lem}\label{lem;alteration}
Suppose $x\in 2^{\omega\times\omega}$ is $\P$-generic over $V$, $f\colon\omega\to\omega$ is in $V[x]$.
Then there is some $x'\in 2^{\omega\times\omega}$ such that
\begin{itemize}
    \item $x'$ is $\P$-generic over $V$;
    \item $x$ and $x'$ agree below $f$;
    \item $x \not\mathrel{E_1} x'$.
\end{itemize}
Moreover, both $x,x'$ live in a Cohen-real extension of $V$.
\end{lem}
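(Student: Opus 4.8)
The plan is to realize $x'$ inside a Cohen extension that also contains $x$, by splicing together the part of $x$ below $f$ with a mutually generic real above $f$. Concretely, I would force with $\mathbb{P}\times\mathbb{P}$ to obtain a pair $(x,w)$ of mutually $\mathbb{P}$-generic reals over $V$; since $\mathbb{P}\times\mathbb{P}$ is a countable atomless poset it is forcing equivalent to $\mathbb{P}$, so $V[x,w]$ is a single Cohen-real extension of $V$, and both $x$ and the real $x'$ we are about to build lie in it. Let $R_f=\set{(n,m)}{m\geq f(n)}$ be the region above $f$, the complement of $\dom(x\restriction f)$, and define $x'$ to agree with $x$ on $\dom(x\restriction f)$ and with $w$ on $R_f$. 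Two of the three required properties are then immediate: $x'$ agrees with $x$ below $f$ by construction, and since $w$ is $\mathbb{P}$-generic over $V[x]$ while $f\in V[x]$, for each $n$ the set of conditions forcing $w(n,m)\neq x(n,m)$ for some $m\geq f(n)$ is dense; hence $x'$ differs from $x$ in every column, so $x\not\mathrel{E_1}x'$.

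The substance of the argument is showing that $x'$ is $\mathbb{P}$-generic over $V$. The natural idea is to split $\mathbb{P}$ as the product of the conditions below $f$ with the conditions above $f$, and observe that $x'$ is the below-$f$ part of $x$ together with the above-$f$ part of $w$. This works verbatim when $f\in V$, since then both factors are $V$-generic and mutually generic, so their product is $\mathbb{P}$-generic. The main obstacle is precisely that here $f\in V[x]\setminus V$, so this decomposition is not available in the ground model and the partial function $x\restriction f$ is not manifestly generic.

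To get around this I would argue density in two stages. Fix a dense open $D\subset\mathbb{P}$ in $V$. Working in $V[x]$, where $f$ is a fixed function, let $\mathcal{E}_D$ be the set of finite conditions $t$ with $\dom(t)\subseteq R_f$ such that some $q\in D$ has its below-$f$ part contained in $x\restriction f$ and its above-$f$ part contained in $t$. If the above-$f$ part of $w$ meets $\mathcal{E}_D$, then $x'$ meets $D$; and since $f\in V[x]$, the above-$f$ part of $w$ is generic over $V[x]$ for the poset of finite conditions on $R_f$. So it suffices to prove that $\mathcal{E}_D$ is dense in $V[x]$, which I would deduce from the genericity of $x$ over $V$: given a target finite condition $t_0$ on $R_f$, return to $V$, extend a condition to decide $f\restriction N=\bar f$ and to decide $x$ on the finite below-$\bar f$ region of the first $N$ columns, adjoin $t_0$, and extend by density to some $q\in D$.

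The delicate point, and the crux of the whole lemma, is the behaviour of $q$ on columns $n\geq N$, where $f$ has not been decided and where any below-$f$ coordinate of $q$ must match $x$. The trick is to route \emph{all} of $q$'s coordinates in columns $\geq N$ to $x$: extend the condition so as to force $x(n,m)=q(n,m)$ for every $(n,m)\in\dom(q)$ with $n\geq N$ (first absorbing the ground-model condition's values on those columns into the seed fed to $D$, to keep everything consistent). Crucially, one must \emph{not} try to also pin down $f(n)$ for $n\geq N$: it is irrelevant whether such a coordinate ends up below or above $f$. If it is below $f$ it lies in $x\restriction f$ and matches $q$ by the forcing we just imposed; if it is above $f$ it imposes no constraint on the requirement that the below-$f$ part of $q$ sit inside $x\restriction f$. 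Thus, whatever the uncontrolled values of $f$ on high columns turn out to be, the resulting condition forces that $q$ witnesses $\mathcal{E}_D$, giving density of $\mathcal{E}_D$ and hence genericity of $x'$. Finally, $x,w,x'$ all lie in $V[x,w]$, a single Cohen-real extension of $V$, which yields the ``moreover'' clause.
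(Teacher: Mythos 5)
Your proof is correct, but it follows a genuinely different route from the paper's own argument --- in fact, it is precisely the alternative construction that the paper attributes to Paul Larson in a footnote and states without proof (``It can be shown that $x'$ is $\mathbb{P}$-generic over $V$''). The paper's main proof instead introduces an auxiliary countable poset $\mathbb{Q}$ whose conditions are pairs $(p,q)$ with $p\in\mathbb{P}$ and $q$ a finite partial function such that $p$ forces $q(l)\geq \dot{f}(l)$ on its domain, together with the map $\pi(p,q)$ that flips the values of $p$ along the graph of $q$. One checks that $\pi$ and $(p,q)\mapsto p$ are forcing projections, realizes $(x,g)$ as $\mathbb{Q}$-generic over $V$, and sets $x'=\pi(x,g)$; genericity of $x'$ then comes for free from the projection machinery, and $x \not\mathrel{E_1} x'$ is immediate since $x'$ differs from $x$ in exactly one entry of every column. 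What the paper's route buys is that all combinatorial verification is packaged into the single assertion that $\pi$ is a projection, and the alteration of $x$ is minimal (one flipped bit per column). What your route buys is self-containedness, plus the content the footnote omits: your two-stage density argument --- reducing genericity of $x'$ over $V$ to density of $\mathcal{E}_D$ in the above-$f$ Cohen poset over $V[x]$, and then forcing that density from $V$ by deciding $f$ and $x$ only on the first $N$ columns while routing every coordinate of the target condition $q$ on columns $\geq N$ into $x$ itself --- is exactly what is needed to substantiate the footnote's claim, and you correctly identify the crux, namely that one must not attempt to decide $f$ on the high columns. Both proofs obtain the ``moreover'' clause the same way, by placing $x$ and $x'$ in a single extension of $V$ by a countable (hence Cohen-equivalent) forcing.
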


\begin{proof}\footnote{Another way of finding $x'$ as in the lemma, due to Paul Larson, is as follows. Let $y\in 2^{\omega\times\omega}$ be $\P$-generic over $V[x]$, and define $x'$ to agree with $x$ below the graph of $f$, and to agree with $y$ otherwise. It can be shown that $x'$ is $\P$-generic over $V$.}
Let $\dot{f}$ be a name such that $\dot{f}[x]=f$.
Define a poset $\mathbb{Q}$ as follows.
The conditions are pairs $(p,q)$ where $p\in\mathbb{P}$, $q\colon \dom q\to\omega$ is a finite function such that $p\force q(l)\geq \dot{f}(l)$ for any $l\in\dom q$.
Say that $(p,q)$ extends $(p',q')$ if $p\leq p'$, $q$ extends $q'$ as a function, and for any $i\in\dom q\setminus \dom q'$, $(i,q(i))$ is above the domain of $p'$. (That is, if $(i,j)\in\dom p'$ then $j<q(i)$.)
Note that $\Q$ is countable and therefore forcing equivalent to Cohen-real forcing.

Define $\pi\colon\mathbb{Q}\to\mathbb{P}$ as follows. $\pi(p,q)=r$ where $\dom r=\dom p$ and
\begin{equation*}
    r(i,j) = \begin{cases}
               1-p(i,j)               & i\in\dom q \wedge j=q(i);\\
               p(i,j)               & \text{otherwise}.
           \end{cases}
\end{equation*}
That is, $r$ flips the values of $p$ along the graph of $q$, and is equal to $p$ elsewhere.
It follows from the definitions that $\pi$ is a forcing projection (see \cite[5.2]{Cummings2010-handbook}). 

A $\mathbb{Q}$-generic is naturally associated with a pair $(y,g)$ such that $y\in 2^{\omega\times\omega}$ is $\mathbb{P}$-generic and $g\colon\omega\to\omega$ is a function which is pointwise above $\dot{f}[y]$.
Note also that the map $\pi_0\colon \mathbb{Q}\to\mathbb{P}$ defined by $(p,q)\mapsto p$ is a projection, so we may find $g$ so that $(x,g)$ is $\mathbb{Q}$-generic over $V$.
Let $x'=\pi(x,g)$, a $\mathbb{P}$-generic over $V$.
Then $x$ and $x'$ agree everywhere but on the graph of $g$. Since $g$ is pointwise above $f=\dot{f}[x]$, $x$ and $x'$ satisfy the desired properties.
\end{proof}

We now prove part (2) of Theorem~\ref{thm;E_1-classification}.
\begin{prop}
There is no generic classification of $E_1$ using sequences of $E_0$-classes of length $<\add$. 
\end{prop}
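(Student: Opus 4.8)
The plan is to argue by contradiction: assume $c$ is a generically absolute classification of $E_1$ (Definition~\ref{defn;generic-classification}) whose invariants are $\lambda$-sequences of $E_0$-classes with $\lambda<\add$, with defining formula $\psi$ and parameter $b\in V$. I will first show that the invariant of the generic real lands in $M$, then that it is ``captured'' below a single function $g\in M$, and finally use the alteration Lemma~\ref{lem;alteration} to produce a non-$E_1$-equivalent generic with the same invariant, contradicting completeness in a Cohen extension.

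\emph{Step 1: $c(x)\in M$.} For each $n$ we have $x\mathrel{E_1}x_n$. Working in $V[x_n]$, Invariance absoluteness gives a well-defined value $A_n=c^{V[x_n]}(x_n)\in V[x_n]$ with $\psi^{V[x_n]}(x_n,A_n,b)$; by Locality applied to $V[x_n]\subset V[x]$ we get $\psi^{V[x]}(x_n,A_n,b)$, so $c^{V[x]}(x_n)=A_n$, and by $E_1$-invariance in $V[x]$, $c(x)=c(x_n)=A_n\in V[x_n]$. As this holds for every $n$, $c(x)\in\bigcap_n V[x_n]=M$. Write $Z:=c(x)\in M$ and let $\dot C$ be the canonical $\P$-name with $\dot C[x']=c(x')$ for every $\P$-generic $x'$.

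\emph{Step 2: capturing $Z$ below a single $g\in M$.} Each coordinate $Z_\alpha=c(x)_\alpha\in M$ is a nonempty countable $E_0$-class, and since $M$ is transitive all its members lie in $M$. Crucially I must not choose representatives across the $\lambda$ coordinates, since this is exactly the choice function ruled out by Lemma~\ref{lem;no-choice-b-E0-classes}. Instead, for a fixed $\alpha$ I capture the \emph{whole} (countable) class: using DC in $M$ (Proposition~\ref{prop;DC}) enumerate $Z_\alpha$ canonically, e.g. ordering its members by their least $\P$-names, and let $g_\alpha\in M$ be the diagonal of the corresponding capturing functions $f^{\dot r,\dot C}$ from Lemma~\ref{lemma: restriction to dominating function} (a diagonal over countably many functions needs no choice). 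Then any $\P$-generic $x'$ with $x'\restriction g_\alpha=x\restriction g_\alpha$ forces each member $r$ of $Z_\alpha$ into the $\alpha$-th coordinate of $\dot C$; since $c(x')_\alpha$ is itself an $E_0$-class sharing the member $r$ with $Z_\alpha$, we get $c(x')_\alpha=Z_\alpha$. The assignment $\alpha\mapsto g_\alpha$ is definable, so $\seqq{g_\alpha}{\alpha<\lambda}\in M$. Because $\lambda<\add=\mathfrak{b}^M$ (the latter equality by Cichon--Pawlikowski), this $\lambda$-sequence is $\le^\ast$-bounded in $M$ by some $g\in M$, which I also take pointwise above $d^{\dot C}$. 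Then any $\P$-generic $x'$ with $x'\restriction g=x\restriction g$ satisfies $c(x')_\alpha=Z_\alpha$ for all $\alpha$, i.e. $c(x')=Z=c(x)$.

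\emph{Step 3: alteration and contradiction.} Apply Lemma~\ref{lem;alteration} to $x$ and $g\in M\subset V[x]$ to obtain $x'$, $\P$-generic over $V$, with $x'\restriction g=x\restriction g$ and $x\not\mathrel{E_1}x'$, both living in a single Cohen extension $W$ of $V$. By Step~2, $c(x')=c(x)$ in $W$; but by Generic absoluteness $c$ is a complete classification in $W$, so $c(x')=c(x)$ forces $x\mathrel{E_1}x'$ --- a contradiction. The heart of the argument is Step~2: the invariant is only a sequence of $E_0$-classes and no uniform choice of representatives is available, so the capturing must be done class-by-class and then assembled. The role of $\add$ is precisely to guarantee, via $\mathfrak{b}^M=\add^V$, that the $\lambda<\add$ canonically-defined per-coordinate capturing functions can be simultaneously dominated inside $M$; were $\lambda$ allowed to reach $\mathfrak{b}^M$ this bounding would fail, matching the upper bound of Theorem~\ref{thm;E_1-classification}(1).
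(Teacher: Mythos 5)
Your overall architecture is the paper's: Step 1 and Step 3 are essentially identical to the paper's argument (the invariant lands in $M$; the alteration Lemma~\ref{lem;alteration} plus completeness in a Cohen extension gives the contradiction). Where you diverge is Step 2, and the divergence stems from a misreading: Lemma~\ref{lem;no-choice-b-E0-classes} only rules out a choice function \emph{inside} $M$. The paper makes precisely the choice you forbid yourself, but working in $V[x]$, where AC holds: it picks representatives $b_\alpha\in B_\alpha$ and, for each, a capturing function $f_\alpha$ (via Corollary~\ref{cor: reals in M}), and then bounds $\set{f_\alpha}{\alpha<\kappa}$ \emph{in} $V[x]$, using Cichon--Pawlikowski in the form $\mathfrak{b}^{V[x]}=\add^V$. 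Note that Lemma~\ref{lem;alteration} only needs $g\in V[x]$, not $g\in M$, so nothing forces the bounding into $M$. Your insistence on staying inside $M$ costs you twice: you must build canonical (tail-uniform) enumerations and capturing functions whose $\lambda$-sequence lies in $M$ (you assert this is ``definable,'' but the required uniformity across the models $V[x_n]$ is exactly what needs checking, as in the proofs of Proposition~\ref{prop;DC} and Lemma~\ref{lem;d^Z}), and you must invoke $\mathfrak{b}^M=\add^V$. The latter is only asserted parenthetically in the paper and is not what Cichon--Pawlikowski states, which concerns the choice-model $V[x]$; producing the dominating function \emph{inside} the ZF+DC model $M$ requires a separate argument that you do not supply.

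The genuine gap is in the final inference of Step 2. Lemma~\ref{lemma: restriction to dominating function} (applied, as it must be, with a name $\dot Z_\alpha$ for the $\alpha$-th coordinate of the invariant, not with $\dot C$ itself) gives $x\restriction g_\alpha\force \dot r\in\dot Z_\alpha$, hence for a $\P$-generic $x'$ extending $x\restriction g_\alpha$ it gives $\dot r[x']\in \dot Z_\alpha[x']=c(x')_\alpha$. It does \emph{not} give $r\in c(x')_\alpha$: the realization $\dot r[x']$ need not equal $r$, since $x'$ may differ from $x$ on coordinates above $g_\alpha$ that the name $\dot r$ reads. Without $r\in c(x')_\alpha$ (or at least $\dot r[x']\mathrel{E_0}r$), the step ``$c(x')_\alpha$ and $Z_\alpha$ share the member $r$, hence are equal'' fails. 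The repair is exactly the paper's mechanism: make $g_\alpha$ also dominate a function, given by Corollary~\ref{cor: reals in M}, such that $x\restriction g_\alpha$ \emph{defines} $r$ in the sense of Definition~\ref{defn;defining-subsets-of-V}; then $\dot r[x']=r$ by the remark following that definition. The paper packages the two requirements as: $x\restriction f_\alpha$ defines $b_\alpha$, and a single finite condition $p$ forces $\dot B_\alpha=[\dot b_\alpha]_{E_0}$, so any generic extending $p$ and agreeing with $x$ below $f_\alpha$ computes $\dot B_\alpha$ as $[b_\alpha]_{E_0}=B_\alpha$. With that repair, and with the bounding moved back to $V[x]$, your argument becomes correct --- and is then the paper's proof.
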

Let $E=E_0$ for this proof. Note that the proof works without change for any countable Borel equivalence relation $E$.
\begin{proof}
Assume for a contradiction that there is such a classification $c\colon (2^\omega)^\omega\to I$, defined by some $\psi(x,B,e)$, where $e$ is a parameter and $I$ is the set of sequences of $E$-classes of length $<\add$.
Let $x\in (2^\omega)^\omega$ be Cohen generic and $c(x)=\seqq{B_\alpha}{\alpha<\kappa}=B$ its invariant, where $\kappa<\mathfrak{b}$ and each $B_\alpha$ is an $E$-class. Then $B$ is in $M$. Indeed, let $y_n$ be defined as $x$ on the $n$'th column forward, and be all $0$'s on the first $n$ columns. Then $y_n\mathrel{E_1}x$ and $y_n\in V[x_n]$. Now $c(x)=c(y_n)\in V[x_n]$ for each $n$, so $B$ is in the intersection model $M$.

Working in $V[x]$, fix $\seqq{b_\alpha}{\alpha<\kappa}$ such that $b_\alpha\in B_\alpha$. Each $b_\alpha$ is a member of a Polish space, and therefore can be identified as a subset of $\omega$.
Since $b_\alpha\in M$, by Corollary~\ref{cor: reals in M}, there is a function $f_\alpha$ such that for any $h$, if $h$ is above $f_\alpha$ then $x\restriction h$ defines $b_\alpha$.


By a result of Cichon and Pawlikowsky \cite{Chichon-Pawlikowski-1986}, the bounding number $\mathfrak{b}$ in a Cohen-real extension $V[x]$ is $\add^V$, the additivity of the meager ideal as calculated in $V$.
Since $\kappa<\add^V$, $\kappa<\mathfrak{b}^{V[x]}$, so in $V[x]$ we may find $g\in\omega^\omega$ which dominates $\set{f_\alpha}{\alpha<\kappa}$.
Fix a condition $p\in\P$ forcing that
$$\psi(\dot{x},\dot{B},\check{e})\wedge \forall\alpha<\kappa (B_\alpha=[\dot{b}_\alpha]_{E}).$$

By Lemma~\ref{lem;alteration} we may find $x'$, in a further generic extension, such that $x'$ is $\mathbb{P}$-generic over $V$, $x'$ and $x$ agree below $g$, yet $x$ and $x'$ are not $E_1$-equivalent. By a finite change of $x'$, we may assume that $x'$ extends $p$.

\begin{claim}
For each $\alpha<\kappa$, $\dot{B}_\alpha[x']=\dot{B}_\alpha[x]=B_\alpha$.
\end{claim}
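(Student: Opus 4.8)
The plan is to reduce the claim about the invariant coordinates $\dot B_\alpha$ to the behaviour of the chosen representatives $\dot b_\alpha$, and then to use the control over reals of $M$ furnished by Corollary~\ref{cor: reals in M}. The equality $\dot B_\alpha[x] = B_\alpha$ is immediate, since $\dot B[x] = B = \seqq{B_\alpha}{\alpha<\kappa}$. For the nontrivial equality $\dot B_\alpha[x'] = B_\alpha$, recall that $x'$ extends $p$, and $p$ forces that the $\alpha$-th coordinate of the invariant equals the $E$-class of $\dot b_\alpha$; evaluating at the generic $x'$ gives $\dot B_\alpha[x'] = [\dot b_\alpha[x']]_E$. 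Since $b_\alpha \in B_\alpha$ we have $[b_\alpha]_E = B_\alpha$, so the whole claim reduces to the single statement $\dot b_\alpha[x'] = b_\alpha$ (for which I will in fact prove exact equality, not merely $E$-equivalence).

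To establish $\dot b_\alpha[x'] = b_\alpha$ I would invoke the remark following Definition~\ref{defn;defining-subsets-of-V}: if $y \sqsubseteq x$ defines $b_\alpha$ via $\dot b_\alpha$ and $x'$ is a $\P$-generic extending $y$, then $\dot b_\alpha[x'] = b_\alpha$. By the choice of $g$ (dominating each $f_\alpha$), the partial function $x\restriction g$ defines $b_\alpha$ via $\dot b_\alpha$ for every $\alpha < \kappa$. Thus everything comes down to verifying that $x'$ extends $x\restriction g$, i.e. that $x'$ and $x$ really agree on all coordinates $(n,m)$ with $m < g(n)$.

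The main obstacle is precisely this last point, because the finite modification of $x'$ that was used to arrange that $x'$ extends $p$ could, a priori, alter $x'$ below $g$ and so destroy the equality $\dot b_\alpha[x'] = b_\alpha$. I would resolve this by showing the modification can be confined to coordinates above $g$. Indeed, $x$ extends $p$ and $x'$ already agrees with $x$ below $g$, so for $(n,m) \in \dom p$ with $m < g(n)$ we have $x'(n,m) = x(n,m) = p(n,m)$; hence $x'$ and $p$ can disagree only at the finitely many $(n,m) \in \dom p$ with $m \geq g(n)$. Resetting $x'$ to agree with $p$ at exactly those coordinates is a finite change living entirely in the region $m \geq g(n)$, so it leaves $x\restriction g = x'\restriction g$ untouched; the resulting $x'$ therefore still extends $x\restriction g$. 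As a finite change of a $\P$-generic affecting only finitely many columns, $x'$ remains $\P$-generic over $V$ and stays $E_1$-inequivalent to $x$, so none of the other requirements on $x'$ are lost. With $x'$ extending $x\restriction g$ in hand, the remark gives $\dot b_\alpha[x'] = b_\alpha$, and the reduction of the first paragraph finishes the claim.
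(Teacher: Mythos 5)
Your reduction of the claim to the computation $\dot B_\alpha[x']=[\dot b_\alpha[x']]_E$ via the condition $p$, and your observation that the finite change making $x'$ extend $p$ can be confined to coordinates above $g$ (since $x$ and $x'$ already agree with $p$ below $g$), are both correct. The gap is the step asserting that $x\restriction g$ defines $b_\alpha$ via $\dot b_\alpha$ for every $\alpha<\kappa$. What Corollary~\ref{cor: reals in M} actually provides is that $x\restriction h$ defines $b_\alpha$ whenever $h$ is \emph{pointwise} above $f_\alpha$: unwinding Lemmas~\ref{lem;d^Z} and~\ref{lemma: restriction to dominating function}, $h$ must lie pointwise above the diagonal $d^{\dot b_\alpha}$, because $x\restriction h$ has to extend $x\restriction d^{\dot b_\alpha}$ in order to force the agreement between $\dot b_\alpha$ and the tail names $\tau^{\dot b_\alpha}_n$; only the part coming from the functions $f^{\check n,\dot b_\alpha}$ can be relaxed to eventual domination. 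But $g$ is produced from $\kappa<\mathfrak{b}^{V[x]}$, so it only \emph{eventually} dominates each $f_\alpha$, and nothing lets you choose $g$ pointwise above all of them (the values $\set{f_\alpha(n)}{\alpha<\kappa}$ may be unbounded in $\omega$ at some, even every, column $n$). At the finitely many columns where $f_\alpha(n)>g(n)$, $x'$ may disagree with $x$ below $f_\alpha$, and ``defines'' is not stable under deleting finitely many coordinates: the name $\dot b_\alpha$ can be such that its evaluation is flipped by a single coordinate lying below $d^{\dot b_\alpha}$ (and Lemma~\ref{lem;alteration} may flip exactly such a coordinate). So the exact equality $\dot b_\alpha[x']=b_\alpha$, on which your whole argument rests, is not just unproved --- it can genuinely fail.

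The repair requires precisely the ingredient your proof never invokes: invariance absoluteness, clause (3)(a) of Definition~\ref{defn;generic-classification}. For each fixed $\alpha$, change $x'$ at the finitely many offending coordinates to obtain $y=y_\alpha$ which still extends $p$, still agrees with $x$ below $g$, and moreover agrees with $x$ below $f_\alpha$ (possible exactly because $g\geq^\ast f_\alpha$). Then $\dot b_\alpha[y]=b_\alpha$, hence $\dot B_\alpha[y]=[b_\alpha]_E=B_\alpha$ since $y$ extends $p$; and since $y\mathrel{E_1}x'$ and $\psi$ defines an $E_1$-invariant map in every extension, $\dot B_\alpha[x']=\dot B_\alpha[y]=B_\alpha$. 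Note that the modification is different for each $\alpha$ and cannot be carried out simultaneously by a single finite change, which is why this detour through $E_1$-invariance is unavoidable rather than a stylistic choice.
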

\begin{proof}
Fix $\alpha<\kappa$. Define $y\in 2^{\omega\times\omega}$ by changing finitely many coordinates of $x'$, so that $y$ still extends $p$ and agrees with $x$ below $g$, and moreover $y$ and $x$ agree below $f_\alpha$. (This is possible since $g$ eventually dominates $f_\alpha$.)
Then $y$ is $\P$-generic over $V$ and $\dot{b}_\alpha[y]=\dot{b}_\alpha[x]=b_\alpha$, by the choice of $f_\alpha$.
Furthermore, $y$ and $x'$ are $E_1$-related. Since the map $c$ is a generic classification, $\dot{B}[y]=\dot{B}[x']$. In particular, $\dot{B}_\alpha[x']=\dot{B}_\alpha[y]=[b_\alpha]_E=B_\alpha$.
\end{proof}
It follows that $c(x)=B=c(x')$.
In conclusion, we found $x$ and $x'$ in a Cohen-real generic extension which are not $E_1$-equivalent, yet they are assigned the same invariant by $c$. Thus $c$ is not a complete classification in this extension, so $c$ is not a generically absolute classification.
\end{proof}

The following establishes part (3) of Theorem~\ref{thm;intersection-model}.
Recall that, working in some generic extension of $V$, given a set $A$, $V(A)$ is the minimal transitive extension of $V$ which satisfies ZF. Showing that a model $M$ is of the form $V(A)$ for a set $A$, where $V$ is a model of ZFC, can be seen as some weak fragment of choice for $M$. If $A$ is a set of ordinals, then $V(A)$ satisfies the axiom of choice. The next level of complexity, in which choice can fail, is when $A$ is a set of reals (a set of subsets of $\omega$).
\begin{thm}
Let $\mathcal{F}=(\omega^\omega)^M$ and $D=\set{[x\restriction f]}{f\in\mathcal{F}}$.
Then
\begin{equation*}
    M=V(D).
\end{equation*}
\end{thm}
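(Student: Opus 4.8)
The plan is to prove the two inclusions $V(D) \subseteq M$ and $M \subseteq V(D)$ separately. The first is the easy direction: since each $x \restriction f$ for $f \in \mathcal{F} = (\omega^\omega)^M$ lies in $M$ (by Remark~\ref{remark: stuff in M}, as $f \in M$), every element of $D$ is in $M$, and the sequence-coding observations used throughout (e.g.\ the argument that $A \in M$) show $D$ itself is in $M$. As $M$ is a transitive model of ZF containing $V$ and containing $D$, minimality of $V(D)$ gives $V(D) \subseteq M$.

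The substantive direction is $M \subseteq V(D)$. Here I would use the structural results about reals and general sets in $M$ established above. The key is that $V(D)$, being a ZF model extending $V$ and containing $D$, has access to all of $V$, to each $x \restriction f$ (for $f \in \mathcal{F}$), and to the forcing relation $\force$ for $\mathbb{P}$ (which is definable in $V$). First I would handle subsets of ordinals: given $Z \subseteq \kappa$ in $M$, Corollary~\ref{cor;subsets-of-ordinals-in-M} says that $x \restriction f$ defines $Z$ via $\dot Z$, for a suitable $f \in \mathcal{F}$ dominating the relevant $f^{\check\alpha,\dot Z}$ and lying above $d^{\dot Z}$. The point is that from the data $x \restriction f \in D$, the name $\dot Z \in V$, and the ground-model forcing relation, one can reconstruct $Z$ inside $V(D)$ by the rule $\alpha \in Z \iff (x\restriction f) \force \check\alpha \in \dot Z$ (using Definition~\ref{defn;defining-subsets-of-V}). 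All ingredients of this computation are available in $V(D)$, so $Z \in V(D)$. The same template, via Lemma~\ref{lemma: restriction to dominating function} in its general form, handles any $z = \dot z[x] \in M$ whose membership in a fixed $Z$ is at stake, so every real in $M$ lands in $V(D)$.

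To pass from ``every real (subset of ordinals) in $M$ is in $V(D)$'' to ``every set in $M$ is in $V(D)$'' I would argue by $\in$-induction (transfinite recursion on rank), which is legitimate since both $M$ and $V(D)$ satisfy ZF and in particular Foundation and Replacement. The delicate issue is that an arbitrary set $Z \in M$ need not be coded by a single $x \restriction f$; rather one must witness, for each element, a name and a dominating function. Concretely, given $Z \in M$ with name $\dot Z$, I would use DC in $M$ (Proposition~\ref{prop;DC}) to select, uniformly, the sequence of auxiliary functions $f^{\dot z, \dot Z}$ and diagonals $d^{\dot Z}$ needed to decide membership, exactly as in the proof of Corollary~\ref{cor: reals in M}; the resulting coding function lies in $\mathcal{F}$, hence its associated restriction lies in $D$. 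Then, assuming inductively that every element of $Z$ (of smaller rank) is already in $V(D)$, one defines $Z$ in $V(D)$ as the set of those previously-constructed elements $w$ such that the appropriate $x \restriction g$ forces the name of $w$ into $\dot Z$.

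The main obstacle I expect is the bookkeeping in this last step: ensuring that the decision procedure ``$w \in Z \iff x\restriction g \force \dot w \in \dot Z$'' can be run \emph{uniformly} inside $V(D)$ over all candidate elements $w$, including the choice of names $\dot w$ and the verification that a single $g \in \mathcal{F}$ (or a $D$-indexed family of them) suffices. This is where Lemma~\ref{lemma: restriction to dominating function} and the locality clause (clause (2) of Definition~\ref{defn;generic-classification}) do the real work: locality guarantees that the forcing computation performed in $V(D)$ agrees with the truth in $M = V[x] \cap \cdots$, and the domination argument (using that $\mathcal{F} = (\omega^\omega)^M$ is closed under the diagonalizations appearing in the lemmas) guarantees that a correct deciding condition always exists within the data available to $V(D)$. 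Once uniformity is secured, Replacement in $V(D)$ assembles $Z$, completing the induction and the inclusion $M \subseteq V(D)$.
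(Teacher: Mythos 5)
Your first inclusion $V(D)\subseteq M$ is correct and matches the paper. The gap is in the second inclusion, where your plan of reconstructing each $Z\in M$ directly inside $V(D)$ by forcing computations breaks down at exactly the obstacle you flag at the end but do not resolve. First, uniformity: your decision procedure needs the actual restrictions $x\restriction g$ for varying $g\in\mathcal{F}$, i.e.\ the assignment $g\mapsto x\restriction g$. Each individual $x\restriction g$ does lie in $V(D)$ (by transitivity, since $[x\restriction g]\in D$), so it can serve as a parameter for a \emph{single} $Z$; but the assignment itself is a choice of representatives from the classes in $D$, and precisely this kind of choice fails in $M\supseteq V(D)$ (Lemma~\ref{lem;no-choice-b-E0-classes}). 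Nor can you substitute arbitrary members $y\in[x\restriction g]$: a finite modification of $x\restriction g$ is not a restriction of $x$, so $y\force\psi$ no longer implies that $\psi$ is true in $V[x]$, and soundness of the reconstruction is lost. Second, your appeal to Corollary~\ref{cor;subsets-of-ordinals-in-M} overstates it: that corollary is conditional on the existence of $f$ dominating the family $\set{f^{\check\alpha,\dot{Z}}}{\alpha<\kappa}$, and the DC argument of Corollary~\ref{cor: reals in M} produces such an $f\in\mathcal{F}$ only when the family is countable, i.e.\ only for reals; for $\kappa\geq\mathfrak{b}^M$ no such $f\in\mathcal{F}$ need exist. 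Third, in the induction step you must attach to each candidate $w\in V(D)$ a name $\dot{w}$ with $\dot{w}[x]=w$; verifying this evaluation requires $x$, which is not in $V(D)$. Neither DC in $M$ (which handles only countable bookkeeping) nor the locality clause of Definition~\ref{defn;generic-classification} (which concerns classifying maps, not forcing reconstruction) repairs these three points.

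The paper's proof avoids direct reconstruction entirely. It first observes $\mathcal{F}\in V(D)$, then forces over $V[x]$ with a collapse $\mathbb{Q}\in V(D)$ so that, in $V(D)[G]$, members of the classes in $D$ can be glued into a genuine $\mathbb{P}$-generic $y$ over $V$ agreeing with $x$ below a function $g$ dominating all of $\mathcal{F}$ (Claim~\ref{claim;forcing-over-V(D)}). Then $M\subseteq V[y]$ is proved by induction on rank (Lemma~\ref{lem;dominating-defines-M}) --- and that induction succeeds where yours stalls, because $V[y]$ contains an honest generic with which to evaluate names. Finally, the mutual genericity lemma (Lemma~\ref{lem;mutgen}) pulls the conclusion back down: $M\subseteq V[y]\cap M\subseteq V(D)[G]\cap M=V(D)$. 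Your method, pushed as far as it goes, does prove that every \emph{real} of $M$ lies in $V(D)$ (Corollary~\ref{cor: reals in M} together with the parameter $x\restriction f\in V(D)$), but it does not climb past that level; some detour through a model possessing a full generic, such as the paper's $V(D)[G]$, appears to be essential.
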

Note that $V(D)=V(\bigcup D)$ where $\bigcup D$ is a set of reals.
\begin{proof}
As before, since the map $x\mapsto [x\restriction f]$ is $E_1$-invariant for each $f$, we see that $D\in M$, and therefore $V(D)\subset M$.
It remains to show that $M\subset V(D)$.
Note that each $f\in\mathcal{F}$ is encoded by $[x\restriction f]$, as the domain of all members in $[x\restriction f]$, and so $\mathcal{F}\in V(D)$ as well.
\begin{claim}\label{claim;forcing-over-V(D)}
There is a poset $\mathbb{Q}\in V(D)$ and a filter $G\subset \mathbb{Q}$ which is $\Q$-generic over $V[x]$, such that in $V(D)[G]$ there a $y\in (2^\omega)^\omega$, Cohen generic over $V$, and there is a function $g\in V[x][G]$ dominating all functions in $\mathcal{F}$ so that $y\restriction g = x\restriction g$.
\end{claim}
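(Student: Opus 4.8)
The plan is to realize the desired $y$ as the generic object of a poset $\mathbb{Q}$ living in $V(D)$, so that $y\in V(D)[G]$ holds tautologically, and then to secure the two substantive properties---that $y\restriction g=x\restriction g$ and that $y$ is Cohen over $V$---by a careful choice of $G$ together with a projection argument. First I would define $\mathbb{Q}\in V(D)$ to build the pair $(y,g)$ simultaneously: a condition carries a finite stem determining $y$ on an initial block of cells, a finite approximation to $g$, and a finite promise set $F\in[\mathcal{F}]^{<\omega}$ committing the eventual values of $g$ to dominate every $f\in F$ (a Hechler-style side condition, legitimate in $V(D)$ because $\mathcal{F}\in V(D)$). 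The only link to $x$ is a tail constraint requiring $y\restriction f\in[x\restriction f]$ for each $f\in\mathcal{F}$; this is expressible in $V(D)$ since the relevant class is recoverable from $f$ as the unique member of $D$ whose elements have domain $\set{(n,m)}{m<f(n)}$. As all of this data---finite stems, finite subsets of $\mathcal{F}$, and members of classes in $D$---lies in $V(D)$, the poset $\mathbb{Q}$ is in $V(D)$, and its generic $g$ dominates $\mathcal{F}$.

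Next, forcing with $\mathbb{Q}$ over $V[x]$, I would produce one specific $G$ whose read-off real $y$ agrees with $x$ below $g$. The crucial point is that agreement with $x$ is demanded only on the region below the \emph{generic} staircase $g$, so the agreement region is not fixed in advance; moreover $x\restriction f$ genuinely belongs to its own class $[x\restriction f]$, so setting below-$g$ cells equal to $x$ is always an available extension, and once a below-$g$ cell is committed it is frozen. Consequently no genuine dense set of $V[x]$ can force a committed below-$g$ value away from $x$ (such a requirement is not dense below a condition that has already frozen that cell to the value of $x$). A recursive construction that meets the dense sets of $V[x]$ while always extending so as to freeze newly decided below-$g$ cells to the corresponding values of $x$ then yields a $G$ that is $\mathbb{Q}$-generic over $V[x]$ with $y\restriction g=x\restriction g$, and the tail constraint is automatically respected since $x$ below $g$ lies in each class. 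Finally, that $y$ is Cohen over $V$ should follow by exhibiting a forcing projection $\pi\colon\mathbb{Q}\to\mathbb{P}$ reading off $y$, exactly as in the proof of Lemma~\ref{lem;alteration}: below $g$ the values of $y$ coincide with those of the $V$-generic $x$ while above $g$ they are fresh, and the projection/mutual-genericity computation shows the amalgam is $\mathbb{P}$-generic over $V$.

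The main obstacle is precisely this second step: reconciling the genericity of $G$ over $V[x]$ with the demand that the generic real agree with the ground-model-visible object $x$ on the unbounded region below $g$. This is exactly the place where the information ``$x$ below $g$''---which is \emph{not} available in $V(D)$, reflecting the failure of choice for $\seqq{[x\restriction f_\alpha]}{\alpha}$ in $M$ established in Lemma~\ref{lem;no-choice-b-E0-classes}---must be injected through the generic rather than read off from $D$. I expect the delicate points to be verifying that freezing below-$g$ cells to $x$ is compatible with meeting every dense set of $V[x]$, so that the constructed $G$ is genuinely generic, and checking that the projection $\pi$ witnesses $\mathbb{P}$-genericity of $y$ over $V$ despite $\mathbb{Q}$ not lying in $V$; both are modeled on, and should reduce to, the alteration argument of Lemma~\ref{lem;alteration}.
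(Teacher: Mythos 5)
Your plan founders at exactly the point you flag as delicate, and the failure is not a technicality: in your poset, genericity of $G$ over $V[x]$ is flatly incompatible with $y\restriction g=x\restriction g$ for the generic $g$. Consider the set $D^*$ of conditions whose stem disagrees with $x$ at some cell lying below the condition's $g$-approximation. This set is dense: given a condition with promise set $F$, extend the $g$-part at a fresh column $n$ to a value strictly above $\max\set{f(n)}{f\in F}$, and then set one new cell $(n,m)$ in the gap --- below $g$ but above every promised $f$, hence untouched by any commitment --- to the value $1-x(n,m)$. Since $\mathbb{Q}\in V(D)\subset V[x]$ and $x\in V[x]$, we have $D^*\in V[x]$. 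So every filter that is genuinely $\mathbb{Q}$-generic over $V[x]$ meets $D^*$ (and a routine variant shows it produces infinitely many disagreements below $g$), while your freezing construction, which by design never creates a below-$g$ disagreement, avoids $D^*$ and therefore is not generic over $V[x]$. Your parenthetical defense only excludes dense sets that try to flip cells already frozen; $D^*$ instead demands a disagreement at a fresh, uncommitted cell, and that demand is dense. Nor can you weaken the genericity requirement: genericity over $V[x]$ is what the theorem's final step feeds into Lemma~\ref{lem;mutgen} to conclude $V(D)[G]\cap M\subset V(D)$. The conceptual reason no cleverer bookkeeping can patch this is that ``exact agreement with $x$ below some function dominating $\mathcal{F}$'' is precisely the information that $V(D)$ lacks (cf.\ Lemma~\ref{lem;no-choice-b-E0-classes}); a poset lying in $V(D)$ cannot force its own generic object to carry this information, so over $V[x]$ it generically fails to.

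What your write-up is missing is the decoupling that drives the paper's proof: do not ask the forcing to produce the exact-agreement function at all. The paper takes $\mathbb{Q}$ to be a plain collapse making $\mathcal{F}$, $D$ and $(\mathcal{P}(\P))^V$ countable, lets $G$ be an \emph{arbitrary} $\mathbb{Q}$-generic over $V[x]$, and then, inside $V(D)[G]$, runs a recursion producing $h_n\in\mathcal{F}$ (increasing and dominating $\mathcal{F}$) and $y_n\in[x\restriction h_n]$ with $y_{n+1}$ extending $y_n$ and $y_n$ extending a condition in the $n$-th dense open subset of $\P$ from $V$; the point is that the recursion only ever selects members of classes in $D$, so agreement with $x$ below each $h_n$ holds modulo finitely many cells automatically and structurally --- the construction never sees $x$, so no dense set can make it disagree more than finitely. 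Only afterwards, in $V[x][G]$ where $x$ is available, does one define $g$ as the staircase $g(k)=h_n(k)$ for $m_n<k\leq m_{n+1}$, where $m_n$ bounds the finite disagreement between $y_n$ and $x\restriction h_n$; this $g$ dominates $\mathcal{F}$ and satisfies $y\restriction g=x\restriction g$ exactly. (Your poset could be repaired in the same spirit: drop the generic $g$, keep only the class-membership commitments, and extract $g$ post hoc in $V[x][G]$ from the finitely many disagreements below each $f$ --- but that post hoc extraction is the missing idea.) Separately, your appeal to a projection $\pi\colon\mathbb{Q}\to\P$ as in Lemma~\ref{lem;alteration} does not work here: once conditions carry commitments, the stem map is not a projection, since a $\P$-extension of a stem may violate the commitments; Cohen genericity of $y$ over $V$ must instead come from a density argument using that finite modifications of $x$ are Cohen over $V$, which is exactly how the paper arranges the choice of $h_{n+1}$ and $y_{n+1}$.
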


\begin{proof}
Consider the poset $\mathbb{Q}\in V(D)$ to enumerate $\mathcal{F}$, $D$, and $(\mathcal{P}(\mathbb{P}))^V$. Fix $G\subset\mathbb{Q}$ generic over $V[x]$. In the extension $V(D)[G]$
let $\seqq{E_n}{n<\omega}$ be an enumeration of the dense open subsets of $\mathbb{P}$ in $V$.
Next, find in $V(D)[G]$ sequences $\seqq{y_n}{n<\omega}$ and $\seqq{h_n}{n<\omega}$ such that 
\begin{itemize}
    \item $h_n\in \mathcal{F}$, $y_n\in [x\restriction h_n]$;
    \item $\seqq{h_n}{n<\omega}$  is $<$-increasing and $<^\star$-dominating $\mathcal{F}$;
    \item $y_{n+1}$ extends $y_n$;
    \item $y_n$ extends a condition in $E_n$.
\end{itemize}
This can be done as follows. Take $h_{n+1}\in\mathcal{F}$ which is strictly above $h_n$, such that $h_{n+1}$ dominates the $n+1$'th member of $\mathcal{F}$ (under some fixed enumeration in $V(D)[G]$), and such that there exists some member of $[x\restriction h_{n+1}]$ extending a condition in $E_{n+1}$.
To find $y_{n+1}$, take any member of $[x\restriction h_{n+1}]$ (the minimal according to the enumeration given by the collapse), and change it in finitely many positions so that it extends $y_n$.

Define $y=\bigcup_n y_n$, so $y\in 2^{\omega\times\omega}$ is $\P$-generic over $V$.
Work now in $V[x][G]$. For each $n$, there is some $m_n$ such that $y_n$ and $x\restriction h_n$ agree past $m_n$. We may assume that $m_{n}<m_{n+1}$. Define $g\in\omega^\omega$ by $g(k)=h_n(k)$ if $n$ such that $m_n<k\leq m_{n+1}$, and $g(k)=0$ if $k\leq m_0$. 
Then $x\restriction g=y\restriction g$. 
Given $f\in\mathcal{F}$, there are $n$ and $m$ such that for $k>m$, $f(k)<h_n(k)<g(k)$.
We therefore established that $g$ dominates each $f\in\mathcal{F}$, which concludes the proof of the claim.
\end{proof}

\begin{lemma}\label{lem;dominating-defines-M}
Suppose $y\in 2^{\omega\times\omega}$ is $\P$-generic over $V$, in some further generic extension, and $y$ extends $x\restriction g$ for some $g$ which dominates $\omega^\omega\cap M$.
Them $M\subseteq V[y]$.
\end{lemma}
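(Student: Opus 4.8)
The plan is to show that each $Z\in M$ lies in $V[y]$, with the whole difficulty concentrated in reconstructing $Z$ inside $V[y]$ from a name $\dot Z\in V$ together with $y$. The guiding observation is that, since $y$ extends $x\restriction g$, the partial function $w:=x\restriction g$ satisfies $w\sqsubseteq x$ and $w\sqsubseteq y$; so if I can produce a $\P$-name $\dot Z'\in V$ with $\dot Z'[x]=Z$ for which $w$ \emph{defines} $Z$ via $\dot Z'$ (in the sense of Definition~\ref{defn;defining-subsets-of-V}), then the remark following that definition, applied to the generic $y\sqsupseteq w$, gives $\dot Z'[y]=Z$, and hence $Z\in V[y]$. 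Thus everything reduces to manufacturing, for a given $Z$, a name whose value is already decided by the shared part $x\restriction g$ of $x$ and $y$.

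First I would handle $Z\subseteq\kappa$ a set of ordinals (in particular every real of $M$). Fix a name $\dot Z$ and take the tail names $\seqq{\tau_n^{\dot Z}}{n<\omega}$ and the diagonal $d^{\dot Z}$ of Lemma~\ref{lem;d^Z}, all lying in $M$. Choose $N$ so large that $g(k)\geq d^{\dot Z}(k)$ for every $k\geq N$, and set $\dot Z':=\tau_N^{\dot Z}$, a $\P^N$-name with $\dot Z'[x]=Z$. For each $\alpha<\kappa$ the function $f^{\check\alpha,\dot Z}$ of Lemma~\ref{lemma: restriction to dominating function} lies in $\mathcal F=\omega^\omega\cap M$, so it is eventually dominated by $g$; picking $n\geq N$ past this point, the minimal condition $p_n\in\P^n$ deciding $\tau_n^{\check\alpha}\in\tau_n^{\dot Z}$ sits below $g$ on columns $\geq n$, whence $p_n\subseteq x\restriction g=w$. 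The coherence conditions witnessing $\tau_N^{\dot Z}=\tau_n^{\dot Z}$ likewise live on columns $\geq N$ below $d^{\dot Z}\leq g$, so also inside $w$. Consequently $w$ decides $\check\alpha\in\dot Z'$ correctly for every $\alpha$, i.e.\ $w$ defines $Z$ via $\dot Z'$. The point is precisely that using a \emph{tail} name pushes all relevant conditions past the finitely many ``bad'' columns $k<N$ where possibly $g(k)<d^{\dot Z}(k)$, which are the only places where $x$ and $y$ may disagree. By the remark after Definition~\ref{defn;defining-subsets-of-V}, $\dot Z'[y]=Z$, so $Z\in V[y]$. In particular $\omega^\omega\cap M\subseteq V[y]$, and since $f\leq^\ast g$ forces $[x\restriction f]=[y\restriction f]$ for $f\in\mathcal F$, every such $E_0$-class is in $V[y]$ as well.

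The main obstacle is upgrading this from sets of ordinals to arbitrary $Z\in M$. This step cannot be bypassed by coding: by Lemma~\ref{lem;no-choice-b-E0-classes} the model $M$ fails AC, so a set such as $\bigcup D$ need not admit a set-of-ordinals code \emph{inside} $M$, and the sets of ordinals of $M$ do not by themselves determine $M$. I therefore plan an $\in$-induction on the rank of $Z$ in $M$: assuming every element of $Z$ has already been realized as $\rho[y]$ for a suitable $V$-name, I reconstruct $Z$ itself from $\dot Z':=\tau_N^{\dot Z}$, using Lemma~\ref{lemma: restriction to dominating function} (now with arbitrary $\dot z\in M$) to see that each membership relation $\dot z\in\dot Z$ is decided by a tail condition below $g$, hence by $w\sqsubseteq y$. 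The delicate point, and the crux of the argument, is that the naive evaluation $\tau_N^{\dot Z}[y_N]$ can \emph{overshoot} $Z$, picking up elements arising from the behaviour of the name above the graph of $g$, where $x$ and $y$ differ; so I must extend the ``defines/remark'' mechanism of Definition~\ref{defn;defining-subsets-of-V} to arbitrary names, cutting the reconstruction down to witnesses that are conditions common to $x$ and $y$. Making this generalized notion of ``$w$ defines $Z$'' precise, and verifying that the reconstructed set is computed internally to $V[y]$ and equals $Z$ exactly, is where the real work lies; once it is in place, $M\subseteq V[y]$ follows.
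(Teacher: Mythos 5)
Your second paragraph (sets of ordinals, hence all reals and functions of $M$) is sound, and it is essentially the paper's route to Corollary~\ref{cor;subsets-of-ordinals-in-M} and Corollary~\ref{cor: reals in M}: you absorb the finitely many columns where $g$ may lie below $d^{\dot Z}$ by passing to a tail name $\tau_N^{\dot Z}$, where the paper instead passes to a finite alteration of $y$; the two devices are interchangeable. The genuine gap is in your third paragraph: you set up the $\in$-induction, correctly identify that exact equality of the reconstructed set with $Z$ is the crux, and then stop, explicitly leaving that verification (``where the real work lies'') undone. Moreover, the repair you propose --- a generalized ``defines'' mechanism, computed internally to $V[y]$, that cuts witnesses down to conditions common to $x$ and $y$ --- faces a circularity you do not address: recognizing inside $V[y]$ which conditions lie in the agreement region (equivalently, knowing $g$, $x\restriction g$, or the diagonals $d^{\dot z}$ of $M$-names) is already tantamount to knowing part of $M$ inside $V[y]$, which is the statement being proved.

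The paper's inductive step avoids any internal reconstruction. Given $Z\in M$ with name $\dot Z$, it fixes a \emph{finite alteration} $y'$ of $y$ agreeing with $x$ below both $d^{\dot Z}$ and $g$ (possible since $g$ eventually dominates $d^{\dot Z}\in M$), and proves by induction on rank that the plain evaluation $\dot Z[y']$ equals $Z$; since $V[y']=V[y]$, this yields $Z\in V[y]$ with nothing defined ``internally.'' In the inductive step, for $z\in M$ of lower rank one passes to a further finite alteration $y''$ agreeing with $x$ also below $d^{\dot z}$. Because $y'$ and $y''$ both extend the coherence conditions forcing $\dot Z[\dot x]=\tau^{\dot Z}_n[\dot x_n]$ for every $n$, they evaluate $\dot Z$ identically; the inductive hypothesis gives $\dot z[y'']=z$; and Lemma~\ref{lemma: restriction to dominating function}, applied with the pointwise maximum of $g$, $d^{\dot Z}$, $d^{\dot z}$ (which dominates $f^{\dot z,\dot Z}$), shows that $y''$ decides $\dot z\in\dot Z$ \emph{correctly in both directions}. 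Thus membership in $\dot Z[y']$ is correct for every lower-rank element of $M$, and since all elements of $Z$ are such, the paper concludes $\dot Z[y']=Z$. Your instinct that overshoot is the sensitive point is right --- the paper dispatches it tersely, via this two-sided correctness for evaluations at $y''$ rather than via any new definability notion --- but the idea you are missing is precisely this one: evaluate the given name at a finite alteration of $y$ and prove that evaluation correct by induction, instead of trying to reconstruct $Z$ from witnesses inside $V[y]$.
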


\begin{proof}
Note that for any $Z\in M$ we can find some $y'$, a finite alteration of $y$, so that $y'$ and $x$ agree above $d^{\dot{Z}}$ and above $g$.
This is because $g$ dominates $d^{\dot{Z}}$, so $y$ and $x$ agree eventually above $d^{\dot{Z}}$.
\begin{claim}
For any $Z$ in $M$, if $y'$ is a finite alteration of $y$ such that $y'$ agrees with $x$ above $d^{\dot{Z}}$ and $g$, then $\dot{Z}[y']=Z$.
\end{claim}
In this case, $V[y']=V[y]$, so we conclude that $Z\in V[y]$ for any $Z\in M$, as the lemma requires.
We now prove the claim, by induction on the rank of the set $Z$.

Fix $Z$ in $M$, and let $y'$ be as above.
Fix some $z\in M$ of lower rank, and assume that $z\in Z$. We need to show that $z\in \dot{Z}[y']$. (The argument for when $z\notin Z$ is similar.)
Let $\dot{z}$ be a name such that $\dot{z}[x]=z$, and $y''$ a finite alteration of $y'$ which agrees with $x$ also above $d^{\dot{z}}$ as well as $d^{\dot{Z}}$.

Since $y'$ and $y''$ differ by only finitely many coordinates, and both force that $\dot{Z}=\tau^{\dot{Z}}_n$ for all $n$, then $\dot{Z}[y']=\dot{Z}[y'']$.
$y''$ agrees with $x$ above $g$ and $d^{\dot{z}}$, so $\dot{z}[y'']=z$ by the inductive hypothesis. 
Furthermore, since $g$ dominates
$d^{\dot{z},\dot{Z}}$, then $\dot{z}[y'']\in \dot{Z}[y'']$ by Lemma~\ref{lemma: restriction to dominating function}. It follows that $z\in\dot{Z}[y']$, as desired.
\end{proof}

\begin{lem}[Folklore]\label{lem;mutgen}
Suppose $N\subset M$ are models of ZF, $\mathbb{P}\in N$ is a poset. If $G$ is $\mathbb{P}$-generic over $M$, then $N[G]\cap M=N$.
\end{lem}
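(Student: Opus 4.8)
The plan is to prove the nontrivial inclusion $N[G]\cap M\subseteq N$; the reverse inclusion is immediate since $N\subseteq N[G]$ and $N\subseteq M$. First I would note that because $N\subseteq M$ and $G$ is $\mathbb{P}$-generic over $M$, it is a fortiori $\mathbb{P}$-generic over $N$, so that $N[G]$ and the forcing relation over $N$ make sense. I would then argue by $\in$-induction on rank: fix $a\in N[G]\cap M$ and assume every member of $N[G]\cap M$ of smaller rank already lies in $N$. Since both $M$ and $N[G]$ are transitive, every element of $a$ belongs to $N[G]\cap M$ and has smaller rank, so the induction hypothesis yields $a\subseteq N$. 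It remains only to show that $a$ itself is a member of $N$.

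The key move exploits the hypothesis $a\in M$. Fix a name $\tau\in N$ with $\tau_G=a$. Working in $M$, form the check-name $\check a$ (legitimate precisely because $a\in M$) and apply the truth lemma in $M$ to the statement $\tau=\check a$, which holds in $M[G]$ since both sides realize to $a$. This produces a single condition $p\in G$ with $p\Vdash^M\tau=\check a$. Because $\mathbb{P}\in N$ and $N$ is transitive, $p\in N$: this is the crux, as it converts information about the external generic $G$ into a fact about one ground-model condition available in $N$.

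Next I would recover $a$ inside $N$. From $p\Vdash^M\tau=\check a$ one reads off, for every $y$, that $y\in a$ if and only if $p\Vdash^M\check y\in\tau$. Invoking the absoluteness of the forcing relation between the transitive models $N\subseteq M$ for the fixed parameters $\mathbb{P},\tau,p\in N$, this is equivalent to $p\Vdash^N\check y\in\tau$; conversely, if $y\in N$ and $p\Vdash^N\check y\in\tau$, then by absoluteness $p\Vdash^M\check y\in\tau$, and since $p\in G$ the truth lemma over $M$ gives $y\in\tau_G=a$. Hence $a=\{\,y:\ p\Vdash^N\check y\in\tau\,\}$. Finally, choosing in $N$ (by Replacement) an ordinal $\gamma$ large enough that every realization of a name in $\dom\tau$ has rank below $\gamma$, we may rewrite this as $a=\{\,y\in V_\gamma^N:\ p\Vdash^N\check y\in\tau\,\}$, which is a set of $N$ by Separation. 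Thus $a\in N$, closing the induction.

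The step demanding the most care is the appeal to absoluteness of the forcing relation for transitive models of $\mathrm{ZF}$ containing $\mathbb{P}$ and the relevant names, that is, the standard fact that the defined relation $\Vdash$ is computed identically in $N$ and in $M$ (see Kunen). Everything else is a routine application of the truth lemma, the only genuinely new observation being that the deciding condition $p$ witnessing $\tau=\check a$ necessarily lies in $N$.
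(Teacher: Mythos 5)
The paper offers no proof of this lemma at all: it is stated as ``Folklore'' and used as a black box, so there is nothing internal to compare your argument against. Your proof is correct, and it is the standard argument one would give: genericity over $M$ passes to $N$; the deciding condition $p\in G$ for the atomic sentence $\tau=\check a$ (obtained from the truth lemma in $M$, using $a\in M$ to form $\check a$) automatically lies in $N$ because $\mathbb{P}\subseteq N$ by transitivity; and then $a$ is recovered inside $N$ as $\set{y\in V_\gamma^N}{p\Vdash^N \check y\in\tau}$ via absoluteness of the forcing relation, with the induction on rank supplying $a\subseteq N$.

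Two small points of care, neither a gap. First, the absoluteness you invoke is only valid for the \emph{atomic} forcing relation (and Boolean combinations with quantifiers bounded by $\mathbb{P}$): the relation $p\Vdash\exists x\,\phi$ quantifies over all names and is genuinely not absolute between $N$ and $M$. Your proof only ever uses the instances $p\Vdash \check y\in\tau$ and $p\Vdash \tau=\check a$, so you are safe, but it is worth saying ``atomic'' explicitly when citing Kunen. Second, the choice of $\gamma$ should be phrased in terms of objects of $N$: the ``realizations of names in $\dom\tau$'' live in outer models, so Replacement in $N$ cannot be applied to them directly; instead take $\gamma=\mathrm{rank}(\tau)$ and use the standard fact that $\mathrm{rank}(\sigma_G)\leq\mathrm{rank}(\sigma)$ for every name $\sigma$, which gives the same bound and is computable in $N$.
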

Finally, take $G$ and $y\in V(D)[G]$ as above.
Since $G$ is generic over $V[x]$, it is generic over $M$, and therefore $V(D)[G]\cap M\subset V(D)$ by Lemma~\ref{lem;mutgen}.
Since $M\subset V[y]\subset V(D)[G]$, it follows that $M\subset V(D)$, concluding the proof of the theorem.
\end{proof}

We now prove part (3) of our main Theorem~\ref{thm;E_1-classification}, that there is no complete classification for $E_1$ which is absolute for all forcing.
\begin{proof}[Proof of part Theorem~\ref{thm;E_1-classification} part (3)]
Assume towards a contradiction that $x\mapsto A_x$ is an absolute classification of $E_1$. Let $A=A_x$, the classifying invariant of the Cohen generic $x\in (2^\omega)^\omega$. Then $A\in M=V(D)$.
By \cite[Theorem B]{Grigorieff-1975}
there is some poset $\mathbb{Q}$ in $V(D)$ and a $\mathbb{Q}$-generic filter over $V(D)$, $H$, such that $V(D)[H]=V[x]$.
Fix a $\mathbb{Q}$-name $\sigma$ such that $\sigma[H]=x$ and a condition $q\in\mathbb{Q}$ forcing, in $V(D)$, that $A_{\sigma}=\check{A}$.

Let $q\in H'$ be $\mathbb{Q}$-generic over $V(D)[H]$, and let $x'=\sigma[H']$, so $A_x=A=A_{x'}$. We claim that $x$ and $x'$ are not $E_1$-related, showing that $x\mapsto A_x$ is not a complete classification in this extension.
Indeed, by Lemma~\ref{lem;mutgen}, $V(D)[H]\cap V(D)[H']=V(D)=M$. If $x$ were $E_1$-related to $x'$, then for large enough $n$, they would agree on their n'th columns. This would imply that for large $n$, the n'th column $x(n)$ is in $V[x]$ and $V[x']$, and therefore is in $M$. This is a contradiction, since $x(n)\not\in V[x_{n+1}]$ and $M\subset V[x_{n+1}]$.

\end{proof}

\section{Turbulence}\label{section: turbulence}
We now show that turbulent equivalence relations remain completely unclassifiable in the context of generically absolute classifications.
When $E=E_a$ is the orbit equivalence relation of a turbulent action $a$, this follows directly from the characterization of turbulence in \cite{Larson_Zapletal_2020} (Theorem~\ref{thm: turbulence characterization} below).
In this section we extend the characterization of Larson and Zapletal (Theorem~\ref{thm: turbulence characterization intersection model}), and use this to extend the definition of turbulence to arbitrary analytic equivalence relations (Definition~\ref{defn: generically turbulent double brackets}).
Finally, we prove in this generality that turbulent equivalence relations are not generically classifiable (Proposition~\ref{thm: generic turb not generic classifiable}).

\begin{thm}[Larson-Zapletal {\cite[Theorem 3.2.2]{Larson_Zapletal_2020}}]\label{thm: turbulence characterization}
Suppose $a\colon G\curvearrowright X$ is a continuous action of a Polish group $G$ on $X$ with dense and meager orbits.
The following are equivalent.
\begin{itemize}
    \item $a\colon G\curvearrowright X$ is generically turbulent;
    \item If $x\in X$ is Cohen-generic over $V$ and $g\in G$ is Cohen generic over $V[x]$ then $V[x]\cap V[g\cdot x]=V$. 
\end{itemize}
\end{thm}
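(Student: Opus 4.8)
The plan is to prove the two implications separately, translating Hjorth's dynamical definition of (generic) turbulence into the forcing language of Cohen-generic points and generic group perturbations. Recall that for an open $U\subseteq X$ and a symmetric open neighborhood $W\subseteq G$ of the identity, the local orbit $\mathcal{O}(x,U,W)$ consists of the points reachable from $x$ by finitely many translations by elements of $W$ that stay inside $U$; under the standing hypothesis of dense and meager orbits, generic turbulence asserts that a comeager set of points $x$ have every local orbit $\mathcal{O}(x,U,W)$ somewhere dense. The dictionary I would set up between the dynamical and the forcing sides is this: if $h\in W$ is Cohen-generic over $V[x]$, then the perturbation $h\cdot x$ is again Cohen-generic over $V$ \emph{precisely} when the relevant local orbits are somewhere dense — for a dense open $D\subseteq X$ coded in $V$, the set $\{h : h\cdot x\in D\}$ is open by continuity, and it is dense in $W$ exactly when no neighborhood maps $x$ into the nowhere dense set $X\setminus D$, i.e. when the local orbit is somewhere dense. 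Thus somewhere-density of local orbits is the bridge, and its failure is what produces a shared invariant.

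For the forward direction, assume generic turbulence and suppose $z\in V[x]\cap V[g\cdot x]$; the goal is $z\in V$. The strategy is to connect $x$ to $g\cdot x$ by a \emph{turbulent path}: working in a further ccc extension, build a finite sequence $x=y_0,y_1,\dots,y_n=g\cdot x$ with $y_{i+1}=h_i\cdot y_i$ and $h_i$ in a suitable neighborhood (with $\bigcup W$ generating $G$ so that $g$ is reachable), where each $h_i$ is chosen Cohen-generic over the model generated by all previously chosen points. The somewhere-density supplied by turbulence is exactly what guarantees enough ``room'' to keep picking generic intermediate points while still steering toward the prescribed target $g\cdot x$. Once such a path exists, I would peel off common information by iterating the folklore mutual-genericity Lemma~\ref{lem;mutgen} along consecutive links: the path is arranged so that each $V[y_{i+1},\dots,y_n]$ is generic, via a poset in $V$, over the model generated by the earlier points, which forces any set common to $V[y_0]=V[x]$ and $V[y_n]=V[g\cdot x]$ down to $V$, whence $z\in V$.

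For the reverse direction I would prove the contrapositive. If the action is not generically turbulent then, under the standing hypothesis, the (invariant, Baire-measurable) set of turbulent points is meager, so comeagerly many $x$ fail turbulence; since there are only countably many basic pairs $(U,W)$, there is a fixed basic pair such that, for Cohen-generic $x\in U$, the local orbit $\mathcal{O}(x,U,W)$ is nowhere dense. Its closure $\overline{\mathcal{O}(x,U,W)}$, coded as a real, is a genuine invariant: it is unchanged when $x$ is moved by a single element of $W$, so for $g$ Cohen-generic over $V[x]$ chosen small enough that $g\cdot x$ lands in the same local orbit, this code lies in $V[x]\cap V[g\cdot x]$. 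Finally I would argue the code is not in $V$: because $x$ is Cohen-generic with dense orbit, the nowhere-dense local-orbit closure genuinely depends on the generic position of $x$ and so escapes the ground model, witnessing $V[x]\cap V[g\cdot x]\neq V$.

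The main obstacle is the connecting-path construction in the forward direction. One must simultaneously (a) reach the prescribed endpoint $g\cdot x$ using only small $W$-steps, which is where somewhere-density of local orbits is indispensable, and (b) keep the intermediate points mutually generic enough that the iterated application of Lemma~\ref{lem;mutgen} actually peels the common information down to $V$. Balancing the dynamical reachability requirement against the genericity bookkeeping — and checking that the whole path can be realized inside a tame (ccc) extension so that the mutual-genericity lemma applies — is the technical heart of the argument. By contrast, the reverse direction is a comparatively routine extraction of an invariant from a nowhere-dense local orbit, the only delicate point being the verification that the extracted code is not already in $V$.
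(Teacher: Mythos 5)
First, a framing point: the paper does not prove this theorem at all --- it is imported as a black box from Larson--Zapletal \cite{Larson_Zapletal_2020}, and what the paper proves is the extension, Theorem~\ref{thm: turbulence characterization intersection model}, via Lemma~\ref{lem;actionable-intersection-of-two}. So your attempt has to be measured against the original argument, not anything in this paper.

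Your reverse direction is essentially right in outline: if generic turbulence fails, the $0$--$1$ law and countability of basic pairs let you fix $(U,W)$ so that the Cohen-generic $x\in U$ has nowhere dense local orbit $\mathcal{O}(x,U,W)$; the closure of the local orbit is computable from any of its points, hence lies in $V[x]\cap V[g\cdot x]$ for a suitable generic $g\in W$. But your final step (``the code genuinely depends on the generic position of $x$'') should be replaced by the precise argument: the closure is a closed nowhere dense set containing $x$, and a Cohen generic avoids every nowhere dense closed set coded in $V$, so this set cannot lie in $V$.

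The forward direction contains a genuine gap, and the ``dictionary'' motivating it is false. (i) For \emph{any} continuous action, if $(x,g)$ is product-Cohen-generic then $g\cdot x$ is Cohen generic over $V$: the action map $X\times G\to X$ is open, so preimages of dense open sets coded in $V$ are dense open, and the generic pair meets them. Thus ``the perturbed point is generic over $V$'' is automatic and is not equivalent to somewhere-density of local orbits; moreover your criterion concerns one-step translates $W'\cdot x$, not local orbits (iterated $W$-walks staying inside $U$), which is a different notion. (ii) The path construction cannot work as stated: with the endpoint $y_n=g\cdot x$ prescribed, the last step $h_{n-1}$ is determined by $y_{n-1}$ and the target up to the stabilizer (a meager set), so it lies in a meager set coded in $V[y_{n-1},g\cdot x]$ and cannot be Cohen generic over the earlier models; conversely, steps that really are generic over everything previous will comeagerly miss any prescribed target. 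Note also that your stronger bookkeeping requirement is unavoidable, because link-by-link mutual genericity does not chain: $V[y_0]\cap V[y_1]=V$ and $V[y_1]\cap V[y_2]=V$ do not yield $V[y_0]\cap V[y_2]=V$; what you need is that the whole tail model containing $g\cdot x$ is a generic extension of $V$ that is generic over $V[x]$ (then Lemma~\ref{lem;mutgen} finishes), and that statement is essentially as strong as the theorem itself and is exactly what remains unproved. The Larson--Zapletal proof avoids prescribing endpoints altogether: it argues with names and conditions. Fixing names $\sigma,\tau$ and a product condition $(p,q)$ forcing $\sigma[\dot x]=\tau[\dot g\cdot\dot x]$, one shows $p$ decides $\check\alpha\in\sigma$ for every $\alpha$: were $p_0,p_1\leq p$ to decide differently, turbulence (somewhere-density of local orbits) supplies two generic configurations $(x_0,g_0)$, $(x_1,g_1)$ extending $(p_0,q)$, $(p_1,q)$ with the \emph{same} image point $g_0\cdot x_0=g_1\cdot x_1$, giving $\alpha\in\sigma[x_0]=\tau[g_0 x_0]=\tau[g_1 x_1]=\sigma[x_1]\not\ni\alpha$, a contradiction; hence the decision is ground-model and $z\in V$. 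There, genericity is arranged by meeting dense sets (which turbulence makes dense), never verified for objects fixed in advance --- precisely the obstruction your construction runs into.
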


\begin{defn}[{\cite[Definition 3.10]{Kanovei-Sabok-Zapletal-2013}}]\label{def;double-brackets}
Let $E$ be an analytic equivalence relation on a Polish space $X$, and let $x\in X$ be in some generic extension of $V$.
Define
\begin{equation*}
    V[[x]]_E=\bigcap\set{V[y]}{y\textrm{ is in some further generic extension, }y\in X\textrm{ and }x\mathrel{E}y}.
\end{equation*}
That is, a set $b$ is in $V[[x]]_E$ if in any generic extension of $V[x]$ and any $y$ in that extension which is $E$-equivalent to $x$, $b$ is in $V[y]$.
\end{defn}
We will call this model $V[[x]]_E$ the \textbf{intersection model} of $x$, with respect to $E$.
For example, our tail intersection model $M$ is precisely $V[[x]]_{E_1}$.
Indeed, it follows from the definition that $V[[x]]_{E_1}\subset V[x_n]$ for each $n$, and therefore $V[[x]]_{E_1} \subset M$. On the other hand, for any $y\mathrel{E_1}x$, in some generic extension, there is some $n$ such that $V[x_n]\subset V[y]$. Thus $M\subset V[y]$ for any such $y$, and so $M\subset V[[x]]_{E_1}$.


In the setting of Theorem~\ref{thm: turbulence characterization}, note that $V[[x]]_{E_a}\subset V[x]\cap V[g\cdot x]$, for any $g$, where $E_a$ is the orbit equivalence relation induced by the action $a$.
It follows from Theorem~\ref{thm: turbulence characterization} that if $a\colon G\curvearrowright X$ is a generically turbulent action, then for a Cohen generic $x\in X$, $V[[x]]_{E_a}=V$ is as small as can be.
We show that the latter already characterizes turbulence.
\begin{thm}\label{thm: turbulence characterization intersection model}
Suppose $a\colon G\curvearrowright X$ is a continuous action of a Polish group $G$ on $X$ with dense and meager orbits.
The following are equivalent.
\begin{itemize}
    \item $a\colon G\curvearrowright X$ is generically turbulent.
    \item If $x\in X$ is Cohen-generic over $V$ then $V[[x]]_{E_a}=V$.
\end{itemize}
\end{thm}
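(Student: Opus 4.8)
**

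The plan is to prove the equivalence in Theorem~\ref{thm: turbulence characterization intersection model} by leveraging the already-established Theorem~\ref{thm: turbulence characterization}. The forward direction is essentially free: if $a$ is generically turbulent, then by Theorem~\ref{thm: turbulence characterization} we have $V[x]\cap V[g\cdot x]=V$ for a Cohen-generic $x$ and $g$ Cohen-generic over $V[x]$. Since $V[[x]]_{E_a}\subseteq V[x]\cap V[g\cdot x]$ (as noted in the excerpt, $g\cdot x$ is one particular $E_a$-equivalent element in a further generic extension), we get $V[[x]]_{E_a}\subseteq V$, and of course $V\subseteq V[[x]]_{E_a}$ always holds, giving equality.

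The substantive direction is the contrapositive: assuming $a$ is \emph{not} generically turbulent, I would produce some $y\mathrel{E_a}x$ in a generic extension with a set $b\in V[y]\setminus V$, witnessing $V[[x]]_{E_a}\neq V$. By Theorem~\ref{thm: turbulence characterization}, failure of generic turbulence means that for $x$ Cohen-generic and $g$ Cohen-generic over $V[x]$, we have $V[x]\cap V[g\cdot x]\supsetneq V$, so there is a set $b\in (V[x]\cap V[g\cdot x])\setminus V$. The natural candidate for the $E_a$-related witness is $y=g\cdot x$ itself: then $b\in V[y]$. The point to establish is that this single instance $b\in V[y]$ for one choice of $g$ actually forces $b\in V[[x]]_{E_a}$, i.e.\ that $b$ lies in $V[y']$ for \emph{every} $E_a$-equivalent $y'$ in every further generic extension. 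This is where the homogeneity of Cohen forcing and a genericity/mutual-genericity argument must be deployed.

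The main obstacle, and the heart of the proof, is bridging the gap between the two notions of ``$E_a$-related element'': Theorem~\ref{thm: turbulence characterization} speaks only about $g\cdot x$ for $g$ Cohen-generic over $V[x]$, whereas $V[[x]]_{E_a}$ quantifies over \emph{all} $y\mathrel{E_a}x$ in arbitrary further generic extensions. I expect the key lemma to be that, because the orbits are dense, any $E_a$-equivalent $y=h\cdot x$ (for arbitrary $h\in G$ in some extension) can be ``approximated'' or connected to a Cohen-generic translate, so that the intersection $\bigcap_{h} V[h\cdot x]$ is in fact realized already by generic translates. One should show that the element $b\in V[x]\cap V[g\cdot x]$ obtained from a single generic $g$ is definable from $x$ in a way invariant enough that it must appear in $V[y]$ for any $E_a$-related $y$; a symmetry/automorphism argument on the Cohen poset, together with the density of orbits ensuring $g\cdot x$ ranges densely, should do this. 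I would organize the argument so that the nontrivial content is extracting, from the single witness $b$, a canonical invariant of the $E_a$-class of $x$ that survives into every $V[y]$, thereby landing in $V[[x]]_{E_a}$ and contradicting $V[[x]]_{E_a}=V$.
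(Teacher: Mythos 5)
Your forward direction is exactly the paper's: by Theorem~\ref{thm: turbulence characterization}, turbulence gives $V[x]\cap V[g\cdot x]=V$, and $V[[x]]_{E_a}\subseteq V[x]\cap V[g\cdot x]$ finishes it. The gap is in the other direction. You correctly isolate the crux --- a single witness $b\in\bigl(V[x]\cap V[g\cdot x]\bigr)\setminus V$ must be shown to lie in $V[y]$ for \emph{every} $E_a$-equivalent $y$ in every further generic extension --- but your proposed mechanism (density of orbits, plus a symmetry argument making $b$ ``invariantly definable from $x$'') is not the right tool, and it is left entirely as a hope (``should do this''). Density of orbits is in fact irrelevant to this step: the paper proves the exact identity $V[[x]]_{E_a}=V[x]\cap V[g\cdot x]$ (Lemma~\ref{lem;actionable-intersection-of-two}) for an \emph{arbitrary} continuous action, with no density or meagerness hypothesis, and with $x$ not even assumed Cohen-generic. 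Nothing in your sketch indicates how ``$g\cdot x$ ranges densely'' would convert the one membership $b\in V[g\cdot x]$ into membership in $V[y]$ for all equivalent $y$.

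The missing argument is a group-translation trick combined with mutual genericity. Suppose $y\in V[x][H]$ is $E_a$-equivalent to $x$, where first (the main case) $H$ is generic over $V[x][g]$, so that $g$ remains Cohen-generic over $V[x][H]$. Write $y=\gamma\cdot x$ with $\gamma\in G$ in $V[x][H]$. Since $G$ acts on itself by homeomorphisms and this action is available in $V[x][H]$, the element $g\gamma$ is also Cohen-generic over $V[x][H]$, so $V[y][g\gamma]\cap V[x][H]=V[y]$ by the folklore mutual-genericity Lemma~\ref{lem;mutgen}. But $g\cdot x=(g\gamma)\cdot y\in V[y][g\gamma]$, hence $V[x]\cap V[g\cdot x]\subseteq V[y][g\gamma]\cap V[x][H]=V[y]$, which is what you need. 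The general case, where $H$ is not mutually generic with $g$, is then removed by a forcing absoluteness step: if $a\in V[x]$ and some condition forces $x\mathrel{E_a}\dot y$ and $\check a\notin V[\dot y]$, pass to an $H'$ generic over $V[x][g]$ extending that condition and apply the main case to conclude $a\notin V[g\cdot x]$. Without this lemma (or an equivalent substitute), your contrapositive does not close.
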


The key lemma in the proof of Theorem~\ref{thm: turbulence characterization intersection model} is the following. 
\begin{lem}\label{lem;actionable-intersection-of-two}
Suppose $a\colon G\curvearrowright X$ is a continuous action of a Polish group $G$ on $X$ and $E_a$ is the induced equivalence relation.
Let $x\in X$ be in \textit{some} generic extension and $g\in G$ be Cohen generic over $V[x]$. Then
\begin{equation*}
    V[[x]]_{E_a}=V[x]\cap V[g\cdot x].
\end{equation*}
\end{lem}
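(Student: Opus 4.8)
The plan is to prove the two inclusions of $V[[x]]_{E_a}=V[x]\cap V[g\cdot x]$ separately, the easy one being $\subseteq$. By definition of $V[[x]]_{E_a}$ (Definition~\ref{def;double-brackets}), it is an intersection of models $V[y]$ over all $y\in X$ with $x\mathrel{E_a}y$ in some further generic extension. Two such $y$ are $y=x$ (trivially) and $y=g\cdot x$, the latter witnessed in the further extension $V[x][g]$. Hence $V[[x]]_{E_a}\subseteq V[x]$ and $V[[x]]_{E_a}\subseteq V[g\cdot x]$, giving $V[[x]]_{E_a}\subseteq V[x]\cap V[g\cdot x]$.

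For the reverse inclusion, fix $b\in V[x]\cap V[g\cdot x]$ and an arbitrary representative $y=h\cdot x$ with $h\in G$ in a further generic extension $W\supseteq V[x][g]$; the goal is $b\in V[y]$, which then yields $b\in V[[x]]_{E_a}$ since $y$ is arbitrary. The key device is a \emph{transfer principle}: working in $V[x]$, consider the set $B=\set{t\in G}{b\in V[t\cdot x]}$. Since $b\in V[x]$, this set is definable from $b$ and $x$ and so lies in $V[x]$; since $b\in V[g\cdot x]$, we have $g\in B$. As $g$ is Cohen generic over $V[x]$ and $B\in V[x]$, granting that $B$ has the Baire property, $B$ must be comeager in some basic open $U\in V[x]$ (a meager set coded in $V[x]$ cannot contain a Cohen generic). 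Consequently $U\setminus B$ is meager and coded in $V[x]$, so \emph{every} Cohen generic over $V[x]$ that lands in $U$ belongs to $B$, i.e.\ satisfies $b\in V[t\cdot x]$.

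I would then close the argument by a mutual-genericity descent using Lemma~\ref{lem;mutgen}. Add $g_1,g_2$ mutually Cohen-generic over $W$, chosen (by forcing below conditions inside the nonempty open set $Uh^{-1}\in W$) so that $g_j\in Uh^{-1}$. Then each $g_jh$ is Cohen generic over $V[x]$ (right-translation by $h\in W$ preserves genericity over $V[x]\subseteq W$) and lies in $U$, so $g_jh\in B$, that is $b\in V[(g_jh)\cdot x]=V[g_j\cdot y]\subseteq V[y][g_j]$. Because $g_1,g_2$ are mutually generic over $V[y]\subseteq W$, Lemma~\ref{lem;mutgen} (with $N=V[y]$, $M=V[y][g_1]$, and $g_2$ generic over $M$) gives $V[y][g_1]\cap V[y][g_2]=V[y]$. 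Therefore $b\in V[g_1\cdot y]\cap V[g_2\cdot y]\subseteq V[y]$, as required.

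The main obstacle is the transfer principle, specifically the regularity of the definable set $B$: one must justify that ``$b\in V[t\cdot x]$'' cuts out a set of $t$ with the Baire property, which relies on the ambient regularity hypotheses in force (e.g.\ sharps, as invoked elsewhere in the paper) to lift the Baire property past $\boldsymbol{\Sigma}^1_1$. A secondary technical point is that $b$ need not be a real; I expect to handle this by first reducing to the case where $b$ is a set of ordinals (since each of $V[x]$, $V[g\cdot x]$, $V[y]$ is generated over $V$ by a single real and a model of $\mathrm{ZF}$ extending $V$ is determined by the $V$-relative codes of its sets of ordinals), and then expressing ``$b\in V[t\cdot x]$'' via the definability of the inner model $V[t\cdot x]$ from the real $t\cdot x$. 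Everything else — translation-invariance of Cohen genericity, continuity of the action to place $g_j\cdot y$ in $V[y][g_j]$, and the application of Lemma~\ref{lem;mutgen} — is routine.
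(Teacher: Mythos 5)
Your strategy is viable and, once repaired, yields a correct proof along a genuinely different route from the paper's; but as written it has two defects, one logical and one in the justification of your key ``transfer'' step.

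The logical defect: you fix the representative $y=h\cdot x$ in a further generic extension $W\supseteq V[x][g]$ and conclude ``$b\in V[[x]]_{E_a}$ since $y$ is arbitrary''. It is not arbitrary: Definition~\ref{def;double-brackets} quantifies over all $y$ equivalent to $x$ in \emph{any} generic extension of $V[x]$, and a given extension $V[x][H]$ containing such a $y$ need not sit inside any extension of $V[x][g]$, because $H$ need not be generic over $V[x][g]$. This is exactly the point on which the paper spends the second half of its proof: it first treats $H$ generic over $V[x][g]$, and then removes that hypothesis by a separate forcing-transfer argument (fix a condition $p$ forcing $x\mathrel{E_a}\dot y\wedge\check a\notin V[\dot y]$, and re-choose $H'$ generic over $V[x][g]$ below $p$). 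Fortunately, your argument never actually uses $g\in W$: the only role $g$ plays is to produce the open set $U$ over $V[x]$, and your descent uses only $y$, $h$, and the fresh generics $g_1,g_2$ over $W$. So the repair is simply to delete the restriction and let $W$ be an arbitrary generic extension of $V[x]$ containing $y$; then no case split is needed at all.

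The justification defect: the set $B=\set{t\in G}{b\in V[t\cdot x]}$, computed in $V[x]$, has only elements of $V[x]$, so ``$g\in B$'' is not meaningful; moreover ``$b\in V[t\cdot x]$'' is a statement about inner models, not a projective condition, so sharps buy you nothing here. None of this regularity is needed: since $g$ is Cohen generic over $V[x]$ and $V[x][g]\models b\in V[g\cdot x]$, the forcing theorem gives a basic open $U$ in the generic filter of $g$ with $U\force\check b\in V[\dot t\cdot\check x]$ (where $\dot t$ names the Cohen-generic element of $G$), and, again by the forcing theorem, \emph{every} Cohen generic $t\in U$ over $V[x]$ satisfies $b\in V[t\cdot x]$. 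This is precisely the kind of statement the paper itself forces (e.g.\ $\check a\notin V[\dot y]$ in its general case), and it makes sense for an arbitrary set $b$, so your reduction of $b$ to sets of ordinals is also unnecessary: the forcing relation and Lemma~\ref{lem;mutgen} handle arbitrary sets.

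With these repairs the proof is correct and structurally different from the paper's. The paper writes $x=\gamma\cdot y$ inside $V[x][H]$ and reuses $g$ itself, observing that $g\gamma$ is Cohen generic over $V[x][H]$ and applying Lemma~\ref{lem;mutgen} once; that reuse of $g$ is what forces its two-case structure. Your version decouples $g$ from $y$ via the condition $U$, at the price of two fresh mutually generic translates (note that $g_jh$ is generic over $V[x]$ because $g_j$ is generic over $W$ and $h\in W$), and in exchange handles all representatives $y$ uniformly.
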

Note that here $x$ is not required to be Cohen generic.
\begin{proof}
Let $z=g\cdot x$.
By definition, $V[[x]]_{E_a}\subset V[x]\cap V[z]$.
It remains to show that for any $y$ in a generic extension of $V[x]$, if $y\mathrel{E}x$ then $V[x]\cap V[z]\subset V[y]$.
Suppose first that $y\in V[x][H]$ where $H$ is $\mathbb{P}$-generic over $V[x][g]$ (not just over $V[x]$) for some $\mathbb{P}\in V[x]$. In this case, by mutual genericity, $g$ is generic over $V[x][H]$.
In $V[x][H]$, fix $\gamma\in G$ such that $y=\gamma\cdot x$.
Since $G$ acts on itself by homeomorphisms and $g\in G$ is Cohen over $V[x][H]$, then so is $g\gamma$, and therefore $V[y][g\gamma]\cap V[x][H]=V[y]$, by Lemma~\ref{lem;mutgen}.
Finally, $g\gamma\cdot y=g\cdot x=z$ is in $V[y][g\gamma]$, so $V[z]\cap V[x]\subset V[y][g\gamma]\cap V[x][H]=V[y]$,
as desired.

For the general case, let $y\in V[x][H]$ where $H$ is some $\mathbb{P}$-generic over $V[x]$, $\mathbb{P}\in V[x]$.
$H$ may not be generic over $V[x][g]$.
It suffice to show that if $a\in V[x]$ and $a\notin V[y]$ then $a\notin V[z]$.
Fix an $a\in V[x]$ and some condition $p$ forcing that $x\mathrel{E}\dot{y}$ and $\check{a}\notin V[\dot{y}]$.
Let $H'$ be $\mathbb{P}$-generic over $V[x][g]$ extending $p$. By the argument above $V[z]\cap V[x]\subset V[\dot{y}[H']]$.
Since $a\notin V[\dot{y}[H'
]]$ then $a\notin V[z]$.
\end{proof}

\begin{proof}[Proof of Theorem~\ref{thm: turbulence characterization intersection model}]
For the forward direction: take a Cohen generic $g\in G$ over $V[x]$, then $V[[x]]_{E_a}\subset V[x]\cap V[g\cdot x]=V$ (by Theorem~\ref{thm: turbulence characterization}).
For the other direction, assume that for any Cohen generic $x\in X$ the intersection model is trivial: $V[[x]]_{E_a}=V$. Then for any $g\in G$, Cohen generic over $V[x]$, it follows from Lemma~\ref{lem;actionable-intersection-of-two} that $V[x]\cap V[g\cdot x]=V[[x]]_E=V$. Now Theorem~\ref{thm: turbulence characterization} implies that $a\colon G\curvearrowright X$ is generically turbulent.
\end{proof}

These results naturally motivate the following definition of turbulence.

\begin{defn}\label{defn: generically turbulent double brackets}
Let $E$ be an analytic equivalence relation on a Polish space $X$.
Say that $E$ is \textbf{generically turbulent} if for any Cohen generic $x\in X$,
\begin{enumerate}
    \item the $E$-class of $x$ is dense and meager in $X$ and
    \item $V[[x]]_E=V$.
\end{enumerate}
\end{defn}
\begin{remark}
If $a\colon G\curvearrowright X$ is a continuous action and $E$ is the orbit equivalence relation, it follows from Theorem~\ref{thm: turbulence characterization intersection model} that the action is generically turbulent if and only if $E$ is generically turbulent with respect to the above definition.
\end{remark}



\begin{thm}\label{thm: generic turb not generic classifiable}
If $E$ is generically turbulent (as in Definition~\ref{defn: generically turbulent double brackets}) then $E$ does not admit a generically absolute classification.
\end{thm}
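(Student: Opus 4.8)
The plan is to assume toward a contradiction that $c\colon X\to I$, defined by a formula $\psi(x,A,b)$, is a generically absolute classification of $E$, and to violate Generic absoluteness (clause (3)(b)) by exhibiting, inside a single Cohen extension of $V$, two Cohen generics carrying the same invariant that are nonetheless not $E$-related. Fix $x\in X$ Cohen generic over $V$ and set $A=c(x)$; since $x\in V[x]$ and $\psi$ defines a function there (Invariance absoluteness applied to the ZF extension $V[x]$), we have $\psi^{V[x]}(x,A,b)$. The argument has two parts: first show $A\in V$, then manufacture the two generics.

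The first and main step is to show that the invariant $A$ lies in $V[[x]]_E$, whence $A\in V$ by generic turbulence, clause (2). Let $y\mathrel{E}x$ be any $E$-representative with $y\in X$ living in some further generic extension $W$ of $V[x]$. In $V[y]$, which is itself a ZF extension of $V$, Invariance absoluteness guarantees that $\psi$ defines a function, so $A_y:=c^{V[y]}(y)$ is a well-defined element of $V[y]$ and $\psi^{V[y]}(y,A_y,b)$ holds. By Locality this persists upward to $W$, giving $\psi^W(y,A_y,b)$; since $\psi$ again defines a function in $W$ and $x\mathrel{E}y$ there, we get $c^W(y)=A_y$ and hence $c^W(x)=c^W(y)=A_y$. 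On the other hand $\psi^{V[x]}(x,A,b)$ holds, so by Locality $\psi^W(x,A,b)$ and thus $c^W(x)=A$. Comparing the two computations yields $A=A_y\in V[y]$. As $y$ ranged over all $E$-representatives of $x$ in all further generic extensions, $A\in V[[x]]_E=V$.

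For the second step, since $A\in V$ and $c(x)=A$ holds in $V[x]$, some condition $p$ in the Cohen generic filter forces $c(\dot x)=\check A$. Let $x'$ be Cohen generic over $V[x]$ with $x'$ extending (a copy of) $p$; then $x'$ is Cohen generic over $V$ below $p$, so $c(x')=A=c(x)$. It remains to see that $\neg(x\mathrel{E}x')$: by clause (1) the class $[x]_E$ is meager, and being analytic with parameter $x$ it is coded in $V[x]$ (contained in a meager hull coded there), so the Cohen generic $x'$ over $V[x]$ avoids it, giving $x'\notin[x]_E$. Both $x$ and $x'$ are Cohen generic over $V$ and lie in the single Cohen extension $W=V[x][x']$, in which $c(x)=c(x')$ yet $\neg(x\mathrel{E}x')$; this contradicts Generic absoluteness, which requires $c$ to be a complete classification of $E$ in $W$.

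The routine parts are the meagerness/genericity argument of the second step and the passage from $A\in V$ to a forcing condition. I expect the genuine obstacle to be the bookkeeping in the first step, where Locality and both forms of Invariance absoluteness must be combined across the three universes $V[y]\subseteq W\supseteq V[x]$ to pin the single value $A$ simultaneously as $c^W(x)$, as $c^W(y)$, and as an element computable inside $V[y]$; it is precisely this that forces the invariant into $V[[x]]_E$ and lets generic turbulence collapse it to $V$.
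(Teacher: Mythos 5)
Your proof is correct and takes essentially the same approach as the paper's: show that the invariant $A=A_x$ of a Cohen generic lies in $V[[x]]_E$, hence in $V$ by generic turbulence, fix a condition $p$ forcing $A_{\dot x}=\check A$, and conclude that Cohen generics below $p$ would all be $E$-related, contradicting the meagerness of the $E$-classes. Your write-up simply makes explicit the steps the paper leaves implicit, namely the Locality/Invariance-absoluteness bookkeeping placing $A$ in every $V[y]$, and the use of a mutually generic $x'$ (with $\mathbb{P}\times\mathbb{P}$ equivalent to $\mathbb{P}$) to realize the contradiction inside a single Cohen extension.
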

\begin{proof}
Assume for a contradiction that $x\mapsto A_x$ is a generically absolute classification of $E$.
Let $x\in X$ be a Cohen generic over $V$. Then $A=A_x$ is in $V[[x]]_E$. By assumption, $A\in V$.
Fix a condition $p$ in the Cohen forcing for $X$ such that $p\force A_{\dot{x}}=\check{A}$.
Now any two Cohen generics extending $p$ are $E$-related, contradicting the fact that the $E$-classes are meager.
\end{proof}

Since $E_1$ admits a generically absolute classification (Claim~\ref{claim: complete classification of E1}), then so does $E_1^+$ (see Proposition~\ref{prop: jump is classifiable}).
This implies the result of Kanovei and Reeken \cite{Kanovei-Reeken}, that no turbulent orbit equivalence relation is Borel reducible to $E_1^+$.
This result is vastly generalized in \cite[Theorem 3.3.5]{Larson_Zapletal_2020}.

\subsection{Intersection numbers and a theorem of Kechris and Louveau}\label{subsection: intro intersection numbers}

\begin{thm}[{Kechris-Louveau \cite[Theorem 4.2]{Kechris-Louveau-1997}}]\label{thm;e1-not-action}
Suppose $a\colon G\curvearrowright X$ is a continuous action of a Polish group $G$ on a Polish space $X$, let $E_a$ be the induced orbit equivalence relation on $X$.
Then, on any comeager subset of $(2^\omega)^\omega$, $E_1$ is not Borel reducible to $E_a$.
\end{thm}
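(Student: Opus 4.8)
The plan is to run the argument through the intersection models $V[[\cdot]]_E$ of Definition~\ref{def;double-brackets}, exploiting the contrast between orbit equivalence relations and $E_1$: by Lemma~\ref{lem;actionable-intersection-of-two} the intersection model of a point in an orbit relation is obtained from only \emph{two} generic extensions, whereas for $E_1$ the model $M=V[[x]]_{E_1}=\bigcap_n V[x_n]$ genuinely requires an $\omega$-intersection. A Borel reduction on a comeager set would transport the finite bound from $E_a$ to $E_1$, which is impossible. First I would pass to a generic setting. Assume $f\colon C\to X$ is a Borel reduction of $E_1\restriction C$ to $E_a$ with $C\subset(2^\omega)^\omega$ comeager. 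The code of $f$ together with a code for $C$ is a real, and ``$f$ is a reduction of $E_1\restriction C$ to $E_a$ and $C$ is comeager'' is $\mathbf{\Pi}^1_2$ in these codes; by Shoenfield absoluteness it holds in every generic extension of $V$ with the same ordinals. Fix $x$ Cohen generic over $V$; then $x\in C$, $z:=f(x)\in V[x]$, and $f$ remains a reduction on $C$ in all further generic extensions of $V[x]$.

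Next I would record the two structural facts. On the orbit side, taking $g\in G$ Cohen generic over $V[x]$, Lemma~\ref{lem;actionable-intersection-of-two} gives $V[[z]]_{E_a}=V[z]\cap V[g\cdot z]$, exhibiting it as an intersection of just two models $V[w]$ with $w\mathrel{E_a}z$. On the $E_1$ side, for any $y\mathrel{E_1}x$ occurring in a generic extension there is some $N$ with $V[x_N]\subseteq V[y]$, since the tail of $y$ past the last column on which $y$ and $x$ disagree is exactly $x_N$. Consequently the intersection of finitely many such models contains $V[x_{N^\ast}]$ for $N^\ast$ the largest relevant index, and this strictly contains $M$: the column $x(N^\ast)$ lies in $V[x_{N^\ast}]$ but not in $V[x_{N^\ast+1}]\supseteq M$. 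Thus $M$ is \emph{not} the intersection of finitely many models $V[y]$ with $y\mathrel{E_1}x$; its ``intersection number'' is infinite, while that of $E_a$ is at most two.

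The heart of the proof, and where I expect the real work, is transporting this two-model description across $f$. For each $n$ the point $w_n:=f(x_n)$ lies in $V[x_n]$ and satisfies $w_n\mathrel{E_a}z$, so the single orbit $[z]_{E_a}$ is represented inside every tail model $V[x_n]$; this is the reduction-side avatar of the fact that $M=\bigcap_n V[x_n]$. I would align these representatives using the factorization $V[x]=V[x_{n+1}][x\restriction\delta_{n+1}^{\,c}]$, which presents $x$ as a Cohen extension of each tail $V[x_{n+1}]$, together with the mutual-genericity Lemma~\ref{lem;mutgen}, so that the generic rotation realizing $V[[z]]_{E_a}=V[z]\cap V[g\cdot z]$ can be taken inside the tail extensions. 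The target is to conclude that $M$ would be expressible as $V[y_0]\cap V[y_1]$ for two $y_i\mathrel{E_1}x$, contradicting the previous paragraph.

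The delicate point is that the naive equality $V[[x]]_{E_1}=V[[z]]_{E_a}$ may fail, because $V[z]$ need only remember the $E_0$-classes $[x\restriction f]$ rather than the functions $x\restriction f$ themselves, and these functions do lie in $M$. I therefore expect the decisive step to instead be a direct contradiction via the alteration Lemma~\ref{lem;alteration}: using the finite (two-model) description of the orbit together with the name-bookkeeping of Lemmas~\ref{lem;d^Z} and~\ref{lemma: restriction to dominating function}, I would produce $x'$ that is $\mathbb{P}$-generic over $V$, agrees with $x$ below a function dominating all the relevant names $d^{\dot z},d^{\check{\alpha},\dot z}$, yet with $x\not\mathrel{E_1}x'$; the rigidity coming from $V[[z]]_{E_a}$ being controlled by only two generic extensions would force $f(x')\mathrel{E_a}f(x)$, contradicting that $f$ reduces $E_1$ to $E_a$. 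Verifying precisely that the finite intersection number of $E_a$ passes to $E_1\restriction C$ — i.e., that no genuine $\omega$-intersection can be the intersection-model image of an orbit relation under a comeager Borel reduction — is the main obstacle I anticipate.
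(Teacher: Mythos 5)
Your skeleton is exactly the paper's: show the intersection number of a Cohen generic $x$ relative to $E_1$ is infinite (your second paragraph is essentially the paper's Claim~\ref{claim;E1-inf-intersection}), show orbit equivalence relations have intersection number at most $2$ (Lemma~\ref{lem;actionable-intersection-of-two}), and transport the finite bound across the reduction. But the transport step --- which you correctly flag as ``the main obstacle'' --- is precisely the proof you never give, and you abandon the direct route for a reason that is factually wrong. The ``naive equality'' $V[[x]]_{E_1}=V[[f(x)]]_{E_a}$ does not fail: it is \cite[Lemma 3.5]{Sha18}, quoted in the paper, and the paper's Lemma~\ref{lem;reduction-intersection-number} is its quantitative generalization (Borel reductions preserve intersection numbers). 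Your worry that $V[z]$, for $z=f(x)$, ``need only remember the $E_0$-classes'' overlooks an absoluteness argument: for any $y\mathrel{E_a}z$ in any generic extension, the statement $\exists u\,(u\in C\wedge f(u)\mathrel{E_a}y)$ is $\mathbf{\Sigma}^1_2$ in codes lying in $V[y]$ and is true in the big model (witnessed by $x$), so by Shoenfield absoluteness it has a witness $x_y\in V[y]$; since $f$ is a reduction on $C$ and $x_y,x\in C$ with $f(x_y)\mathrel{E_a}f(x)$, we get $x_y\mathrel{E_1}x$, hence $M=V[[x]]_{E_1}\subseteq V[x_y]\subseteq V[y]$. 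In particular $V[z]$ contains an entire $E_1$-representative of $x$, not merely invariants, so $M\subseteq V[z]$.

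This one observation closes the gap with no need for Lemma~\ref{lem;alteration}. Take $B=\{z,\,g\cdot z\}$ witnessing $V[[z]]_{E_a}=V[z]\cap V[g\cdot z]$ as in Lemma~\ref{lem;actionable-intersection-of-two}, and pick $x_y\mathrel{E_1}x$ with $x_y\in V[y]$ for each $y\in B$ as above; then
\[
V[[x]]_{E_1}\subseteq V[x_{z}]\cap V[x_{g\cdot z}]\subseteq V[z]\cap V[g\cdot z]=V[[z]]_{E_a}\subseteq V[[x]]_{E_1},
\]
where the first inclusion is by definition of $V[[x]]_{E_1}$ and the last holds because any $y'\mathrel{E_1}x$ in a generic extension contains (again by absoluteness, in case $y'\notin C$) some $u\in C\cap V[y']$ with $u\mathrel{E_1}x$, whence $f(u)\in V[y']$ is $E_a$-equivalent to $z$ and $V[[z]]_{E_a}\subseteq V[f(u)]\subseteq V[y']$. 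So the intersection number of $x$ relative to $E_1$ is at most $2$, contradicting your own (correct) infinity computation. By contrast, your proposed detour through Lemma~\ref{lem;alteration} has no visible mechanism: $E_a$ and $f$ are abstract here, so making $x'$ agree with $x$ below a dominating function gives no control whatsoever over whether $f(x')\mathrel{E_a}f(x)$; the ``rigidity'' you invoke is available for the explicit $E_0$-class invariants in the proof of Theorem~\ref{thm;E_1-classification}(2), where names for the invariants are decided below such functions, but nothing analogous is available for an arbitrary orbit equivalence relation.
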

Larson and Zapletal \cite[Theorem 4.1.1]{Larson_Zapletal_2020} gave a proof of this theorem using the intersection model $M$. We give a different proof here, using the following definition, motivated by Lemma~\ref{lem;actionable-intersection-of-two}.

\begin{defn}
Given an equivalence relation $E$ on $X$ and $x\in X$ in some generic extension, let \textbf{the intersection number of $x$ (relative to $E$)} be the minimal size of a finite set $B$ such that 
\begin{equation*}
    V[[x]]_E=\bigcap_{y\in B} V[y],
\end{equation*}
where $B$ is contained in the $E$-class of $x$ in some generic extension of $V[x]$.
If no such set exists say that the intersection number is infinite.
\end{defn}

It follows from Lemma~\ref{lem;actionable-intersection-of-two} that if $E$ is an orbit equivalence relation then the intersection number of $x$ is always $\leq 2$, for any $x\in X$ in some generic extension.
On the other hand, for a Cohen generic $x\in (2^\omega)^\omega$, the intersection number of $x$ relative to $E_1$ is infinite:
\begin{claim}\label{claim;E1-inf-intersection}
Let $x\in (2^\omega)^\omega$ be Cohen-generic.
Suppose $x_1,...,x_n$, in some further generic extension, are all $E_1$-equivalent to $x$.
Then 
\begin{equation*}
 V[[x]]_{E_1}\subsetneq V[x_1]\cap ...\cap V[x_n].   
\end{equation*}
\end{claim}
\begin{proof}
Fix $k$ large enough such that $x$ and $x_1,...,x_n$ all agree starting the $k$'th column onwards.
Then $x(k)\in  V[x_1]\cap ...\cap V[x_n]$.
However, $x(k)\notin V[[x]]_{E_1}=M$, so $M$ is strictly smaller than $V[x_1]\cap ... \cap V[x_n]$.
\end{proof}

Borel reductions respect the intersection number:

\begin{lem}\label{lem;reduction-intersection-number}
Suppose $f\colon E\to_B F$ is a (partial) Borel reduction and $x\in\dom f$ in some generic extension. 
Then the intersection number of $x$ relative to $E$ is equal to the intersection number of $f(x)$ relative to $F$.
\end{lem}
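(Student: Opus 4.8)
The plan is to show that the two intersection models coincide, $V[[x]]_E = V[[f(x)]]_F$, and moreover that any finite family of witnesses on one side transfers to a family of the same size on the other; both inequalities between the intersection numbers then follow. Throughout I use that a Borel reduction is coded by a real of $V$, so $f$ has an absolute definition: for $y\in\dom f$ lying in any generic extension, $f(y)$ is computed identically in $V[y]$ and in the ambient model, whence $f(y)\in V[y]$ and $V[f(y)]\subset V[y]$. I also assume, as is standard for a (partial) reduction, that $\dom f$ is $E$-invariant, so $y\mathrel E x$ forces $y\in\dom f$.

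For the easy direction I would prove: if $y\mathrel E x$ then $f(y)\mathrel F f(x)$ (the reduction property) and $V[f(y)]\subset V[y]$ (Borel absoluteness). Hence $V[[f(x)]]_F\subset V[f(y)]\subset V[y]$ for every such $y$, giving $V[[f(x)]]_F\subset V[[x]]_E$. Moreover, if $\{y_1,\dots,y_n\}$ witnesses that the intersection number of $x$ relative to $E$ equals $n$, then each $f(y_i)\mathrel F f(x)$ and
\[
V[[f(x)]]_F\subset\bigcap_{i} V[f(y_i)]\subset\bigcap_i V[y_i]=V[[x]]_E .
\]
Once the reverse inclusion of the two models is in hand, the outer terms are equal, so $\bigcap_i V[f(y_i)]=V[[f(x)]]_F$; thus $\{f(y_1),\dots,f(y_n)\}$ witnesses $\mathrm{in}_F(f(x))\le n$.

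The substance is the reverse direction: given any $w\mathrel F f(x)$ in a further generic extension $W$ of $V[x]$, I must produce $y\mathrel E x$ with $V[y]\subset V[w]$. The statement ``$\exists y\in\dom f\,(f(y)\mathrel F w)$'' is $\Sigma^1_1$ in the parameter $w$ together with the Borel/analytic codes for $f$ and $F$, all of which lie in $V[w]$; it holds in $W$, witnessed by $x$ itself since $f(x)\mathrel F w$. By $\Sigma^1_1$-absoluteness (Mostowski) between the transitive models $V[w]\subset W$, it already holds in $V[w]$, so there is $y\in V[w]\cap\dom f$ with $f(y)\mathrel F w$. Then $f(y)\mathrel F w\mathrel F f(x)$, and the reduction property yields $y\mathrel E x$, while $y\in V[w]$ gives $V[y]\subset V[w]$. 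Applied to an arbitrary $w\mathrel F f(x)$ this gives $V[[x]]_E\subset V[w]$, hence $V[[x]]_E\subset V[[f(x)]]_F$, completing the coincidence of the two models; applied to a witnessing family $\{w_1,\dots,w_m\}$ for $\mathrm{in}_F(f(x))=m$ it produces $y_j\mathrel E x$ with $V[y_j]\subset V[w_j]$, so that $V[[x]]_E\subset\bigcap_j V[y_j]\subset\bigcap_j V[w_j]=V[[f(x)]]_F=V[[x]]_E$, witnessing $\mathrm{in}_E(x)\le m$. The two inequalities give equality, and the infinite case is subsumed, since finiteness of one intersection number forces finiteness and equality of the other.

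The main obstacle is exactly this reverse lifting: because $f$ need be neither injective nor surjective, $w$ carries no direct recipe for an $E$-representative of $x$, and indeed $x\notin V[w]$ in general. The key point is that one does not need a uniformizing function for $f$ at all; a single application of $\Sigma^1_1$-absoluteness to the \emph{true} existential statement suffices to place some suitable $y$ inside $V[w]$, after which the reduction property does the remaining work.
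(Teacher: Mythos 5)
Your proposal is correct in its core and takes essentially the same route as the paper: the paper also transfers witness families in both directions, pushing forward along $f$ one way and, in the other direction, pulling each $F$-side witness $w$ back to some $x_w\in V[w]\cap\dom f$ with $f(x_w)\mathrel{F}w$ via Mostowski absoluteness of the statement $(\exists y)\,f(y)\mathrel{F}w$ --- exactly your key step. The only structural difference is that the paper quotes the model identity $V[[x]]_E=V[[f(x)]]_F$ from \cite{Sha18}*{Lemma 3.5} rather than proving it inline as you do (by the same absoluteness idea), so your write-up is self-contained where the paper defers to a citation.

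One caveat: you assume that $\dom f$ is $E$-invariant, and this is not part of the paper's notion of partial reduction (``the domain of $f$ is a Borel subset of the domain of $E$, and $f$ is a reduction of $E\restriction\dom f$ to $F$''), nor does it hold in the intended application, where $\dom f$ is merely a comeager subset of $(2^\omega)^\omega$ (Theorem~\ref{thm;e1-not-action}). You use invariance only in the ``easy'' direction, to form $f(y_i)$ for witnesses $y_i\mathrel{E}x$; without invariance that step fails as written, since $f(y_i)$ may be undefined. The repair is your own trick applied once more: given $y\mathrel{E}x$ in a generic extension of $V[x]$, the $\Sigma^1_1$ statement $(\exists z\in\dom f)\,z\mathrel{E}y$ is true there (witnessed by $x$), hence true in $V[y]$ by Mostowski absoluteness, producing $z\in V[y]\cap\dom f$ with $z\mathrel{E}y\mathrel{E}x$; then $f(z)\mathrel{F}f(x)$ and $V[f(z)]\subset V[z]\subset V[y]$, so the push-forward direction goes through with $f(z)$ in place of $f(y)$. (The paper's own proof is terse at the same point --- it writes $f(y)$ for $y\in D$ without checking $y\in\dom f$ --- so this is a shared loose end, but in your version it has been promoted to an explicit hypothesis that the lemma's statement does not grant.)
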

By a partial Borel reduction we mean that the domain of $f$ is a Borel subset of the domain of $E$, and $f$ is a reduction of $E\restriction \dom f$ to $F$.
The lemma is a generalization of \cite[Lemma 3.5]{Sha18}, which states that if $f\colon X\to Y$ is a (partial) Borel reduction of $E$ to $F$, and $x\in\dom f$, in some generic extension, then  $V[[x]]_E=V[[f(x)]]_F$.
\begin{proof}
Assume first that $V[[f(x)]]_F=\bigcap_{y\in B}V[y]$ where $B$ is contained in the $F$-class of $f(x)$ in some big generic extension $V[G]$.
For each $y\in B$, $f(x) \mathrel{F} y$ in $V[G]$. By absoluteness for the statement $(\exists x)f(x) \mathrel{F} y$ there is $x_y\in V[y]$ such that $f(x_y)\mathrel{F}y$, thus $x_y \mathrel{E} x$ for each $y\in B$ (since $f$ is a reduction). 
Now $\bigcap_{y\in B}V[x_y]\subset \bigcap_{y\in B}V[y]=V[[f(x)]]_F=V[[x]]_E$. It follows that $\bigcap_{y\in B}V[x_y]=V[[x]]_E$, so the intersection number of $x$ is $\leq |B|$.

Similarly, suppose $V[[x]]_E=\bigcap_{y\in D}V[y]$ where $D$ is contained in the $E$-class of $x$, in some big generic extension. Then $\set{f(y)}{y\in D}$ is contained in the $F$-class of $f(x)$, and $V[[f(x)]]_F=\bigcap_{y\in B}V[f(y)]$, so the intersection number of $f(x)$ is $\leq|D|$. 
We conclude that the intersection numbers of $x$ and $f(x)$ are the same.
\end{proof}

\begin{proof}[Proof of Theorem~\ref{thm;e1-not-action}]
Assume for contradiction that there is a reduction $f$, defined on a comeager subset of $\R^\omega$, reducing $E_1$ to some equivalence relation $E$ induced by a Polish group action.
Let $x\in \R^\omega$ be Cohen generic over $V$, so $x$ is in the domain of $f$.
By lemmas \ref{lem;actionable-intersection-of-two} and \ref{lem;reduction-intersection-number} it follows that the intersection number of $x$ is $2$, contradicting claim~\ref{claim;E1-inf-intersection}.
\end{proof}
\begin{remark}
In the proof above, $x$ is taken to be a Cohen generic real in the domain of $E_1$. However, we have no control over what $f(x)$ looks like. This is why we need to consider arbitrary reals $x\in X$ in Lemma~\ref{lem;actionable-intersection-of-two}.
\end{remark}

\begin{question}[see \cite{Kechris-Louveau-1997}]\label{question;KL}
If $E$ is an analytic equivalence relation, is it true that either $E$ is Borel reducible to an orbit equivalence relation or $E_1\leq E$?
\end{question}

The proof above suggests the following strategy for a counterexample: suppose we can find an analytic equivalence relation $E$ such that:
\begin{enumerate}
    \item For any $x\in \dom E$ in a generic extension the intersection number of $x$ is finite;
    \item there is $x\in \dom E$ in a generic extension whose intersection number is strictly greater than 2.
\end{enumerate}
Part (1) would imply that $E_1\not\leq_B E$ and part (2) that $E$ is not Borel reducible to an orbit equivalence relation.
On the other hand, the following would support a positive answer to Question~\ref{question;KL}. 
\begin{question}
If $E$ is an analytic equivalence relation, $x$ in the domain of $E$ in some generic extension, must the intersection number of $x$ be either $\leq 2$ or infinite?
\end{question}

\section{Chromatic numbers in the intersection model}\label{sec;chromatic-number}

Consider the shift graph $(\mathcal{P}(\omega),S)$, where for $X\subset \omega$, $S(X)=X\setminus\min X$. (We identify $\mathcal{P}(\omega)$ with $2^\omega$ in the usual way.)
This is an acyclic graph, so its chromatic number is 2 in a ZFC model.
\begin{prop}\label{prop;ch-number-not-2}
In $M$, the chromatic number of the shift graph is not 2.
\end{prop}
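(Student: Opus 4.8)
The plan is to reduce the statement to a choice-failure in $M$ of the same flavor as Lemma~\ref{lem;no-choice-b-E0-classes}, and then run a forcing-homogeneity argument. First I would isolate the combinatorial content of a $2$-coloring. Since $S$ strictly shrinks sets, the graph is acyclic, and its connected components split into the component of $\emptyset$ (the finite sets, canonically $2$-colored by the parity of $|X|$, with no choice involved) and the components of infinite sets. For infinite $X,Y$ one checks $S^a(X)=S^b(Y)$ for some $a,b$ exactly when $X$ and $Y$ are tail-equal, i.e. $X\mathrel{E_0}Y$; thus the infinite-set components are precisely the $E_0$-classes. Within one component, fixing a common tail $T$ gives $c(X)=c(T)\oplus(|X\setminus T|\bmod 2)$, so any proper $2$-coloring satisfies $c(X)\oplus c(Y)\equiv|X\triangle Y|\pmod 2$. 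Consequently the two color classes of $c$ on a component coincide with the two classes of the canonical ``even/odd symmetric difference'' partition, which is $E_0$-invariant and hence definable without choice. The upshot: a proper $2$-coloring $c\in M$ yields, in $M$, a \emph{selector} choosing one of the two canonical parity-halves of each relevant $E_0$-class.

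The contradiction then comes from a single-coordinate flip. Suppose $c\in M$ is such a coloring; since $c\in V[x_n]$ for every $n$, fix tail-coherent names as in Lemma~\ref{lem;d^Z}, so that $c$ is realized from $x_{n+1}$ by a $\mathbb{P}^{n+1}$-name. For a component whose elements are supported on a region meeting a low column $n$, consider the automorphism $\pi$ of $\mathbb{P}$ flipping a single coordinate $(n,m)$. Because $\pi$ acts trivially on all coordinates in columns $\geq n+1$, it fixes $x_{n+1}$, and therefore by Lemma~\ref{lem;d^Z} it fixes \emph{every} element of $M$ — in particular it fixes $c$ and fixes each canonical parity-half as a set-theoretic object of $M$. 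On the other hand, flipping the bit $(n,m)$ changes each member of the component in exactly one place, an odd alteration of the symmetric difference, so $\pi$ \emph{interchanges} the two parity-halves. Hence the half selected by $c$ is simultaneously fixed by $\pi$ (as $c$ is fixed) and moved by $\pi$ (as the halves are swapped), which is absurd. This is the heart of the proof.

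The main obstacle is the same tension that drives Lemma~\ref{lem;no-choice-b-E0-classes}: the flipped coordinate $(n,m)$ must lie in the \emph{support} of the chosen component (so that the flip genuinely swaps its two halves), yet its column $n$ must be low enough that $\pi$ fixes $c$ via its $\mathbb{P}^{n+1}$-name. I expect to resolve this exactly as there — passing to an unbounded set of indices $\alpha$, thinning to a single condition $p$, choosing a column $n$ with $\set{f_\alpha(n)}{\alpha}$ unbounded and with $\set{(n,m)}{m<\omega}$ disjoint from the relevant domains, and then running the flip using the $\mathbb{P}^{n+1}$-name for $c$. The genuinely delicate point, which I would address using Corollary~\ref{cor: reals in M} and the analysis of reals in $M$, is to guarantee that the component selected actually has enough members in $M$ for $c$ to act on, so that the induced selector $\sigma$ of parity-halves is a well-defined object of $M$; this is where the choiceless structure of $M$ (and the fact that a single $E_0$-class carries no canonical basepoint) must be used, and it is the step I anticipate requiring the most care.
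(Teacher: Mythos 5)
Your proposal is correct, and its skeleton coincides with the paper's: the same witnessing family (the reals $z_\alpha$ obtained by restricting $x$ to the graphs of a $<^\ast$-increasing unbounded family $\seqq{f_\alpha}{\alpha<\mathfrak{b}}$ from $V$), the same key input ($c\in M\subset V[x_{n+1}]$, so $c$ is realized by a $\mathbb{P}^{n+1}$-name $\tau$), and the same combinatorial core (thin to an unbounded $X\subset\mathfrak{b}$ and a single condition $p$ deciding the colors of all $z_\alpha$, $\alpha\in X$, then use unboundedness of $\set{f_\alpha(n)}{\alpha\in X}$ to find a coordinate $(n,f_\alpha(n))$ left free by the finitely many conditions in play). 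Where you genuinely differ is the endgame. The paper uses no automorphism: it compares the color of $z_\alpha$ with that of its shift $z_{\alpha,m+1}$, and gets the contradiction from two extensions $r_0,r_1$ deciding the parity of $|\set{k\leq m}{z_\alpha(k)=1}|$ (the number of $S$-steps between them) in two ways. Your one-bit flip $\pi$ replaces this: since a proper $2$-coloring must separate sets differing in exactly one place, and $\pi$ fixes $\tau$, $\dot{x}_{n+1}$, and any condition $q$ omitting $(n,f_\alpha(n))$, while sending $\dot{z}_\alpha$ to a name for $\dot{z}_\alpha\triangle\{n\}$, the condition $q=\pi(q)$ (which also forces $\tau[\dot{x}_{n+1}]$ to be a proper coloring) forces both $\tau(\dot{z}_\alpha)=\epsilon$ and $\tau(\dot{z}_\alpha)=1-\epsilon$. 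This trade is clean --- no shifted reals, no step-counting, and your parity-halves framing makes the link to choice-failure for pairs transparent --- while the paper's version avoids automorphism bookkeeping. Three cautions for the write-up. First, ``$\pi$ fixes every element of $M$'' is meaningful only name-relatively: $\pi$ fixes the realization of the tail name $\tau$ but moves the realization of the natural name of $z_\alpha$, so the contradiction must be run at the level of forcing as above, not by treating $\pi$ as an action on sets. Second, mind the order of choices: $\tau$ and the condition $q\leq p$ forcing $\dot{\sigma}[\dot{x}]=\tau[\dot{x}_{n+1}]$ exist only after $n$ is fixed, so you cannot demand that the whole column $n$ avoid $\dom q$; you can only pick $\alpha\in X$ with $(n,f_\alpha(n))\notin\dom q$, exactly as the paper does. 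Third, the point you flagged as delicate is a non-issue: $E_0$-classes are closed under finite modifications, and finite modifications of members of $M$ remain in $M$, so the component of $z_\alpha$ in $M$ is its full $E_0$-class and your induced selector is automatically an object of $M$.
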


\begin{remark}
The following alternative intersection model was studied in \cite{Larson_Zapletal_2020}. Let $c\colon\omega\times\omega_1\to\{0,1\}$ be Cohen-generic over $V$. Define $M'=\bigcap_{n<\omega}V[c\restriction(\omega\setminus n)\times\omega_1]$.
This model was considered precisely to provide an example of an intersection model in which the axiom of choice fails. Indeed, they show that in $M'$ the shift graph does not have chromatic number 2.
The argument below is similar, using function in $\omega^\omega$ to get uncountably many reals out of $\omega$ many Cohen reals, as before.
Note that both $M'$ and $M$ are models of DC.
\end{remark}
\begin{proof}[Proof of Proposition~\ref{prop;ch-number-not-2}]
In $V$, let $\seqq{f_\alpha}{\alpha<\mathfrak{b}}$ be an $<^\ast$-increasing and unbounded sequence of function in $\omega^\omega$ such that each $f_\alpha$ is increasing.
Let $x\in 2^{\omega\times\omega}$ be the Cohen generic over $V$, where $M=\bigcap_{n<\omega}V[x_n]$, $x_n=x\restriction (\omega\setminus n)\times\omega$.
Define $z_\alpha\in 2^\omega$ by $z_\alpha(n)=x(n,f_\alpha(n))$, the restriction of $x$ to the graph of $f_\alpha$.

As before, each $z_\alpha$ is in $M$, but $\seqq{z_\alpha}{\alpha<\mathfrak{b}}$ is not in $M$ (similar to Lemma~\ref{lem;no-choice-b-E0-classes}).
For $m<\omega$, define $z_{\alpha,m}\in 2^\omega$ by $z_{\alpha,m}(k)=
    \begin{cases}
               0 & k < m;\\
               z_\alpha(k) & k \geq m.
    \end{cases}
$ 
So $z_{\alpha,0}=z_\alpha$ and $z_{\alpha,m}$ is the result of finitely many application of $S$ to $z_\alpha$.
Let $A=[\set{z_\alpha}{\alpha<\mathfrak{b}}]_{E_0}$, all reals for which there is some $z_\alpha$ with which they agree up to finitely many values.
Note that $A\in M$ and $\set{z_{\alpha,m}}{\alpha<\mathfrak{b},\,m<\omega}\subset A$.
We will show that in $M$ there is no 2-coloring of $(A,S)$.

Assume towards a contradiction that $\sigma\colon A\to \{0,1\}$ is a coloring, $\sigma\in M$. 
Let $\dot{\sigma}$ be a name forced to be a 2-coloring of $A$ in $M$.
Working in $V$, find conditions $p_\alpha\in\mathbb{P}$ and $\epsilon_\alpha\in\{0,1\}$ such that $p_\alpha\force\dot{\sigma}(\dot{z}_\alpha)=\epsilon_\alpha$ for any $\alpha\in X$.
Since $\mathbb{P}$ is countable, we may find an unbounded $X\subset\mathfrak{b}$, a single condition $p\in\mathbb{P}$ and $\epsilon\in\{0,1\}$ such that for $\alpha\in X$, $p_\alpha=p$ and $\epsilon_\alpha=\epsilon$.
Since $\seqq{f_\alpha}{\alpha\in X}$ is $<^\ast$-unbounded there is $m<\omega$ for which $\set{f_\alpha(m)}{\alpha\in X}$ is unbounded in $\omega$.

Let $\dot{\sigma}_{m+1}$ be a $\mathbb{P}_{m+1}$-name and $q\in\mathbb{P}$ extending $p$ such that $q\force \dot{\sigma}[\dot{x}]=\dot{\sigma}_{m+1}[\dot{x}_{m+1}]$. (This is possible since $\sigma$ is forced to be in $V[\dot{x}_{m+1}]$.)
Fix $\alpha\in X$ such that $(m,f_\alpha(m))$ is not in the domain of $q$.
Working with the poset $\mathbb{P}_{m+1}$, consider the condition $q\restriction (\omega\setminus (m+1)\times\omega)$ and extend it to $t\in\mathbb{P}_{m+1}$ such that $t\force \dot{\sigma}_{m+1}(\dot{z}_{\alpha,m+1})=\delta$ for some $\delta\in\{0,1\}$.
Now $q\cup t$ is a condition in $\mathbb{P}$ forcing that $\dot{\sigma}(\dot{z}_\alpha)=\epsilon$ and $\dot{\sigma}(\dot{z}_{\alpha,m+1})=\delta$.

The only coordinates on which $z_\alpha$ and $z_{\alpha,m+1}$ differ are $0,...,m$, and at least one of these, the $m$'th coordinate, is undecided by $q\cup t$. So we can find two extensions of $q\cup t$, $r_0,r_1$ such that for each $i=0,1$, $r_i$ decides $z_\alpha(0),...,z_\alpha(m)$ and that the parity of $|\set{k\in\{0,...,m\}}{z_\alpha(k)=1}|$ is $i$, respectively. 
Since $\sigma$ is a 2-coloring, $r_0$ forces that $z_\alpha$ and $z_{\alpha,m+1}$ have the same color, that is, $\epsilon=\delta$, and $r_i$ forces that $\epsilon\neq\delta$, a contradiction.
\end{proof}

\begin{question}
What is the chromatic number of $(\mathcal{P}(\omega),S)$ in $M$?
\end{question}

\begin{remark}
When restricting to just the set $A$ defined above, the graph $(A,S)$ has chromatic number precisely 3 in $M$.
Conley and Miller \cite{Conley-Miller-16} proved in a very general setting that acyclic graphs, such as the shift graph, admit Borel 3-colorings, when restricted to a comeager set.
The set $A$ is ``sufficiently generic'' for their argument, as follows.
It suffices to find a forward recurring independent subset of $A$. 
Let $B=$ all $z\in A$ of the form $z=0...0110\ast\ast\ast...$. That is, all $z$'s in which the first appearance of 1 is followed by 10.
$B$ is forwards recurring, by genericity of the members of $A$, and $B\cap S(B)=\emptyset$, as any $y\in S(B)$ is of the form $y=0...010\ast\ast\ast$.
\end{remark}

\section{Further questions}

Let $\mathcal{P}$ be a collection of forcing notions. Say that a map $c$ is a $\boldsymbol{\mathcal{P}}$-\textbf{absolute classification} of $E$ if, as in Definition~\ref{defn;generic-classification}, it satisfies (1), (2), and (3)(a), and (3)(b) for all generic extensions by posets in $\mathcal{P}$.
This notion still respects Borel reducibility, as in Remark~\ref{remark: gen class respects reducibility}. It may not respect the usual non-classifiability results. For example, a turbulent equivalence relation may become classifiable.

Following \cite{Zapletal-idealized-2008} and \cite{Kanovei-Sabok-Zapletal-2013}, it seems reasonable to study $\{P_I\}$-absolute classifications, where $I$ is a proper ideal on a Polish space, and $P_I$ is the poset of Borel sets modulo $I$. A particularly natural notion is a \textbf{random absolute classification}, where $I$ is the ideal of measure zero sets. In this measure theoretic context, Larson and Zapletal developed an analogue for turbulence \cite[Definition 3.6.1]{Larson_Zapletal_2020}.
By \cite[Theorem 3.6.2]{Larson_Zapletal_2020}, this measure-theoretic turbulence implies that $V[[x]]_E=V$ when $x$ is random-generic over $V$. It follows that for such equivalence relations no random absolute classification exists.

Similarly, given a proper ideal $I$ on $X^\omega$, it would be interesting to understand the tail intersection model corresponding to a $P_I$-generic $x\in X^\omega$.
This also relates to the study of intersection models of coherent sequences as in \cite[Chapter 4]{Larson_Zapletal_2020}.

\section{Appendix}
In this section we prove some statements about generic classifications which were mentioned in the introduction, Remark~\ref{remark: gen class respects reducibility}, Example~\ref{example: gen class =R no ord inv}, and Example~\ref{example: gen class E0 no power of ordinal invs}. These follow from well known facts about forcing. 
We also provide some more context and justification to the fact that ``absolute generic classification'' extends the notion of ``classification by countable structures''.
Some basic facts are presented in the more general context of $\mathcal{P}$-absolute classifications.
Fix a collections of posets $\mathcal{P}$.
\begin{remark}
Suppose $\mathbb{P}$ and $\mathbb{Q}$ are posets such that $\mathbb{Q}$ embeds into $\mathbb{P}$, as a forcing poset. Then any $\mathbb{Q}$-generic extension of $V$ can be further extended to a $\mathbb{P}$-generic extension of $V$. If $c$ is a complete classification which remains absolute in all $\mathbb{P}$-generic extensions (as in (3)(b) of Definition~\ref{defn;generic-classification}), then it is also true in all $\mathbb{Q}$-generic extensions.
In conclusion, we may assume that $\mathcal{P}$ is closed under subforcings and forcing equivalence.
\end{remark}

\begin{prop}\label{prop; Apdx borel reduction respects class}
Suppose $E$ and $F$ are analytic equivalence relations on Polish spaces $X$ and $Y$, $f\colon X\to Y$ is a Borel reduction of $E$ to $F$, and $c\colon Y\to I$ is a $\mathcal{P}$-absolute classification of $F$. Then $c\circ f$ is a $\mathcal{P}$-absolute classification of $E$.
\end{prop}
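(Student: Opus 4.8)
The plan is to classify $x\in X$ by first pushing it through $f$ and then applying the given classification $c$ of $F$; the only new ingredient is to absorb a Borel code for $f$ into the defining parameter, so that the composite is again captured by a single set-theoretic formula. Fix a Borel code $r$ for the graph of $f$, and let $\psi$ be the formula witnessing Definability for $c$ with parameter $b$. Setting $b'=(b,r)$, define
\[
  \chi(x,A,b')\;\equiv\;\exists y\,\big(R_r(x,y)\,\wedge\,\psi(y,A,b)\big),
\]
where $R_r(x,y)$ asserts that $(x,y)$ belongs to the Borel set coded by $r$, so that $R_r(x,y)$ holds exactly when $y=f(x)$. I would then verify the four clauses of a $\mathcal{P}$-absolute classification for $\chi$ in turn.

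Two standard absoluteness facts drive the argument. First, a Borel code defines the same Borel relation across all models of $\mathrm{ZF}$ containing it (see \cite[\S 25]{Jech2003}, \cite{Kechris-DST-1995}); in particular, for a fixed $y$ the assertion $R_r(x,y)$ is $\boldsymbol{\Delta}^1_1$ with parameter $r$ and hence absolute, and $R_r$ continues to define a total single-valued function wherever $f$ did. Second, since $E$ and $F$ are analytic with absolute definitions, the statement that $f$ reduces $E$ to $F$, namely $\forall x\,\forall x'\,(x\mathrel{E}x'\leftrightarrow f(x)\mathrel{F}f(x'))$, is $\boldsymbol{\Pi}^1_2$, so by Shoenfield absoluteness it persists from $V$ to every generic extension. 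Clause (1), Definability, is then immediate, since $c(f(x))=A$ iff $\psi(f(x),A,b)$ iff $\chi(x,A,b')$. For clause (2), Locality: if $V\subseteq W$ and $\chi^V(x,A,b')$ with $x,A\in V$, take the witness $y=f(x)\in V$; then $R_r(x,y)$ holds in $W$ by absoluteness of the code, and $\psi^W(y,A,b)$ holds by Locality for $c$, whence $\chi^W(x,A,b')$.

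For clause (3)(a), Invariance absoluteness, observe that in any $\mathrm{ZF}$ extension $W$ the formula $\chi$ still defines a total single-valued function, as both $R_r$ and the map defined by $\psi$ do (the latter by Invariance absoluteness for $c$); and $E$-invariance follows by composition, since $x\mathrel{E}x'$ gives $f(x)\mathrel{F}f(x')$ by the forward direction of the reduction, and then $c(f(x))=c(f(x'))$ by $F$-invariance of $c$. For clause (3)(b), let $W$ be a generic extension by a poset in $\mathcal{P}$: here $c$ is a complete classification of $F$ in $W$ by hypothesis, and $f$ remains a reduction of $E$ to $F$ in $W$ by Shoenfield absoluteness of the $\boldsymbol{\Pi}^1_2$ reduction statement, so chaining the equivalences yields $x\mathrel{E}x'\Leftrightarrow f(x)\mathrel{F}f(x')\Leftrightarrow c(f(x))=c(f(x'))$, i.e.\ $\chi$ is a complete classification of $E$ in $W$. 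The step that genuinely requires care — and which I regard as the crux — is ensuring that $f$ continues to be a reduction in the relevant extensions; this is precisely where the analyticity of $E$ and $F$ (making the reduction statement $\boldsymbol{\Pi}^1_2$) together with Shoenfield absoluteness are used, and it is the reason one needs $E$ and $F$ to have absolute definitions rather than merely being Borel relations computed in $V$.
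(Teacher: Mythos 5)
Your proof is correct and follows essentially the same route as the paper's: compose the classification with the reduction, and observe that the statement ``$f$ is a Borel reduction of $E$ to $F$'' is ($\boldsymbol{\Pi}^1_2$, hence by Shoenfield) absolute to the relevant extensions. The paper's own proof is just a two-sentence version of this; your additional bookkeeping (absorbing a Borel code for $f$ into the parameter and checking each clause of Definition~\ref{defn;generic-classification}) fills in details the paper leaves implicit, but introduces no new idea.
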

\begin{proof}
Since $f$ is a reduction, $c\circ f$ is a complete classification of $E$ whenever $c$ is a complete classification of $F$. The proposition follows as the statement ``$f$ is a Borel reduction from $E$ to $F$'' is absolute in all forcing extensions.
\end{proof}

Recall the definition of the Friedman-Stanley jump operator $E\mapsto E^+$ from the introduction.
Consider also the following product operation on equivalence relations. Given an equivalence relation $E$ on $X$, $E^\omega$ is defined on $X^\omega$ by $\seqq{x_n}{n<\omega}\mathrel{E^\omega}\seqq{y_n}{n<\omega}$ if $x_n\mathrel{E} y_n$ for all $n<\omega$.
Both these operation preserve ``classifiability by countable structures''. 
Similarly, they preserve generic classifications.
\begin{prop}
Suppose $E$ is an analytic equivalence relation on a Polish space $X$ and $c\colon X\to I$ is a $\mathcal{P}$-absolute classification of $E$.
Then the map $c^\omega\colon X^\omega\to I^\omega$ defined by $c^\omega(\seqq{x_n}{n<\omega})=\seqq{c(x_n)}{n<\omega}$ is a $\mathcal{P}$-absolute classification of $E^\omega$.
\end{prop}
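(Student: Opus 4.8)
The plan is to build the classifying formula for $E^\omega$ by applying $\psi$ coordinate-wise and then to check that each of the four clauses of a $\mathcal{P}$-absolute classification descends to the product from the corresponding clause for $c$, the key recurring fact being that $\omega$ (and hence the notion of being a sequence indexed by $\omega$) is absolute among the ZF models in play. Concretely, if $\psi$ and $b$ witness that $c$ is a $\mathcal{P}$-absolute classification of $E$, I would set
\begin{equation*}
    \psi^\omega(\bar x,\bar A,b)\iff \bar x,\bar A\text{ are functions with domain }\omega \text{ and } \forall n<\omega\,\psi(\bar x(n),\bar A(n),b),
\end{equation*}
a set-theoretic formula with the same parameter $b$; this gives clause (1) [Definability] at once and is exactly the formula defining $c^\omega$.

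Next I would verify Locality and Invariance absoluteness. For clause (2), given $V\subset W$ with $\bar x,\bar A\in V$ and $(\psi^\omega)^V(\bar x,\bar A,b)$, each component $\bar x(n),\bar A(n)$ lies in $V$ and satisfies $\psi^V(\bar x(n),\bar A(n),b)$, so Locality of $\psi$ applied termwise yields $\psi^W(\bar x(n),\bar A(n),b)$ for every $n$; since $\omega$ is absolute the universal statement transfers, giving $(\psi^\omega)^W$. For clause (3)(a), in any ZF extension $W$ the Invariance absoluteness of $\psi$ makes $\psi(\cdot,\cdot,b)$ a well-defined $E$-invariant function, so $\bar A$ is uniquely determined coordinate-wise and $\psi^\omega$ defines a function; if $\bar x\mathrel{E^\omega}\bar y$ then $\bar x(n)\mathrel{E}\bar y(n)$ for all $n$, whence $c(\bar x(n))=c(\bar y(n))$ and $c^\omega(\bar x)=c^\omega(\bar y)$.

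The only clause that uses completeness rather than mere invariance is (3)(b), and this is where I would finish. Let $W$ be a generic extension of $V$ by a poset in $\mathcal{P}$, so by (3)(b) for $c$ the map $c$ is a complete classification of $E$ in $W$. If $c^\omega(\bar x)=c^\omega(\bar y)$ in $W$ then $c(\bar x(n))=c(\bar y(n))$ for each $n$, and completeness of $c$ in $W$ forces $\bar x(n)\mathrel{E}\bar y(n)$ for every $n$, i.e.\ $\bar x\mathrel{E^\omega}\bar y$; together with (3)(a) this shows $c^\omega$ completely classifies $E^\omega$ in $W$. I would also note in passing that the statement is well posed, since $E^\omega$ is analytic (a countable intersection of analytic relations) and $X^\omega$ is Polish. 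The proof is routine, with essentially one delicate point: the upward transfer of the quantifier $\forall n$ in Locality. This is exactly where I expect the main obstacle to lie, and it is resolved by the absoluteness of $\omega$, which lets me reduce the transfer to a termwise application of Locality for $\psi$.
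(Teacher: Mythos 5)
Your proof is correct and follows essentially the same route as the paper's: define the classifying formula coordinate-wise and verify clauses (1), (2), (3)(a), (3)(b) by termwise application of the corresponding properties of $c$, with completeness in $\mathcal{P}$-generic extensions used only for (3)(b). The paper's proof is just a terser version of yours, so nothing needs to change.
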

\begin{proof}
The map $c^\omega$ is definable, using a definition of $c$. Parts (1) and (2) of Definition~\ref{defn;generic-classification} follow.
Since $c$ is an $E$-invariant map in any extension, so is $c^\omega$. Finally, in any generic extension by a poset in $\mathcal{P}$, if $c^\omega(\seqq{x_n}{n<\omega})=c^\omega(\seqq{y_n}{n<\omega})$, then $c(x_n)=c(y_n)$ for each $n$, and therefore $x_n\mathrel{E}y_n$ for each $n$, since $c$ is a $\mathcal{P}$-absolute classification.
\end{proof}

\begin{prop}\label{prop: jump is classifiable}
Suppose $E$ is an analytic equivalence relation on a Polish space $X$ and $c\colon X\to I$ is a $\mathcal{P}$-absolute classification of $E$.
Then the map $c^+\colon X^\omega\to\mathcal{P}(I)$ defined by $c^+(\seqq{x_n}{n<\omega})=\set{c(x_n)}{n\in\omega}$ is a $\mathcal{P}$-absolute classification of $E^+$.
\end{prop}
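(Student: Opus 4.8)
The plan is to verify the three required properties of Definition~\ref{defn;generic-classification} for $c^+$, following the same pattern as the preceding proposition for $c^\omega$. First I would observe that $c^+$ is definable from a definition of $c$: the formula $\psi^+$ asserting $c^+(\seqq{x_n}{n<\omega})=A$ simply says that $A$ is the set of values $c(x_n)$, which is expressible using the formula $\psi$ defining $c$ together with a bounded quantifier over $\omega$. This gives Definability (clause (1)). For Locality (clause (2)), I would note that membership in $c^+(\seqq{x_n}{n})$ is witnessed by the individual statements $\psi(x_n,c(x_n),b)$, each of which is upward absolute from $V$ to $W$ by the Locality of $c$; the remaining content of $\psi^+$ is the assertion that nothing else lies in the set, and since $c$ is a genuine function in both models (by Invariance absoluteness) the set $\set{c(x_n)}{n}$ computed in $V$ and in $W$ coincide.

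Next I would establish clause (3)(a), Invariance absoluteness, in an arbitrary ZF extension $W$. Since $c$ defines an $E$-invariant map in $W$, the map $c^+$ is a well-defined function in $W$, and if $\seqq{x_n}{n}\mathrel{E^+}\seqq{y_n}{n}$ then by definition of $E^+$ every $x_n$ is $E$-equivalent to some $y_m$ and vice versa, so $\set{c(x_n)}{n}=\set{c(y_m)}{m}$, i.e. $c^+$ respects $E^+$. Finally, for clause (3)(b), Generic absoluteness, I would work in a generic extension $W$ by a poset in $\mathcal{P}$ and suppose $c^+(\seqq{x_n}{n})=c^+(\seqq{y_m}{m})$. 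Then for each $n$ there is $m$ with $c(x_n)=c(y_m)$, and since $c$ is a $\mathcal{P}$-absolute classification it is a complete classification in $W$, so $x_n\mathrel{E}y_m$; the symmetric argument gives the reverse, and together these are exactly the definition of $\seqq{x_n}{n}\mathrel{E^+}\seqq{y_m}{m}$.

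I do not expect a genuine obstacle here — the proposition is essentially the observation that the Friedman-Stanley jump of a generically absolute classification is obtained by applying the classification coordinatewise and collecting the values into a set, with each clause of Definition~\ref{defn;generic-classification} transferring from $c$ to $c^+$ mechanically. The only point requiring a little care is Locality: one must check that the \emph{set} of invariants $\set{c(x_n)}{n}$ is absolutely computed, which relies on $c$ being a total function in every ZF extension (guaranteed by Invariance absoluteness) so that no new coordinate values appear and none disappear when passing to a larger model. Everything else is a direct unwinding of the definitions of $E^+$ and of $\mathcal{P}$-absolute classification.
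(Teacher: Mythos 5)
Your proof is correct. The paper takes a shorter route: it factors $c^+ = s \circ c^\omega$, where $s\colon I^\omega\to\mathcal{P}(I)$ sends $\seqq{A_n}{n<\omega}$ to $\set{A_n}{n\in\omega}$, observes that $s$ is definable in an absolute way in all extensions, and invokes the immediately preceding proposition that $c^\omega$ is a $\mathcal{P}$-absolute classification of $E^\omega$. The underlying computations are the same as yours; in particular, completeness of $c^+$ in a generic extension cannot be read off from the composition purely formally (since $s$ is far from injective), and both arguments ultimately rest on the matching step you spell out: $\set{c(x_n)}{n\in\omega}=\set{c(y_m)}{m\in\omega}$ gives, for each $n$, some $m$ with $c(x_n)=c(y_m)$, hence $x_n\mathrel{E}y_m$ by completeness of $c$ in that extension, and symmetrically, which is exactly $E^+$-equivalence. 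What the paper's factoring buys is brevity and reuse of the $c^\omega$ proposition; what your direct verification buys is that the one genuinely delicate point is made explicit rather than absorbed into ``parts (1) and (2) follow'': Locality of $c^+$ requires that $\psi$ define a single-valued total function in the larger model (which is clause (3)(a) for $c$), so that no values enter or leave the set $\set{c(x_n)}{n\in\omega}$ when passing from $V$ to $W$.
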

\begin{proof}
Note that $c^+$ can be written as the composition $c^+=s\circ c^\omega$, where $s\colon I^\omega\to\mathcal{P}(I)$ is defined by $s(\seqq{A_n}{n<\omega})=\set{A_n}{n\in\omega}$.
Note that $s$ is definable in an absolute way in all extensions. It follows that $c^+$ is $E^+$-invariant whenever $c^\omega$ is $E^\omega$-invariant, and $c^+$ is a classification of $E^+$ whenever $c^\omega$ is a classification of $E^\omega$. In particular, $c^+$ is $E^+$-invariant in all extensions, and is a classification in any generic extension by a poset in $\mathcal{P}$.
\end{proof}

\begin{prop}\label{prop: Apdx =_R not ordinal classifiable}
Assume that there is some poset $\mathbb{P}\in\mathcal{P}$ such that $\mathbb{P}$ adds a new real and $\mathbb{P}\times\mathbb{P}$ is in $\mathcal{P}$. Then there is no $\mathcal{P}$-absolute classification of $=_\R$ with ordinal classifying invariants. 
\end{prop}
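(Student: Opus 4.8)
The plan is to derive a contradiction from the assumption that $=_\R$ admits a $\mathcal{P}$-absolute classification $c\colon \R \to \mathrm{Ord}$ with ordinal values, using the poset $\mathbb{P}$ that adds a new real together with its square $\mathbb{P}\times\mathbb{P}$. The intuition is that ordinals are too rigid: they are absolute between models and cannot be moved by automorphisms of the forcing, yet distinguishing two mutually generic reals requires exactly such movement. So the heart of the argument is to produce two reals that are genuinely distinct (hence must receive different invariants) but whose invariants are forced to be equal.

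First I would work in $V$ and fix a definition $\psi(r,\xi,b)$ witnessing the classification, with parameter $b$. Pass to a $\mathbb{P}\times\mathbb{P}$-generic extension $V[G_0\times G_1]$, yielding two reals $r_0, r_1$ which are mutually generic over $V$ (and in particular $r_0 \neq r_1$, indeed not even equal since each is new). Since $\mathbb{P}\times\mathbb{P}\in\mathcal{P}$, the map $c$ is a complete classification of $=_\R$ in $V[G_0\times G_1]$, so $c(r_0)\neq c(r_1)$; say $c(r_0)=\xi_0$ and $c(r_1)=\xi_1$ with $\xi_0\neq\xi_1$. The key step is then to exploit the \emph{homogeneity} of $\mathbb{P}\times\mathbb{P}$: by the natural automorphism of $\mathbb{P}\times\mathbb{P}$ swapping the two coordinates, the pair $(r_0,r_1)$ and $(r_1,r_0)$ are ``symmetric'' over $V$. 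Concretely, I would fix conditions in the generic forcing the values $c(\dot r_0)=\check\xi_0$ and $c(\dot r_1)=\check\xi_1$, then apply the coordinate-swap automorphism to transport these forcing statements. Because the invariants $\xi_0,\xi_1$ are ordinals — checked and hence fixed by the automorphism — the swap forces $c(\dot r_0)=\check\xi_1$ as well, giving $\xi_0=\xi_1$ by the functionality of $c$ (Invariance absoluteness, clause (3)(a)). This contradicts $\xi_0\neq\xi_1$.

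The step I expect to be the main obstacle is making the symmetry argument fully rigorous, since it is exactly the point where ``ordinal-valued'' is used essentially. One must ensure the automorphism of $\mathbb{P}\times\mathbb{P}$ interacts correctly with the \emph{parameter} $b$: if $b$ is an arbitrary set-theoretic parameter not fixed by the swap, the clean symmetry breaks. I would handle this either by absorbing $b$ into the ground model (so that $V$ already contains $b$ and the automorphism acts trivially on $\check b$), or by arranging $r_0,r_1$ to be mutually generic over $V$ containing $b$. The reason ordinals are crucial is that the invariant value, being an ordinal, is an element of $V$ and is pointwise fixed by \emph{any} automorphism; the argument would fail for invariants coded by new reals, precisely matching Example~\ref{example: gen class =R no ord inv}, where $=_\R$ is shown \emph{not} to be classifiable with ordinal invariants while it trivially is with real invariants. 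A secondary subtlety is confirming that a single $\mathbb{P}$ adding a new real, together with $\mathbb{P}\times\mathbb{P}\in\mathcal{P}$, suffices to realize the mutually generic pair inside one $\mathcal{P}$-extension, which is immediate from the hypothesis but should be stated.
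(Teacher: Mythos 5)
Your setup is fine as far as it goes: mutually generic reals $r_0,r_1$ added by $\mathbb{P}\times\mathbb{P}$ are distinct, and completeness in the product extension gives $c(r_0)\neq c(r_1)$. The gap is in the symmetry step that is supposed to produce the contradiction. If $(p_0,p_1)\Vdash c(\dot r_0)=\check\xi_0\wedge c(\dot r_1)=\check\xi_1$, then the coordinate-swap automorphism yields $(p_1,p_0)\Vdash c(\dot r_0)=\check\xi_1\wedge c(\dot r_1)=\check\xi_0$ --- a statement about the \emph{different} condition $(p_1,p_0)$, which in general is incompatible with $(p_0,p_1)$. So there is no single extension in which both $c(r_0)=\xi_0$ and $c(r_0)=\xi_1$ hold, and no contradiction follows. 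Worse, the conclusion you are aiming for ($\xi_0=\xi_1$ for the conditions you chose) is not merely unproven but false in general: if $p_0$ and $p_1$ force $\sigma$ into disjoint basic open sets, then they \emph{must} force distinct invariants, since otherwise two mutually generic reals extending $p_0$ and $p_1$ respectively would be distinct yet share an invariant, contradicting completeness in the product extension. No homogeneity of $\mathbb{P}\times\mathbb{P}$ can erase the asymmetry between $p_0$ and $p_1$; and the obstacle is not the parameter $b$ (which lies in $V$ and is fixed by the swap), but the conditions themselves.

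The repair is to arrange that the two coordinates carry the \emph{same} condition, and this is exactly where ordinal-valuedness enters; it is also the paper's proof. Since ordinals are absolute, for any $\mathbb{P}$-generic $G$ the value $c(\sigma[G])$ is an ordinal of $V$ (here one uses Locality together with clause (3)(a) to see that the value computed in $V[G]$ persists to larger models), so some condition $p\in\mathbb{P}$ and ordinal $\alpha$ satisfy $p\Vdash_{\mathbb{P}} c(\sigma)=\check\alpha$. Now force with $\mathbb{P}\times\mathbb{P}$ below $(p,p)$: both mutually generic reals $\sigma[G]$ and $\sigma[H]$ receive the invariant $\alpha$, so completeness in the product extension (this is where $\mathbb{P}\times\mathbb{P}\in\mathcal{P}$ is used) gives $\sigma[G]=\sigma[H]$, and Lemma~\ref{lem;mutgen} then places this common real in $V[G]\cap V[H]=V$, contradicting that $\mathbb{P}$ adds a new real. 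Note that this argument needs no automorphism at all: the whole point of ``ordinal invariants'' is that a single condition can pin the invariant down to a ground-model object, so the same condition used on both coordinates forces equal invariants for two necessarily distinct reals.
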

\begin{proof}
Assume for contradiction that $c$ is a $\mathcal{P}$-absolute classification taking ordinal values. Fix $\mathbb{P}$ as above and let $\sigma$ be a $\mathbb{P}$-name for a new real. There is some condition $p\in\mathbb{P}$ and an ordinal $\alpha$ such that $p\force_{\mathbb{P}} c(\sigma)=\check{\alpha}$. We claim that $p$ forces $\sigma$ to be in the ground model, contradicting our assumption. Indeed, given any generic filter $G$ extending $p$, let $H$ be $\mathbb{P}$-generic over $V[G]$ with $p\in H$. Then $c(\sigma[G])=\alpha=c(\sigma[H])$, and so $\sigma[G]=\sigma[H]$, as $c$ is a complete classification in the $\mathbb{P}\times\mathbb{P}$ extension $V[G][H]$. Finally, by Lemma~\ref{lem;mutgen}, $\sigma[G]\in V[G]\cap V[H]=V$.
\end{proof}
The hypothesis in the above proposition holds if $\mathcal{P}$ contains the poset $\P$ for adding a single Cohen real, since $\P\times \P$ and $\P$ are forcing equivalent. In particular, there is no generically absolute classification of $=_\R$ using ordinals.
A similar argument works for random real forcing.
\begin{prop}
Let $\P$ be the poset for adding a single random real in $[0,1]$, with respect to Lebesgue measure. Then there is no $\{\P\}$-absolute classification of $=_\R$ with ordinal classifying invariants. 
\end{prop}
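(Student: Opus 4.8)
The plan is to reduce the statement to Proposition~\ref{prop: Apdx =_R not ordinal classifiable}, whose hypothesis asks for a poset in $\mathcal{P}$ that adds a new real and whose square also lies in $\mathcal{P}$. Here $\mathcal{P}=\{\P\}$, which (by the closure remark at the start of this appendix) we may take to be closed under forcing equivalence. Thus it suffices to verify two things: that $\P$ adds a new real, and that $\P\times\P$ is forcing equivalent to $\P$. The first is immediate, since the random real is not a ground model real.

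The crux is the second point, which in the Cohen case was trivial but here requires a genuinely measure-theoretic input. Conditions in the product forcing $\P\times\P$ are pairs $(B_0,B_1)$ of positive-measure Borel subsets of $[0,1]$, and such rectangles form a dense subset of the measure algebra of the square $([0,1]^2,\lambda^2)$; hence $\P\times\P$ is forcing equivalent to random forcing on the square. Now I invoke the standard fact that $([0,1]^2,\lambda^2)$ and $([0,1],\lambda)$ are isomorphic as probability spaces (both being atomless standard probability spaces), so their measure algebras are isomorphic as Boolean algebras, and therefore as forcing notions. Consequently $\P\times\P$ is forcing equivalent to $\P$, and so $\P\times\P\in\mathcal{P}$. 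With both hypotheses in place, Proposition~\ref{prop: Apdx =_R not ordinal classifiable} yields the conclusion directly.

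Unwinding what this reduction produces: taking $\dot r$ a $\P$-name for a new random real and a condition $p$ with $p\force c(\dot r)=\check\alpha$ for some ordinal $\alpha$, one finds mutually random filters $G,H\ni p$ over $V$, whose joint extension $V[G][H]$ is a $\P\times\P$-generic extension and hence, by the above, a single random (up to forcing equivalence) extension in $\mathcal{P}$. Then $c(\dot r[G])=\alpha=c(\dot r[H])$, and since $c$ is a complete classification in $V[G][H]$ by clause (3)(b), we get $\dot r[G]=\dot r[H]$. By mutual genericity and Lemma~\ref{lem;mutgen}, $\dot r[G]\in V[G]\cap V[H]=V$, contradicting that $\dot r$ is forced to be new.

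I expect the only nontrivial step to be the forcing equivalence $\P\times\P\equiv\P$, and specifically the passage from the abstract product forcing to random forcing on the square together with the measure isomorphism of the square with the interval; everything else is a direct appeal to the preceding proposition and its proof.
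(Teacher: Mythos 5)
Your reduction hinges on the claim that $\P\times\P$ is forcing equivalent to random forcing on the square $[0,1]^2$, via the observation that rectangles $B_0\times B_1$ sit inside the measure algebra of the square. That claim is false, and this is exactly the point where the random case parts ways from the Cohen case. The map $(B_0,B_1)\mapsto B_0\times B_1$ does preserve order and incompatibility, but its range is \emph{not} dense: there are Borel sets $E\subseteq[0,1]^2$ of positive measure containing no rectangle $A\times B$ with $\lambda(A),\lambda(B)>0$, even modulo null sets (for instance $E=\set{(x,y)}{x+y \bmod 1\in G}$ where $G$ has positive measure but neither $G$ nor its complement contains an interval modulo null; a Steinhaus-type argument shows any positive rectangle inside $E$ mod null would force an interval into $G$ mod null). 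More decisively, no forcing equivalence between $\P\times\P$ and $\P$ can exist at all: it is a classical fact that the product of two random forcings adds a Cohen real. Concretely, if $(r_0,r_1)$ is $\P\times\P$-generic then $r_0\oplus r_1$ (sum mod $1$, or bitwise XOR in $2^\omega$) is Cohen over $V$: given a condition $(B_0,B_1)$ and a dense open $D$ in Cohen forcing, the convolution $\chi_{B_0}\ast\chi_{B_1}$ is continuous with positive integral, hence bounded below on some basic cylinder, and one can then refine $(B_0,B_1)$ to a pair of cylinder-intersections forcing an initial segment of the sum into $D$. Since a Cohen real is unbounded over $V$ while $\P$ (and the square measure algebra) is $\omega^\omega$-bounding, a $\P\times\P$-extension is never a $\P$-extension. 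So the hypothesis of Proposition~\ref{prop: Apdx =_R not ordinal classifiable} genuinely fails for $\mathcal{P}=\{\P\}$, even after closing under forcing equivalence and subforcings, and your reduction cannot be repaired.

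The paper's proof keeps the one correct ingredient of your plan---that the measure algebra $\P_2$ of $([0,1]^2,\lambda^2)$ is forcing equivalent to $\P$ by measure isomorphism---but works with $\P_2$ itself instead of the product. The two coordinate projections $\sigma_l,\sigma_r$ of the single $\P_2$-generic point are each $\P$-generic over $V$, they are distinct (the diagonal is null), and they live together in one random extension, so clause (3)(b) for $\{\P\}$ applies there; moreover $V[\sigma_l]\cap V[\sigma_r]=V$, which yields the contradiction as in the Cohen proof. Note that this intersection fact is \emph{not} an instance of Lemma~\ref{lem;mutgen}: $\sigma_r$ is random over $V[\sigma_l]$, i.e.\ generic for the random algebra as computed in $V[\sigma_l]$, which is not a poset of $V$, and the pair $(\sigma_l,\sigma_r)$ is not $\P\times\P$-generic (its sum is random over $V$, not Cohen). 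Your final paragraph conflates ``mutually product-generic'' with ``mutually random''; distinguishing these two notions, and supplying the intersection fact for the latter, is precisely the extra content the random case requires.
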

\begin{proof}
Let $\P_2$ be the poset to add a random real in $[0,1]^2$, with the product measure. Then $\P$ and $\P_2$ are forcing equivalent. Moreover, there are $\P_2$-names $\sigma_l,\sigma_r$ for reals in $[0,1]$ so that it is forced that each of $\sigma_l$ and $\sigma_r$ are $\P$-generics, and $V[\sigma_l]\cap V[\sigma_r]=V$. The rest of the argument follows as above.
\end{proof}

\begin{prop}\label{prop; Apdx E0 not P(ord) class}
Assume that $\mathcal{P}$ contains either Cohen forcing or Random real forcing on $2^\omega$ (with the usual product topology and product measure). Then there is no $\mathcal{P}$-absolute classification of $E_0$ with classifying invariants in $2^\alpha$, for any ordinal $\alpha$.
\end{prop}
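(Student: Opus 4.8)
The plan is to show that any putative invariant of a generic real must already lie in the ground model, and then to exhibit two $E_0$-inequivalent generic reals sharing it. Fix $\mathbb{P}$ to be Cohen (resp.\ random) forcing on $2^\omega$, which by hypothesis lies in $\mathcal{P}$, and suppose toward a contradiction that $c\colon 2^\omega\to 2^\alpha$, defined by some $\psi(x,A,b)$, is a $\mathcal{P}$-absolute classification of $E_0$. Let $\dot x$ be the canonical name for the generic real. For each $\xi<\alpha$ consider the statement $\phi_\xi\equiv$ ``$\xi\in c(\dot x)$''. By Invariance absoluteness (clause (3)(a)), in every extension $c$ is an $E_0$-invariant function, so each $\phi_\xi$ is invariant under finite modifications of the generic real: if $x\mathrel{E_0}x'$ then $\xi\in c(x)\iff\xi\in c(x')$.

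The crux is a $0$--$1$ law: for an $E_0$-invariant statement $\phi$ about the generic, the Boolean value $\|\phi\|_{\mathbb{P}}$ is $0$ or $1$. For Cohen forcing this is the topological $0$--$1$ law: if $s\force\phi$ and $t\force\neg\phi$ for conditions $s,t\in 2^{<\omega}$, extend to equal length $k$, pick a generic $x$ through $s$, and let $x'$ agree with $t$ below $k$ and with $x$ above $k$; then $x'$ is again Cohen-generic, passes through $t$, and $x'\mathrel{E_0}x$ with $V[x]=V[x']$, so $E_0$-invariance gives $\phi(x)\iff\phi(x')$, a contradiction. For random forcing this is ergodicity: the group of finite coordinate-flips acts in a measure-preserving, ergodic fashion on $(2^\omega,\mu)$, so every $E_0$-invariant measurable event — in particular each $\|\phi_\xi\|$, an element of the random algebra fixed by all flips — is null or co-null. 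Applying this to each $\phi_\xi$ yields $a_\xi\in\{0,1\}$ with $\|\phi_\xi\|=a_\xi$; setting $A=\set{\xi<\alpha}{a_\xi=1}\in V$ and assembling the conjunction over the ground-model index set $\alpha$, we obtain $\force c(\dot x)=\check A$. Thus every Cohen- (resp.\ random-) generic real over $V$ receives the same ground-model invariant $A$.

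To finish, work in a single $\mathcal{P}$-extension $V[G]$ and split $G$ into two mutually generic reals $x_0,x_1$, using that $\mathbb{P}$ is forcing equivalent to $\mathbb{P}\times\mathbb{P}$ (available by the closure assumption on $\mathcal{P}$). Each $x_i$ is $\mathbb{P}$-generic over $V$, so $c(x_i)=A$ holds in $V[x_i]$, and by Locality (clause (2)) it continues to hold in $V[G]$; hence $c(x_0)=c(x_1)=A$ there. On the other hand $x_0\not\mathrel{E_0}x_1$: otherwise $x_1$ would be a finite modification of $x_0$, whence $x_1\in V[x_0]\cap V[x_1]=V$ by Lemma~\ref{lem;mutgen}, contradicting the genericity of $x_1$. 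So in the $\mathcal{P}$-extension $V[G]$ the map $c$ assigns equal invariants to two $E_0$-inequivalent reals, contradicting clause (3)(b).

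The main obstacle is the $0$--$1$ law step and its use to pin the invariant down in $V$; once that is in place, the remainder is the mutual-genericity argument already used for $=_{\R}$ in Proposition~\ref{prop: Apdx =_R not ordinal classifiable}. Care is needed to apply the $0$--$1$ law to each coordinate $\xi$ separately and then reassemble $A$ over the ground-model index $\alpha$, which is legitimate precisely because each coordinate is decided outright.
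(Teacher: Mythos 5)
Your first half is correct and is essentially the paper's own argument in different clothing: the paper conjugates conditions by finite-flip automorphisms of $\P$, which fix the canonical name for $[\dot x]_{E_0}$, to conclude that every statement about the invariant with ground-model parameters is decided by the empty condition; your coordinatewise $0$--$1$ law (the topological argument for Cohen, ergodicity of the finite-flip group for random) is the same homogeneity mechanism, and it legitimately yields $A\in V$ with $\force c(\dot x)=\check A$. The Cohen case of your endgame is also fine, since Cohen forcing really is forcing equivalent to its square, so a single Cohen extension splits into two mutually generic Cohen reals.

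The gap is in the random case of the final step. Random real forcing $\P$ is \emph{not} forcing equivalent to the product $\P\times\P$: the product of two random algebras is not a measure algebra (product genericity of a pair is strictly stronger than the Fubini relationship ``each coordinate is random over the other's extension''; correspondingly, the rectangles are not dense in the plane measure algebra). Nor does the closure assumption on $\mathcal{P}$ help you: closure under subforcings and forcing equivalence does not put $\P\times\P$ into $\mathcal{P}$, precisely because $\P\times\P$ is neither equivalent to nor completely embeddable into $\P$. So for random forcing you cannot ``split $G$ into two product-generic reals'' inside a $\mathcal{P}$-extension as written. Note that the paper is careful about exactly this point in the random case of Proposition~\ref{prop: Apdx =_R not ordinal classifiable}: instead of $\P\times\P$ it uses the measure algebra $\P_2$ on the square (which \emph{is} forcing equivalent to $\P$, by Maharam's theorem) together with the two coordinate projections $\sigma_l,\sigma_r$, each random over $V$ and satisfying $V[\sigma_l]\cap V[\sigma_r]=V$. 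Your argument is repaired the same way: let $x$ be $\P$-generic and let $x_0,x_1$ be its even and odd parts; by Fubini each $x_i$ is random over $V$ and $x_1$ is random over $V[x_0]$, so $x_1\notin V[x_0]$, which already rules out $x_0\mathrel{E_0}x_1$ (a finite modification of $x_0$ lies in $V[x_0]$), while your $0$--$1$ law plus Locality gives $c(x_0)=c(x_1)=A$ in $V[x]$, contradicting clause (3)(b). Alternatively, one can avoid a second generic altogether and finish as the paper does: since $\force c(\dot x)=\check A$, any two generics lying in a common $\mathcal{P}$-extension are $E_0$-equivalent, so an $E_0$-class would have to be non-meager, respectively of positive outer measure, which is absurd since $E_0$-classes are countable.
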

\begin{proof}
Let $\P\in\mathcal{P}$ be either Cohen or Random real forcing, in $2^\omega$.
Fix an ordinal $\alpha$ and assume for a contradiction that $c\colon 2^\omega\to 2^\alpha$ is a $\mathcal{P}$-absolute complete classification of $E_0$. Let $b\in V$ be the parameter used to define $c$.
Let $x\in 2^\omega$ be $\P$-generic over $V$, and define $A=[x]_{E_0}$.

Both Cohen and Random forcing have the following property:
For any two conditions $p,q\in\P$, there is an automorphism $\pi$ of $\P$, swapping finitely many values of the generic $x\in 2^\omega$, such that $\pi(p)$ is compatible with $q$. In particular, $\pi(\dot{A})=\dot{A}$. It follows that for any formula $\phi$ and parameter $v\in V$, the truth value of $\phi^{V[x]}(A,v)$ is decided in $V$.

Let $y=c(x)$, $y\subset \alpha$. $y$ can be defined in $V[x]$ from $A=[x]_{E_0}$, as follows: for $\zeta<\alpha$, $\zeta\in y$ if and only if there is some $x'\in A$ for which $\zeta\in c(x')$.
The latter can be written as a formula $\phi^{V[x]}(\zeta,A,b)$. It follows that $y$ can be defined in $V$ as the set of $\zeta<\alpha$ for which it is forced that $\phi(\zeta,\dot{A},\check{b})$ holds, and so $y\in V$.

Fix a condition $p\in \P$ forcing that $c(\dot{x})=\check{y}$. Then for any two $\P$-generics $x_1,x_2$ which extend $p$, $x_1\mathrel{E_0}x_2$. We conclude that $E_0$ has a non-meager, or positive measure, equivalence class, a contradiction.
\end{proof}

For each countable ordinal $\alpha$ there is a natural Borel equivalence relation $\cong_\alpha$, with a natural complete classification using hereditarily countable sets in $\mathcal{P}^\alpha(\N)$, the $\alpha$'th-iterated powerset of $\N$ (see \cite{HKL_1998, FS89}). For example, one can take $\cong_0$ to be $=_\N$, and define $\cong_{n+1}$ to be $\cong_n^+$.
Following the approach in \cite[Introduction (E)]{HK96} and \cite{HKL_1998}, an equivalence relation is considered ``classifiable using hereditarily countable sets of rank $\alpha$'' if it is Borel reducible to $\cong_\alpha$.
As ${\cong_{\alpha+1}}\not\leq_B{\cong_\alpha}$, invariants of rank $\alpha+1$ are generally more complex than invariants of rank $\alpha$.

To see that these intuitions are preserved in the context of generically absolute classifications, we would want to show that $\cong_{\alpha+1}$ does not admit a generically absolute classification using hereditarily countable invariants in $\mathcal{P}^\alpha(\N)$.
This follows from the results in \cite{Sha18}.
In fact, similarly to Proposition~\ref{prop; Apdx E0 not P(ord) class} above, this is true without demanding the invariants to be hereditarily countable, and with $\N$ replaced by any ordinal.
\begin{thm}
Fix a countable ordinal $\alpha$, $\beta<\alpha$, any ordinal $\zeta$, and let $I=\mathcal{P}^\beta(\zeta)$. Then $\cong_\alpha$ does not admit a generically absolute classification with invariants in $I$.
\end{thm}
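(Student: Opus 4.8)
The plan is to reduce the statement to the symmetric-model analysis of \cite{Sha18} in three stages. First I would show that any candidate invariant must already live in the intersection model of the generic point. Concretely, suppose $c\colon X\to I$ is a generically absolute classification of $\cong_\alpha$, defined by a formula $\psi$ with parameter $b\in V$, and let $x$ be Cohen generic over $V$ with $A=c(x)\in I=\mathcal{P}^\beta(\zeta)$. For any $y$ with $y\cong_\alpha x$ in a further generic extension $W$ of $V[x]$, Invariance absoluteness gives $c^W(x)=c^W(y)$, Locality gives $c^W(x)=A$, and since $A'=c^{V[y]}(y)$ is defined by $\psi$ from $y,b\in V[y]$ (so $A'\in V[y]$) Locality again forces $c^W(y)=A'$, whence $A=A'\in V[y]$. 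Thus $A\in\bigcap\set{V[y]}{y\cong_\alpha x}=V[[x]]_{\cong_\alpha}$, exactly as in the proof of Theorem~\ref{thm;E_1-classification}(3). This reduction is robust: it uses nothing about $\mathcal{P}^\beta(\zeta)$ beyond the fact that its members are absolutely coded by ordinals, so that ``the same $A$'' makes sense across the various models $V[y]$.

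Second, I would identify $V[[x]]_{\cong_\alpha}$ with the iterated symmetric extension studied in \cite{Sha18}. For the canonical generic presentation of $\cong_\alpha$ (the $\alpha$-fold Friedman--Stanley jump of $=_\N$), \cite{Sha18} computes this model as an $\alpha$-step tower $V=N_0\subseteq N_1\subseteq\cdots\subseteq N_\alpha=V[[x]]_{\cong_\alpha}$ of symmetric extensions, each level adjoining a generic set of the previous level's objects. The content I would import is the structural dichotomy of \cite{Sha18}, the same circle of ideas underlying the cited \cite[Lemma 3.5]{Sha18}: an element of $N_\alpha$ of potential complexity $\beta<\alpha$ cannot determine the full $\cong_\alpha$-class of $x$. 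In the language of this paper, any $A\in\mathcal{P}^\beta(\zeta)\cap V[[x]]_{\cong_\alpha}$ is fixed by a group of automorphisms of the generic presentation large enough that it fails to separate $x$ from some non-equivalent generic.

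Third, I would convert this into the desired contradiction by a homogeneity argument in the spirit of Lemma~\ref{lem;alteration} and Proposition~\ref{prop; Apdx E0 not P(ord) class}: using the automorphisms fixing $A$, one produces $x'$ in a common Cohen extension with $A=c(x)=c(x')$ yet $x\not\cong_\alpha x'$, so $c$ fails to be a complete classification there, contradicting Generic absoluteness. I expect the main obstacle to lie in the second stage: transferring the ``potential complexity'' invariants of \cite{Sha18} into the present generic-classification framework, and verifying that the relevant automorphism group fixes every $A\in\mathcal{P}^\beta(\zeta)$ uniformly in $\zeta$ while still acting non-trivially on the $\cong_\alpha$-class. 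This is precisely where the hypothesis $\beta<\alpha$ is consumed, and where dropping both the hereditary-countability requirement and the restriction to an $\N$-base forces one to invoke the full strength of the symmetric-model computation rather than the elementary finite-swapping argument that sufficed for $E_0$ in Proposition~\ref{prop; Apdx E0 not P(ord) class}.
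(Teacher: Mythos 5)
Your proposal is correct in outline and follows essentially the same route as the paper: the paper's own proof is exactly a deferral to the symmetric-model analysis of \cite{Sha18}, noting that the invariant of a Cohen generic lands in the intersection model $V(A)$ computed there, and that $V(A)$ cannot be presented as $V(B)$ for any $B\in\mathcal{P}^\beta(\zeta)$ --- which is precisely the content you isolate as your second stage, with your first and third stages being the same wrapper arguments the paper uses for Theorem~\ref{thm;E_1-classification}(2),(3) and Proposition~\ref{prop; Apdx E0 not P(ord) class}. The only cosmetic difference is that you phrase the imported result via automorphisms fixing $A$, whereas the paper quotes it as the model-theoretic statement $V(A)\neq V(B)$ and finishes by the same homogeneity/mutual-genericity considerations.
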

For $\alpha<\omega$, this is proved in \cite[Section 4]{Sha18}, by finding a model of the form $V(A)$, where $A$ is an invariant for $\cong_{n+1}$, so that $V(A)$ cannot be presented as $V(B)$ for any set $B$ in $\mathcal{P}^n(\zeta)$, for any ordinal $\zeta$. The conclusion in \cite{Sha18} is that there is no absolute classification of $\cong_{n+1}$ using invariants in $\mathcal{P}^n(\zeta)$. The same proof shows that there is no such generically absolute classification, since $A$ is constructed as the $\cong_{n+1}$-invariant of a single Cohen-generic real.
The modifications for countable $\alpha\geq\omega$ are explained in \cite[Section 8]{Sha18}.

\bibliographystyle{alpha}
\bibliography{bibliography}

\end{document}